  \def\Acc{{\rm Acc}}
  \def\C{\mathbb{C}}
  \def\Chat{\widehat{\mathbb{C}}}
  \renewcommand*{\d}{\ensuremath{{\operatorname{dist}}}}
  \def\Dbar{{\overline{\mathbb{D}}}}
  \newcommand*{\e}{\ensuremath{{\operatorname{e}}}}
  \def\Ga{\Gamma}
  \newcommand*{\mapfromto}[3]{\hbox{\ensuremath{#1 : #2 \to #3}}}
  \def\om{\omega}
  \def\Om{\Omega}
  \def\La{\Lambda}  
  \def\RR{{\mathbb{R}}}
  \def\TT{{\mathbb{T}}}
  \def\whd{{\widehat{d}}}
  \def\whD{{\widehat{D}}}
  \def\ZZ{{\mathbb{Z}}}
  \def\XX{{\mathcal{X}}}
  \def\YY{{\mathcal{Y}}}
\newtheorem{theorem}{Theorem}[section]
\newtheorem{corollary}[theorem]{Corollary}
\newtheorem{lemma}[theorem]{Lemma}
\newtheorem{proposition}[theorem]{Proposition}
\theoremstyle{definition}
\newtheorem{definition}[theorem]{Definition}
\newtheorem{example}[theorem]{Example}
\theoremstyle{remark}
\newtheorem{remark}[theorem]{Remark}
\theoremstyle{corx}
\newtheorem*{corx}{Corollary}
\newtheorem{conjecture}{Conjecture}
\theoremstyle{thmx}
\newtheorem{thmx}{Theorem}
\newcommand{\ALIGN}{\begin{align*}}
\newcommand{\ENDALIGN}{\end{align*}}
\newcommand{\ENUM}{\begin{enumerate}}
\newcommand{\ENUMa}{\begin{enumerate}[a.]}
\newcommand{\ENUMA}{\begin{enumerate}[A.]}
\newcommand{\ENUMAF}{\begin{enumerate}[\bf A.]}
\newcommand{\ENUMi}{\begin{enumerate}[i)]}
\newcommand{\ENDENUM}{\end{enumerate}}
\newcommand{\ITMZ}{\begin{itemize}}
\newcommand{\ENDITMZ}{\end{itemize}}
\newcommand{\REFEQN}[1] { \begin{equation}\label{#1} }
\newcommand{\ENDEQN}{\end{equation}}
\newcommand{\THM}{\begin{theorem}}
\newcommand{\REFEXA}[1] { \begin{example}\label{#1} }
\newcommand{\ENDEXA}{\end{example}}
\newcommand{\MTX}{ \begin{matrix}}
\newcommand{\ENDMTX}{ \end{matrix}}
\newcommand{\REM}{ \begin{remark}}
\newcommand{\ENDREM}{\end{remark}}
\newcommand{\REFTHM}[1] { \begin{theorem}\label{#1} }
\newcommand{\RREFTHM}[2] { \begin{theorem}[#1]\label{#2} }
\newcommand{\ENDTHM}{\end{theorem}}
\newcommand{\REFNTH}[1] { \begin{newthm}\label{#1} }
\newcommand{\ENDNTH}{\end{newthm}}
\newcommand{\REFPROP}[1]{\begin{proposition}\label{#1} }
\newcommand{\RREFPROP}[2]{\begin{proposition}[#1]\label{#2} }
\newcommand{\PROP}{\begin{proposition}}
\newcommand{\ENDPROP}{\end{proposition} }
\newcommand{\REFDEF}[1]{\begin{definition}\label{#1} }
\newcommand{\DEF}{\begin{definition}}
\newcommand{\ENDDEF}{\end{definition} }
\newcommand{\REFLEM}[1]{\begin{lemma}\label{#1} }
\newcommand{\RREFLEM}[2]{\begin{lemma}[#1]\label{#2} }
\newcommand{\LEM}{\begin{lemma}}
\newcommand{\ENDLEM}{\end{lemma} }
\newcommand{\REFCOR}[1]{\begin{corollary}\label{#1} }
\newcommand{\COR}{\begin{corollary}}
\newcommand{\ENDCOR}{\end{corollary} }
\newcommand{\CONJ}{\begin{conjecture}}
\newcommand{\REFCONJ}[1]{\begin{conjecture}\label{#1}}
\newcommand{\RREFCONJ}[2]{\begin{conjecture}{#1}\label{#2}}
\newcommand{\ENDCONJ}{\end{conjecture} }
\newcommand{\REFDEFTHM}[1] { \begin{defthm}\label{#1} }
\newcommand{\ENDDEFTHM}{\end{defthm}}
\newcommand{\corref}[1]{Corollary~\ref{#1}}
\newcommand{\figref}[1]{Fig.~\ref{#1}}
\newcommand{\lemref}[1]{Lemma~\ref{#1}}
\newcommand{\remref}[1]{Remark~\ref{#1}}
\newcommand{\thmref}[1]{Theorem~\ref{#1}}
\newcommand{\PROOF}{\begin{proof}}
\newcommand{\ENDPROOF}{\end{proof}}
\numberwithin{equation}{section}
\newcommand{\vs}{\vspace{6pt}}
\newcommand{\bit}{\it \bfseries}
\newcommand{\es}{\emptyset}
\newcommand{\sm}{\smallsetminus}
\newcommand{\ov}{\overline}
\newcommand{\bd}{\partial}
\newcommand{\ve}{\varepsilon}
\newcommand{\modd}{\ (\operatorname{mod} 1)}
\def\CC{\mathbb{C}}
\def\DD{\mathbb{D}}
\begin{document}

\title[External Rays Under Renormalization]{On the correspondence of external rays under renormalization}

\author[C. L. Petersen and S. Zakeri]{Carsten Lunde Petersen and Saeed Zakeri}

\address{Department of Mathematics, Roskilde University, DK-4000 Roskilde, Denmark} 

\email{lunde@ruc.dk}

\address{Department of Mathematics, Queens College of CUNY, 65-30 Kissena Blvd., Queens, New York 11367, USA} 
\address{The Graduate Center of CUNY, 365 Fifth Ave., New York, NY 10016, USA}

\email{saeed.zakeri@qc.cuny.edu}

\date{February 28, 2019}

\maketitle

\begin{abstract}
Let $P$ be a monic polynomial of degree $D \geq 3$ whose filled Julia set $K_P$ has a non-degenerate periodic component $K$ of period $k \geq 1$ and renormalization degree $2 \leq d<D$. Let $I=I_K$ denote the set of angles $\theta$ on the circle $\TT=\RR/\ZZ$ for which the (smooth or broken) external ray $R^P_\theta$ for $P$ accumulates on $\bd K$. We prove the following: \vs

\begin{enumerate}[leftmargin=*]
\item[$\bullet$]
$I$ is a compact set of Hausdorff dimension $<1$ and there is an essentially unique degree $1$ monotone map $\Pi: I \to \TT$ which semiconjugates $\theta \mapsto D^k \theta \modd$ on $I$ to $\theta \mapsto d \theta \modd$ on $\TT$. \vs

\item[$\bullet$] 
Any hybrid conjugacy $\varphi$ between a renormalization of $P^{\circ k}$ 
on a neighborhood of $K$ and a monic degree $d$ polynomial $Q$ 
induces a semiconjugacy $\Pi: I \to \TT$ with the property that 
for every $\theta \in I$ the external ray $R^P_\theta$ has the same accumulation set as the curve $\varphi^{-1}(R^Q_{\Pi(\theta)})$. 
In particular, $R^P_\theta$ lands at $z \in \bd K$ if and only if $R^Q_{\Pi(\theta)}$ lands at $\varphi(z) \in \bd K_Q$. \vs

\item[$\bullet$] 
The ray correspondence established by the above result is finite-to-one. In fact, the cardinality of each fiber of $\Pi$ is $\leq D-d+2$, and the inequality is strict when the component $K$ has period $k=1$. \vs
\end{enumerate}

Using a new type of quasiconformal surgery we construct a class of examples with $k=1$ for which the upper bound $D-d+1$ is realized and the set $I$ has isolated points.     
\end{abstract}

\tableofcontents

\section{Introduction}\label{sec:intro}

This paper provides an understanding of the external rays that accumulate (and in particular land) on a non-degenerate periodic component of a disconnected polynomial Julia set. It can be viewed as a complement to the well-studied case of connected Julia sets initiated by Douady and Hubbard in the late 1980's. \vs

Here is a brief outline of the results; we will give the precise definitions in \S \ref{sec:prelim} and the proofs in \S \ref{sec:pfsa}-\S \ref{sec:ex}. Let $P: \CC \to \CC$ be a monic polynomial of degree $D \geq 3$ whose filled Julia set $K_P$ is disconnected. Let $K$ be a periodic component of $K_P$ of period $k \geq 1$ which is non-degenerate in the sense that it is not a single point. There are topological disks $U_1,U_0$ containing $K$ such that the restriction $P^{\circ k}|_{U_1}: U_1 \to U_0$ is a polynomial-like map of some degree $2 \leq d<D$ with connected filled Julia set $K$. This restriction is hybrid equivalent to a polynomial $Q$ of degree $d$, that is, there is a quasiconformal map $\varphi : U_0 \to \varphi(U_0)$ which satisfies $\varphi \circ P^{\circ k} = Q \circ \varphi$ in $U_1$ and has the property that $\bar{\bd} \varphi=0$ a.e. on $K$. It follows that $\varphi(K)$ is the filled Julia set $K_Q$. The main objective of this paper is to relate the external rays of $Q$ to the external rays of $P$ which accumulate on $\bd K$. Since $K_Q$ is connected, every external ray of $Q$ is a smooth curve. For $P$, however, we need to allow all external rays, including the ones that crash into the escaping precritical points of $P$. These generalized rays can be defined in terms of the gradient flow of the Green's function $G$ of $P$ in $\CC \sm K_P$. They consist of smooth field lines of $\nabla G$ that descend from $\infty$ and approach $K_P$, as well as their limits which are broken rays that abruptly turn when they crash into a critical point of $G$. For each $\theta \in \TT :=\RR/\ZZ$ we have either a smooth ray $R_\theta$ or a pair $R_\theta^\pm$ of broken rays which descend from $\infty$ at the angle $\theta$. Here $R_\theta^+$ (resp. $R_\theta^-$) makes a right (resp. left) turn at each critical point it crashes into (see \S \ref{sec:prelim} for details). Denoting by {\mapfromto {\whD, \whd} \TT \TT} the maps 
$$
\whD(\theta) = D \, \theta, \quad \whd(\theta) = d \, \theta \ \modd,
$$
we have $P(R_\theta)=R_{\whD(\theta)}$ and $P(R_\theta^\pm)=R_{\whD(\theta)}^\pm$ or $R_{\whD(\theta)}$.   

\begin{thmx}[External angles associated with $K$] \label{A}
The set $I=I_K \subset \TT$ of angles $\theta$ for which the smooth ray $R_\theta$ or one of the broken rays $R_\theta^\pm$ accumulates on $\bd K$ is compact, invariant under $\whD^{\circ k}$ and of Hausdorff dimension $\leq \log d \, / (k \log D)$. Moreover, there is a continuous degree $1$ monotone surjection {\mapfromto \Pi I \TT} which makes the following diagram commute:
\begin{equation}\label{scj}
\begin{tikzcd}[column sep=small]
I \arrow[d,swap,"\Pi"] \arrow[rr,"\whD^{\circ k}"] & & I \arrow[d,"\Pi"] \\
\TT \arrow[rr,"\whd"] & & \TT 
\end{tikzcd} 
\end{equation}
The semiconjugacy $\Pi$ is unique up to postcomposition with a rotation of the form $\tau \mapsto \tau + j/(d-1) \modd$.
\end{thmx}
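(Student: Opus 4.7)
The plan is to build $\Pi$ intrinsically from the external class of the polynomial-like restriction $f := P^{\circ k}|_{U_1}: U_1 \to U_0$, which has degree $d$ and connected filled Julia set $K$. By Douady--Hubbard, there is a conformal isomorphism $\psi: U_0 \sm K \to \{1 < |z| < R\}$ conjugating $f$ to $z \mapsto z^d$, unique up to multiplication by a $(d-1)$st root of unity. For each $\tau \in \TT$ the \emph{local ray} $r_\tau := \psi^{-1}\{s e^{2\pi i\tau}: 1<s<R\}$ is a smooth field line of the Green's function $G_f$ of $f$, and $f(r_\tau) = r_{\whd(\tau)}$. The central geometric claim I would establish is that for every $\theta \in I$, the ray $R^P_\theta$ (or one of $R_\theta^\pm$ when broken) has a tail lying in $U_1 \sm K$ whose accumulation set on $\bd K$ coincides with that of a \emph{unique} local ray $r_\tau$; setting $\Pi(\theta) := \tau$ then gives the candidate semiconjugacy.

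Continuity of $\Pi$ would follow from continuous dependence of both the external rays $R^P_\theta$ and the local rays $r_\tau$ on their angles, since nearby external angles produce nearby tails in the annulus $U_0 \sm K$ matched with nearby local angles. Monotonicity and the equality $\deg\Pi = 1$ come from the preservation of cyclic order: $R^P_\theta$ crossings of $\bd U_0$ and the local rays $r_\tau$ are both cyclically ordered by their angle, and as $\theta$ wraps once around $\TT$ through $I$ the matched $r_{\Pi(\theta)}$ must wrap once around $K$. The semiconjugacy relation $\Pi \circ \whD^{\circ k} = \whd \circ \Pi$ is then immediate from $P^{\circ k}(R^P_\theta) = R^P_{\whD^{\circ k}(\theta)}$ and $f(r_\tau) = r_{\whd(\tau)}$, together with uniqueness of the tail matching. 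Compactness of $I$ follows because ``$R^P_\theta$ meets every neighborhood of $\bd K$'' is a closed condition in $\theta$, while $\whD^{\circ k}$-invariance of $I$ is automatic from $P^{\circ k}(\bd K) = \bd K$.

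For the Hausdorff dimension bound I would use a self-covering argument driven by the renormalization degree: among the $D^k$ preimages of a small arc $J \subset \TT$ about a point of $I$ under $\whD^{\circ k}$, only the $d$ preimages corresponding to the local inverse branches of $f: U_1 \to U_0$ can meet $I$, since the remaining $D^k - d$ branches push preimage rays outside a fixed neighborhood of $\bd K$. Iterating $n$ times covers $I$ by at most $C d^n$ arcs of length $\le D^{-kn}|J|$, which gives $\dim_H(I) \le \log d / (k \log D) < 1$. Uniqueness of $\Pi$ modulo rotations $\tau \mapsto \tau + j/(d-1)$ then follows because the continuous monotone degree $1$ self-maps of $\TT$ commuting with $\whd$ consist precisely of these $d-1$ rotations; any two candidate semiconjugacies differ by such a map, by a standard argument applied to the extension of $\Pi_2 \circ \Pi_1^{-1}$ across the gaps of $I$.

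The main obstacle lies in justifying the tail matching itself. A priori a ray $R^P_\theta$ accumulating on $\bd K$ could enter and leave $U_0$ many times, or crash into an escaping critical point of $P$ lying in the annular shell $U_0 \sm \ov{U_1}$, so it is not obvious that its asymptotic behavior is captured by a single local ray $r_\tau$. To resolve this I would combine monotonicity of $G_P$ along its field lines (which forces $R^P_\theta$ to descend monotonically toward $\bd K$ once its potential is sufficiently small and it has entered $U_0$) with contraction of the inverse branches of $f$ in the hyperbolic metric of $U_0 \sm K$ (which squeezes the angular spread of successive pullbacks inside $U_1$). The broken-ray case requires propagating a consistent $\pm$ convention at each critical crash, and verifying that the two resulting broken tails select the same local angle.
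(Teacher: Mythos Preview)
Your overall route is the external-class approach the paper alludes to at the start of \S\ref{sec:bpi} but deliberately does not follow. The paper instead builds $\Pi$ combinatorially: it defines nested finite unions of arcs $I_n \supset I$ (the closures of the sets of angles whose field lines enter a shrinking family of equipotential neighborhoods $V_{s_n}$ of $K$), equips each $I_n$ with a piecewise-affine surjection $\Pi_n: I_n \to \TT$ normalized at a fixed point $\theta_0 \in I$ of $\whD^{\circ k}$, checks $\Pi_{n-1}\circ\whD^{\circ k}=\whd\circ\Pi_n$, and proves the $\Pi_n$ converge uniformly on $I$. The dimension bound then drops out of the count that $I_n$ has $\sim d^n$ components of length $\lesssim D^{-nk}$, and uniqueness is handled by extending both $\whD^{\circ k}|_I$ and $\Pi$ monotonically over the gaps and invoking the contraction $\Xi\mapsto(\Xi\circ F)/d$ on lifts. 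All the hyperbolic tail-matching you sketch is deferred to the proof of Theorem~\ref{B}, where it is carried out \emph{using} the already-constructed $\Pi$ as scaffolding. Your proposal thus amounts to proving Theorems~\ref{A} and~\ref{B} simultaneously; the paper's decoupling keeps Theorem~\ref{A} elementary and independent of any choice of straightening.

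There is, however, a genuine error in your setup. Douady--Hubbard does \emph{not} provide a conformal isomorphism $\psi: U_0\sm K\to\{1<|z|<R\}$ conjugating $f$ to $z\mapsto z^d$. The hybrid equivalence is only quasiconformal off $K$; equivalently, the Riemann map $\zeta:\CC\sm K\to\CC\sm\overline{\DD}$ conjugates $f$ near $\bd K$ to its external class $h$, a real-analytic expanding degree-$d$ circle map that is only quasisymmetrically (not analytically) conjugate to $\whd$. So either your local rays $r_\tau$ are $\zeta$-preimages of radial lines, in which case $f(r_\tau)=r_{h(\tau)}$ rather than $r_{\whd(\tau)}$ and your construction lands in $(\TT,h)$, requiring you to append the monotone semiconjugacy $h\to\whd$; or $\psi$ is merely quasiconformal, in which case the $r_\tau$ are not smooth field lines of any Green's function and your description of them is wrong. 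Either repair is workable but alters the subsequent steps. A related looseness: defining $\Pi(\theta)$ by demanding that $R^P_\theta$ and $r_\tau$ have the \emph{same accumulation set on $\bd K$} does not pin down $\tau$, since distinct local rays may co-land or share accumulation sets when $\bd K$ is not locally connected. The correct criterion is bounded hyperbolic distance in $U_0\sm K$ (cf.\ \corref{samefiber}); but the paper establishes that only after $\Pi$ is in hand, so you would need an independent hyperbolic-contraction argument here, and the sketch you give in your last paragraph is where the real work would lie.
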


Observe that the existence of the semiconjugacy $\Pi$ implies that $I$ is uncountable and in fact contains a Cantor set. It would be tempting to speculate that $I$ itself is a Cantor set, but in \S \ref{sec:ex} we construct examples for which $I$ has isolated points (compare \thmref{D} below). \vs   

Let us denote the accumulation set of a (smooth or broken) ray $R$ by $\Acc(R)$:
$$
\Acc(R) := \overline{R} \sm R 
$$

\begin{thmx}[Ray correspondence] \label{B}
For any hybrid conjugacy $\varphi : U_0 \to \varphi(U_0)$ between the restriction $P^{\circ k}|_{U_0}: U_1 \to U_0$ and a degree $d$ monic polynomial $Q$, there is a choice of the semiconjugacy {\mapfromto \Pi I \TT} of \thmref{A} such that 
$$ 
\varphi(\Acc(R^P_\theta)) = \Acc(R^Q_{\Pi(\theta)}) \qquad \text{whenever} \ \theta \in I.
$$
In particular, $R^P_\theta$ lands at $z \in \bd K$ if and only if $R^Q_{\Pi(\theta)}$ lands at $\varphi(z) \in \bd K_Q$.
\end{thmx}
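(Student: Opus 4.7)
The plan is to use the hybrid conjugacy $\varphi$ to pull back the external ray structure of $Q$ into $U_0$, producing a secondary foliation of a collar of $\bd K$, and then to match each ray of $P$ accumulating on $\bd K$ with a unique leaf of this foliation via a dynamical squeeze. Concretely, define the polynomial-like Green's function $G := G_Q \circ \varphi$ on $U_0$; it is continuous on $U_0$, positive and harmonic on $U_0 \setminus K$, vanishes on $K$, and satisfies $G \circ P^{\circ k} = d \cdot G$ on $U_1$. For each $\tau \in \TT$ set
\[
\gamma_\tau \ := \ \varphi^{-1}(R^Q_\tau \cap \varphi(U_0)),
\]
a smooth field line of $\nabla G$ in $U_0 \setminus K$. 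The family $\{\gamma_\tau\}_{\tau \in \TT}$ foliates a deleted collar of $\bd K$ and satisfies the dynamical relation $P^{\circ k}(\gamma_\tau \cap U_1) \subset \gamma_{\whd(\tau)}$.

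For $\theta \in I$, let $T = R^P_\theta \cap \{0 < G < \varepsilon\}$ for small $\varepsilon > 0$, a tail of the ray lying in the foliated collar. Lifting the angular parameter to the universal cover $\RR$ of $\TT$, let $J(s) \subset \RR$ be the interval of $\tau$-values traced by $\{\gamma_\tau\} \cap T \cap \{G \leq s\}$. The dynamical relations $P^{\circ k}(R^P_\theta \cap U_1) \subset R^P_{\whD^{\circ k}(\theta)}$ and $P^{\circ k}(\gamma_\tau \cap U_1) \subset \gamma_{\whd(\tau)}$ yield a multiplier-$d$ expansion comparing $J(s)$ at $R^P_\theta$ to the corresponding interval at $R^P_{\whD^{\circ k}(\theta)}$ and scale $ds$. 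Iterating $n$ times and combining with uniform width bounds as $\theta'$ ranges over the compact $\whD^{\circ k}$-invariant set $I$ supplied by Theorem A, the intervals $J(s)$ shrink geometrically to a single point $\Pi(\theta) \in \TT$ as $s \to 0$. The same squeeze forces $T$ and $\gamma_{\Pi(\theta)}$ into the same arbitrarily thin $\{\gamma_\tau\}$-wedge near $\bd K$, so $\Acc(R^P_\theta) = \Acc(\gamma_{\Pi(\theta)})$; applying $\varphi$ yields $\varphi(\Acc(R^P_\theta)) = \Acc(R^Q_{\Pi(\theta)})$.

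Monotonicity, continuity, and degree $1$ of $\Pi$ follow from the angular order of $\{\gamma_\tau\}$ and the continuous dependence of $T$ on $\theta$; the semiconjugacy identity $\Pi \circ \whD^{\circ k} = \whd \circ \Pi$ is immediate from the pullback dynamics. By the uniqueness clause of Theorem A, this $\Pi$ coincides with one of the $d-1$ admissible normalizations. Broken rays $R^\pm_\theta$ are handled by applying the argument to each smooth branch, with continuity of $\Pi$ reconciling branch choices. The principal obstacle is the squeeze itself: the foliations $\{R^P_\theta\}$ and $\{\gamma_\tau\}$ on $U_0 \setminus K$ are generically distinct, since $G_P|_{U_0 \setminus K}$ and $G$ are different positive harmonic functions (satisfying functional equations with different multipliers $D^k$ and $d$), so the asymptotic agreement near $\bd K$ cannot be read off from any static comparison of Green's functions and must be extracted from the dynamics of $P^{\circ k}|_{U_1}$. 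Additional care is required at escaping critical points of $P^{\circ k}$ inside $U_1$, which is where the broken rays originate and where inverse-branch choices must be handled consistently in the nested interval construction.
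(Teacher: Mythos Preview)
Your strategy---pull back the $Q$-ray foliation via $\varphi$ to get leaves $\gamma_\tau$, then track the $\tau$-coordinate along each $P$-ray and use the factor-$d$ expansion of $\whd$ to squeeze---is in the same spirit as the paper's, but the implementation differs. The paper composes $\varphi$ with $\frak{B}_Q$ to reduce to the power map $Q_d$, then builds a discrete ``skeleton'' first: for the finite sets $C_n\subset I$ of \lemref{Cn} it shows the correspondence is \emph{exact}, $\Phi(R^P_\theta\cap\Omega_n)=L_{\Pi(\theta)}\cap\Omega'_n$, and then traps a general $\theta\in I$ between adjacent angles of $C_n$ inside a single connected component of $Q_d^{-n}(\Delta')$. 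The contraction comes from the Schwarz lemma: these components have uniformly bounded hyperbolic diameter in $\Omega'_0$. This bypasses any direct angular-interval bookkeeping and never needs to control the whole ray tail at once.

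The main gap in your argument is the ``uniform width bound'' you invoke. Your interval $J_\theta(s)$ is defined over the \emph{entire} tail $R^P_\theta\cap\{0<G\le s\}$, which accumulates on $\bd K$. Your iteration step $d\,|J_\theta(s)|\le |J_{\whD^{\circ k}(\theta)}(ds)|$ is correct, but to conclude $|J_\theta(s_0/d^n)|\le M/d^n$ you need $|J_{\theta'}(s_0)|\le M$ uniformly for $\theta'\in I$---and this is not supplied by \thmref{A}, nor by compactness of $I$ alone, because $|J_{\theta'}(s_0)|$ already encodes the full asymptotic behavior of $R^P_{\theta'}$ near $\bd K$. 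Boundedness of $J_{\theta'}(s_0)$ is essentially the conclusion you want (that the ray does not spiral), so assuming it is circular. The fix is to work with the angular extent of $\Phi(R^P_\theta\cap A_n)$ over a \emph{single} fundamental annulus $A_n$ (cut out by $G_P$-levels, not $G$-levels): this is a compact arc, its extent at $n=0$ is uniformly bounded over $\theta'\in I$ by continuity and compactness of $I_0$, and the dynamics then gives the $d^{-n}$ contraction cleanly. A second, smaller gap: ``same arbitrarily thin $\gamma_\tau$-wedge'' does not by itself give equal accumulation sets, since thin angular sectors need not have small Euclidean diameter near $\bd K_Q$. What you actually obtain is $|J(s)|\le Cs$, which translates to \emph{bounded hyperbolic distance} between $\Phi(R^P_\theta(s))$ and the point on $L_{\Pi(\theta)}$ at the same $G$-level; it is this hyperbolic bound (as the paper makes explicit) that forces the Euclidean distance, and hence the accumulation sets, to agree.
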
 

Here $R^P_\theta$ and $R^Q_\theta$ denote the external rays at angle $\theta$ for $P$ and $Q$, respectively (in the case of a broken ray for $P$, precisely one of the two possible rays at angle $\theta \in I$ accumulates on $\bd K$ and $R^P_\theta$ denotes that choice). \vs

Note that for each $\tau \in \TT$ the preimage $\varphi^{-1}(R_\tau^Q)$ is an arc in $\CC \sm K$ that accumulates on $\bd K$ but possibly meets infinitely many components of $K_P$ along the way. The main ingredient of the proof of \thmref{B} is to show that for every $\theta \in \Pi^{-1}(\tau)$, the ray segment $R_\theta^P \cap U_1$ and the arc $\varphi^{-1}(R_\tau^Q) \cap U_1$ stay at a bounded distance in the hyperbolic metric of $U_0 \sm K$. \vs

To illustrate the content of \thmref{B}, consider a cubic polynomial of the form $P(z)=\e^{2\pi i \theta} z + b z^2 +z^3$, where $b \in \CC$ and the rotation number $\theta$ has a continued fraction $[a_1,a_2,a_3,\ldots]$ that satisfies $\log a_n=O(\sqrt{n})$ as $n \to \infty$. For large enough $|b|$ the filled Julia set $K_P$ is disconnected but has a quadratic-like restriction hybrid equivalent to $Q(z)=\e^{2\pi i \theta} z + z^2$. It follows that the component $K$ of $K_P$ containing the fixed point $0$ is quasiconformally homeomorphic to $K_Q$. According to \cite{PZ1}, the filled Julia set $K_Q$ is locally connected. Moreover, every $w \in \bd K_Q$ is the landing point of one or two rays according as the forward orbit of $w$ misses or hits the critical point $-\e^{2\pi i \theta}/2$ of $Q$. The semiconjugacy $\Pi$ of \thmref{A} is at most $2$-to-$1$ in this case (see \thmref{C} below), so \thmref{B} implies that every point of $\bd K$ is the landing point of at least one and at most four rays for $P$. \figref{zoo} shows neighborhoods of the respective critical points in $K$ and $K_Q$ for the golden mean case $\theta=(\sqrt{5}-1)/2=[ 1,1,1,\ldots ]$. Looking at the figure on the right, it is far from obvious that the critical point at the center is accessible through an arc that avoids the uncountably many components of $K_P$. \vs

\begin{figure}[t!]
  \centering
  \includegraphics[width=0.4\textwidth]{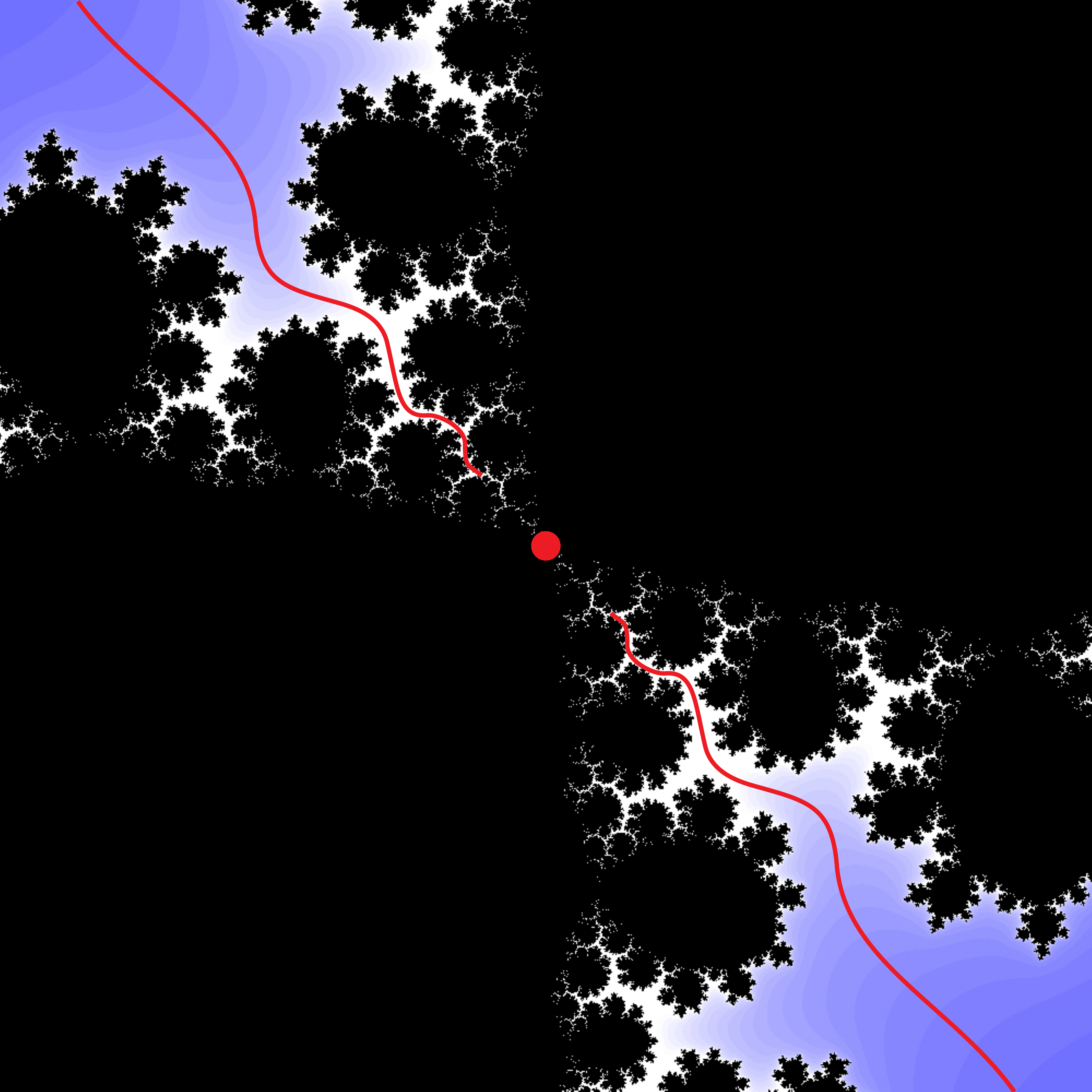}
  \hspace{1cm}
  \includegraphics[width=0.4\textwidth]{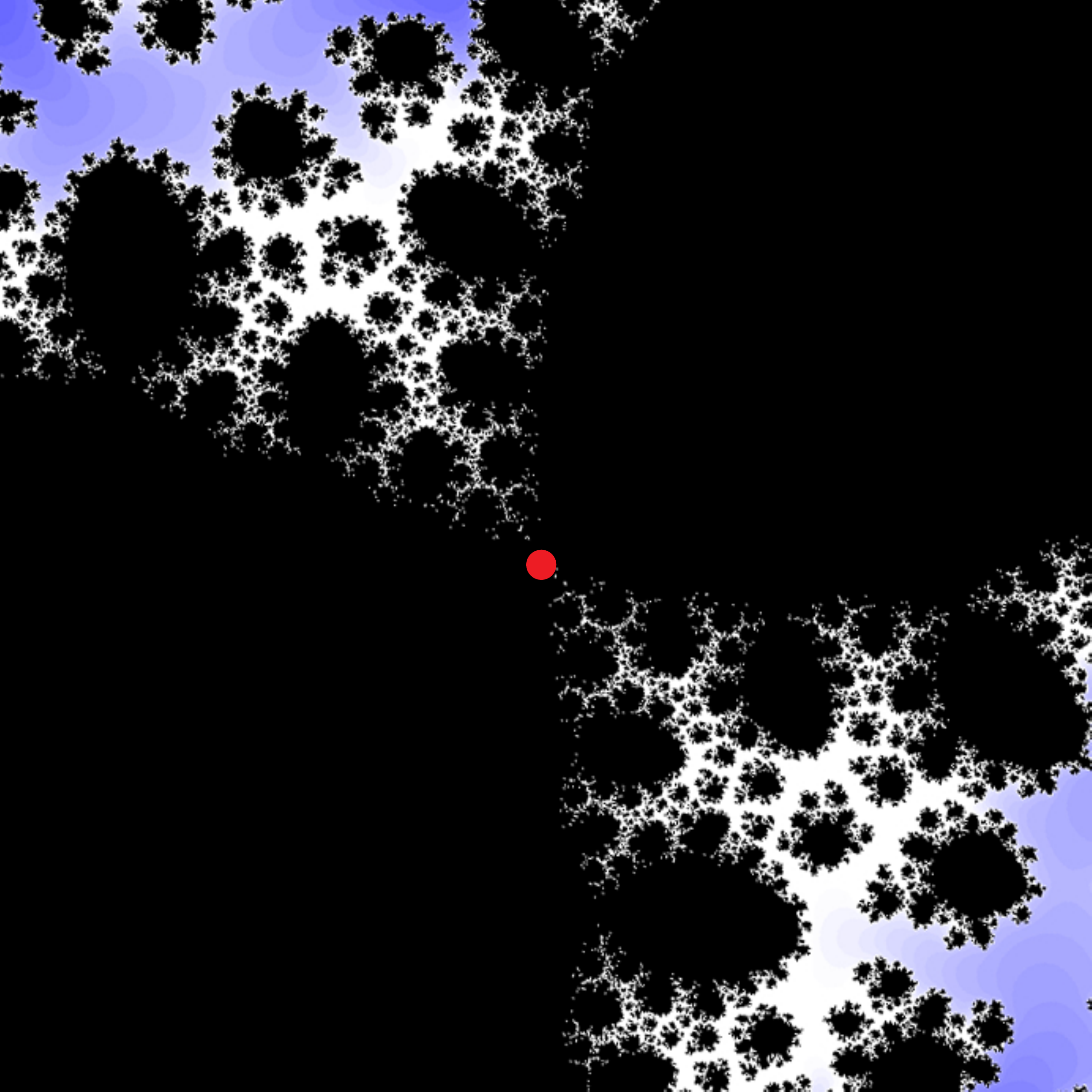}
  \caption{\sl{Left: The filled Julia set of the quadratic polynomial $Q(z)= \e^{2\pi i \theta} z + z^2$, with $\theta=(\sqrt{5}-1)/2$. Right: The disconnected filled Julia set of some cubic polynomial $P(z)= \e^{2\pi i \theta} z + b z^2 +z^3$ with a quadratic-like restriction hybrid equivalent to $Q$. Both pictures are magnified near the critical point at the center.}} 
  \label{zoo}   
\end{figure}

The following corollary of \thmref{B} is worth mentioning:

\begin{corx}
If a non-degenerate component $K$ of a polynomial filled Julia set is locally connected, every point of its boundary $\bd K$ is the landing point of at least one ray, and therefore is accessible through $\CC \sm K$.   
\end{corx}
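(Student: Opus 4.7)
The plan is to combine Theorem B with the classical Carath\'eodory landing theorem, transferring local connectivity from $K$ to the filled Julia set $K_Q$ of the straightened polynomial and then pulling landing rays back via the semiconjugacy $\Pi$.

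I first treat the case in which $K$ is periodic. Fix any hybrid conjugacy $\varphi:U_0\to\varphi(U_0)$ between the polynomial-like restriction $P^{\circ k}|_{U_1}:U_1\to U_0$ and a monic polynomial $Q$ of degree $d$, as in the hypothesis of Theorem B. Since $\varphi|_K$ is a homeomorphism onto $K_Q$, local connectivity of $K$ passes to $K_Q$. By Carath\'eodory's theorem, applied in the \Bottcher\ coordinate at $\infty$ for $Q$, every $w\in\bd K_Q$ is the landing point of some external ray $R^Q_\tau$ of $Q$.

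Now given $z\in\bd K$, set $w=\varphi(z)\in\bd K_Q$ and choose $\tau\in\TT$ with $R^Q_\tau$ landing at $w$. By Theorem A the semiconjugacy $\Pi:I\to\TT$ is a degree $1$ monotone surjection, so some $\theta\in I$ satisfies $\Pi(\theta)=\tau$. The landing equivalence in Theorem B then gives that $R^P_\theta$ lands at $z$. Since every (smooth or broken) external ray of $P$ is contained in $\CC\sm K_P\subseteq\CC\sm K$, the arc $R^P_\theta$ realizes $z$ as an accessible boundary point of $K$.

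For the strictly preperiodic case, let $n\geq 1$ be minimal with $K':=P^{\circ n}(K)$ periodic. Since $P^{\circ n}|_K:K\to K'$ is a continuous surjection of compact Hausdorff spaces, $K'$ is locally connected, so the previous paragraphs apply to $K'$. A ray of $P$ landing at $P^{\circ n}(z)\in\bd K'$ then lifts under the branched cover $P^{\circ n}:\CC\sm K_P\to\CC\sm K_P$ (using the right/left turning conventions at critical preimages that define the broken rays in \S\ref{sec:prelim}) to a ray of $P$ landing at $z$. I do not anticipate any serious obstacle: once Carath\'eodory is transferred from $K_Q$ to $K$, the ray correspondence in Theorem B does essentially all of the remaining work.
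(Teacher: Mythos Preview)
Your argument is correct and matches the paper's intended approach: the corollary is stated as an immediate consequence of \thmref{B} together with Carath\'eodory's theorem, with the reduction to the periodic case handled via eventual periodicity of non-degenerate components. Your periodic case is exactly right; for the preperiodic case you implicitly use that such a minimal $n$ exists, which is the result of \cite{QY} cited in the paper, and your ray-lifting sketch (pull back a landing ray germ through the branched cover $P^{\circ n}$, then extend upward in potential using the right/left conventions) is a valid way to make that step precise.
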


Here the assumption of $K$ being periodic is not needed since 
every non-degenerate component of a polynomial filled Julia set is known to be eventually periodic \cite{QY}. \vs

The analog of the above corollary in the connected case is well known and in fact non-dynamical: Every boundary point of a locally connected full continuum is accessible, hence by the theorem of Lindel\"{o}f \cite{Po} it is the landing point of at least one hyperbolic geodesic in the complement descending from $\infty$. But the disconnected case asserted by the above corollary is certainly dynamical, as it fails for general compact sets (think of the union of the closed unit disk together with the semicircles $\{ (1+1/n)\e^{2\pi it}: |t|\leq 1/4 \}$ for $n \geq 1$, where every point of the right half of $\bd \DD$ is inaccessible from the complement of this union). \vs

In \S \ref{sec:val} we prove

\begin{thmx}[Valence of $\Pi$] \label{C}
The semiconjugacy $\Pi: I \to \TT$ of \thmref{A} satisfies 
$$
\sup_{\tau \in \TT} \ \# \Pi^{-1}(\tau) \leq D-d+2.
$$
The inequality is strict if the component $K$ has period $k=1$.  
\end{thmx}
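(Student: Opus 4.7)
Suppose $\Pi^{-1}(\tau) = \{\theta_1 < \cdots < \theta_n\}$ in cyclic order; the aim is to show $n \leq D-d+2$ in general and $n \leq D-d+1$ when $k=1$. By \thmref{B} the rays $R^P_{\theta_i}$ share a common accumulation set $A \subset \bd K$ matching that of $\varphi^{-1}(R^Q_\tau)$. I would begin by choosing a level $t > 0$ small enough that the component $E_t^K$ of $G_P^{-1}(t)$ containing $K$ in its bounded face is a Jordan curve inside $U_0$, enclosing $K$ and no other component of $K_P$. Each ray $R^P_{\theta_i}$ crosses $E_t^K$ transversely at a unique point $p_i$, with the $p_i$'s cyclically ordered on $E_t^K$ compatibly with the $\theta_i$'s; outside $E_t^K$ the rays become pairwise disjoint arcs going to $\infty$ at distinct angles. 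Together they partition the one-point compactification of $\CC \sm \mathrm{Int}(E_t^K)$ into $n$ closed Jordan sectors $S_1, \ldots, S_n$.

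The heart of the argument is a critical-point count in these sectors. I would argue that each sector, being bounded by two rays originating from distinct angles at $\infty$ but whose inner endpoints on $E_t^K$ are linked by the common accumulation behavior near $A$, must either contain a saddle of $G_P$ (i.e.\ an iterated $P$-preimage of an escaping critical point of $P$ that separates the two bounding rays) or be accounted for by ``topological slack'' intrinsic to the configuration. A Riemann--Hurwitz computation applied to the polynomial-like restriction $P^{\circ k}|_{U_1}: U_1 \to U_0$ of degree $d$ shows that the cycle $\bigcup_{j=0}^{k-1} P^j(K)$ absorbs $d-1$ units of critical multiplicity of $P^{\circ k}$. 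Tracking how the remaining critical orbits of $P^{\circ k}$ external to the cycle project down to $P$-critical orbits of total multiplicity at most $D-d$ (after accounting for the cyclic iteration), one obtains the inequality $n - c \leq D-d$ for a constant $c \geq 1$ equal to the number of sectors that are ``free'' of external critical content.

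The distinction between $k=1$ and $k \geq 2$ is where the delicate bookkeeping occurs. When $k=1$ the cycle is trivial and only one sector is free, namely the one whose angular span at $\infty$ covers the complement of the fiber's collapsing arc $\bar\Pi^{-1}(\tau) \subset \TT$; so $c=1$ and $n \leq D-d+1$. When $k \geq 2$ the non-trivial cycle $K, P(K), \ldots, P^{k-1}(K)$ permits an additional free sector whose separation is enforced by the cyclic monodromy rather than by an external critical orbit, giving $c = 2$ and $n \leq D-d+2$. The main obstacle is the precise justification of this sector-to-critical-orbit matching and the exact identification of $c$; rigorously, this requires pulling back the sector decomposition by $P^{\circ k}$ and carrying out a careful count of critical points in the resulting branched cover of $\mathrm{Int}(E_t^K)$, which is considerably cleaner when $k=1$ because no cyclic pullback tracking is required. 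The sharpness of the $k=1$ bound will then be confirmed by the surgery construction in \S\ref{sec:ex}.
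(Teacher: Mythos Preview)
Your sector-counting intuition is right in spirit, but the proposal has a genuine gap: the assertion that each of the $n$ sectors (minus $c$ ``free'' ones) contains a saddle of $G_P$ attributable to a distinct escaping critical point of $P$ is not justified and is in fact false as stated. The $n-1$ intervals $]\theta_i,\theta_{i+1}[$ between consecutive fiber elements are indeed gaps of $I$, and each corresponds to a ray pair crashing into some critical point of $G$. But that critical point is only an \emph{iterated preimage} of an escaping critical point of $P$, and distinct gaps can share the same underlying critical point of $P$ once you push forward. There are $D^k-d$ major gaps of $I$ counting multiplicities (\corref{gapcount}), not $D-d$, and the minor gaps among your $n-1$ carry multiplicity zero, so a direct level-set count gives no bound on $n$ at all. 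The paper circumvents this by first pushing the fiber forward under $\whD^{\circ k}$ until it becomes periodic (via the maximal Cantor set $C \subset I$ and its gap dynamics), then running a Kiwi-style orbit-portrait argument at the resulting periodic point $z_0 \in \bd K$: the sectors are based at the points of the $P$-orbit $\mathcal{O}$ of $z_0$, not at an equipotential, and the count is of critical \emph{values} attached to the shortest sector in each ghost-sector cycle (\lemref{NaN}). The preperiodic excess is then controlled separately by \lemref{prepcount}, using \lemref{partners} to show that any two angles in the fiber with the same $\whD^{\circ k}$-image must form a ray pair through an escaping critical point of $P^{\circ k}$.

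Your explanation of the $k=1$ versus $k\geq 2$ dichotomy is also not the actual mechanism. The extra unit when $k\geq 2$ does not come from ``cyclic monodromy'' creating a free sector; it comes from the fact that a ghost sector based at $z_i \in \bd K^i$ may contain a point of $\mathcal{O}$ lying on a \emph{different} component $K^j$ of the cycle, and hence fail to be minimal. When $k=1$ there is only one component, so every ghost sector is automatically minimal and the bound in \lemref{NaN} tightens by one. Your Riemann--Hurwitz remark that $P^{\circ k}$ has $d-1$ critical points in $K$ is correct but not what drives the count; what matters is the split $N_1+N_2 \leq D-d$ between critical values attached to minimal ghost sectors and escaping critical values that are not.
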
 

The proof has two (somewhat related) ingredients: One is the dynamics of the ``gaps'' of $I=I_K$ and its maximal Cantor subset as degree $d$ invariant sets for $\whD^{\circ k}$. The other is a bound on the cardinality of the fibers of $\Pi$ whose angles are eventually periodic under $\whD^{\circ k}$. This is related to the problem of bounding the number of cycles of smooth rays that land on a periodic point, studied in degree $2$ by Milnor \cite{M2} and extended to higher degrees by Kiwi \cite{K} in their work on ``orbit portraits'' of polynomial maps. The argument in our case is a bit more subtle, as we have to deal with periodic rays that are infinitely broken, or smooth periodic rays that pull back to pairs of broken rays. \vs    

In \S \ref{sec:ex} we present a general method for constructing examples where $K$ has period $1$ and $\Pi$ has the top valence $D-d+1$ predicted by \thmref{C}. Consider the $D-1$ fixed points 
$$
\theta_i := \frac{i}{D-1}  \modd
$$
of the map $\whD$, taking the subscript $i$ modulo $D-1$. Using the technique of quasiconformal surgery we prove the following 

\begin{thmx}[Top valence and isolated rays]\label{D}
Given integers $2 \leq d<D$, a degree $d$ polynomial $Q$ with connected filled Julia set and a fixed point $\theta_j$ of $\whD$, there is a polynomial $P$ of degree $D$ whose filled Julia set has a component $K=P(K)$ such that \vs
\begin{enumerate}
\item[(i)]
$P$ restricted to a neighborhood of $K$ is hybrid equivalent to $Q$. \vs
\item[(ii)]
The $D-d+1$ consecutive fixed points $\theta_j, \ldots, \theta_{j+D-d}$ belong to the same fiber of the semiconjugacy $\Pi: I_K \to \TT$. \vs  
\end{enumerate}
The corresponding rays $R^P_{\theta_j}, \ldots, R^P_{\theta_{j+D-d}}$ co-land at a fixed point of $P$ on $\bd K$
\end{thmx}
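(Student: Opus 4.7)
The plan is to construct $P$ by quasiconformal surgery: I will build a degree $D$ quasi-regular branched covering $F:\Chat\to\Chat$ coinciding with $Q$ near $K_Q$ and with $z\mapsto z^D$ near infinity, and realizing the prescribed ray-landing pattern combinatorially; then the measurable Riemann mapping theorem will straighten $F$ to the desired polynomial $P$. Let $G_Q$ be the Green's function of $Q$, fix a level $L>0$, and set $V_0=\{G_Q<L\}$ and $V_1=Q^{-1}(V_0)=\{G_Q<L/d\}\Subset V_0$, so $Q|_{V_1}:V_1\to V_0$ is polynomial-like of degree $d$. By the rotational freedom in $\Pi$ (Theorem~A), I may assume the common image angle is $\tau_*=0$, and let $\alpha\in\bd K_Q$ be the landing point of $R^Q_0$. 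Fix a larger disk $W$ with $\overline{V_0}\Subset W$ and define $F$ by $F=Q$ on $V_1$, $F(z)=z^D$ on $\Chat\setminus W$, and on the shell $A:=W\setminus\overline{V_1}$ by a quasi-regular interpolation. Globally $F$ is a degree $D$ branched cover of $\Chat$, and by Riemann--Hurwitz there must be exactly $D-d$ critical multiplicities in $A$ (in addition to the $d-1$ from $Q$ in $V_1$ and the $D-1$ at $\infty$).

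The combinatorial heart of the construction is a \emph{fan} on $A$. The outer boundary $\bd W$ meets the $D-d+1$ fixed radial rays $r_i$ of $z^D$ at angles $\theta_{j+i}$ ($i=0,\dots,D-d$) in distinct points $p_i$, while the inner boundary $\bd V_1$ meets $R^Q_0$ in a single point $q$. Choose $D-d+1$ Jordan arcs $\gamma_i\subset A$ from $p_i$ to $q$, pairwise disjoint except at $q$, partitioning $A$ into $D-d+1$ sectors. Design $F|_A$ quasi-conformally so that each sector is mapped onto a standard wedge in the target and all $D-d$ critical multiplicities are concentrated along the fan --- for instance, at a single branch point $q^*\in A$ of local degree $D-d+1$ on the common arc near $q$, or at $D-d$ simple branch points distributed along the $\gamma_i$'s. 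The combinatorial effect is that each radial ray $r_i$, once it enters $A$, follows the arc $\gamma_i$ to $q$ and continues inside $V_1$ along $R^Q_0$ to land at $\alpha$.

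Next I apply the standard QC surgery machinery. Define $\mu=0$ on the holomorphic region $V_1\cup(\Chat\setminus W)$ and extend $\mu$ to $A$ as the forward-invariant pullback of the standard structure by iterates of $F$. With the geometry chosen so that $F|_A$ is sufficiently expanding (by taking $W$ much larger than $V_0$), every $F$-orbit visits $A$ in a uniformly bounded number of steps, so $\mu$ is $F$-invariant with $\|\mu\|_\infty<1$. The measurable Riemann mapping theorem yields a QC homeomorphism $\Phi:\Chat\to\Chat$ with $\Phi^*\mu_0=\mu$, normalized so that $\Phi(\infty)=\infty$ and $\Phi(z)/z\to 1$ at $\infty$; thus $P:=\Phi\circ F\circ\Phi^{-1}$ is a monic polynomial of degree $D$. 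Since $\mu=0$ outside $W$, $\Phi$ is conformal there and intertwines the Böttcher coordinates of $z^D$ and $P$ at $\infty$, preserving external ray angles. Let $K:=\Phi(K_Q)$, a $P$-fixed connected component of $K_P$. Then $\Phi^{-1}|_{\Phi(V_0)}$ is a hybrid conjugacy between $P|_{\Phi(V_1)}$ and $Q|_{V_1}$ (it is QC and $\bar{\bd}\Phi^{-1}=0$ a.e.\ on $K$ because $\mu=0$ on $K_Q$), establishing (i). The external ray $R^P_{\theta_{j+i}}$ is the $\Phi$-image of the arc $r_i\cup\gamma_i\cup(R^Q_0\cap V_1)$ in the model, so it lands at $\Phi(\alpha)\in\bd K$, a fixed point of $P$. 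Since this holds for every $i=0,\dots,D-d$, all $D-d+1$ rays co-land at $\Phi(\alpha)$; applying Theorem~B with the hybrid conjugacy $\varphi=\Phi^{-1}$ then yields $\Pi(\theta_j)=\cdots=\Pi(\theta_{j+D-d})=0$, proving (ii) and the final co-landing assertion.

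The main obstacle will be the fan construction on $A$: producing a quasi-regular branched covering with the prescribed combinatorial identification of $D-d+1$ rays with $R^Q_0$, exactly $D-d$ critical multiplicities, bounded dilatation, and correctly matched boundary data (degree $d$ on $\bd V_1$ and degree $D$ on $\bd W$). This requires an explicit local model near the branch point (essentially $z\mapsto z^{D-d+1}$ in suitable coordinates) together with a careful QC isotopy to smoothly globalize the interpolation. Once the fan is built, the remainder is standard Douady--Hubbard QC surgery, and the chosen combinatorics forces the desired ray-landing pattern.
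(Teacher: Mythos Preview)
Your overall surgery strategy is close in spirit to the paper's, but there is a genuine gap at the key step. You assert that the external ray $R^P_{\theta_{j+i}}$ is the $\Phi$-image of the model arc $r_i \cup \gamma_i \cup (R^Q_0 \cap V_1)$ and hence lands at $\Phi(\alpha)$. But external rays are gradient trajectories of the Green's function $G_P$, and you only know that $\Phi$ is conformal outside $W$; inside $\Phi(A)$ the map $\Phi$ is merely quasiconformal, so the field lines of $\nabla G_P$ have no reason to follow the $\Phi$-images of your hand-drawn arcs $\gamma_i$. Even if you arrange the curve $r_i\cup\gamma_i\cup R^Q_0$ to be $F$-invariant, its $\Phi$-image is just some $P$-invariant arc agreeing with $R^P_{\theta_{j+i}}$ near $\infty$ --- not a priori the external ray itself, nor a curve with the same accumulation set on $\bd K$. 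Since the co-landing of these $D-d+1$ rays is precisely the content of the theorem, this step cannot be waved through.

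The paper avoids this difficulty by building the model from the B\"ottcher side rather than from $Q$: it cuts and re-glues the sphere along radial slits at the angles $\theta_i$ and $\theta'_i=\theta_i\pm 1/D$, so that $z\mapsto z^D$ induces a \emph{holomorphic} branched cover on the resulting surface down to an explicit critical potential. After straightening, the B\"ottcher coordinate of $P$ is therefore known exactly on a large domain, and one reads off directly that the field lines $R_{\theta_i}$ and $R_{\theta'_i}$ crash into a prescribed simple escaping critical point $\omega_i$. Co-landing is then deduced by a separate combinatorial ``wake'' argument (\S\ref{descex}): this crashing pattern forces $]\theta_i,\theta_{i+1}[\,\cap\, I_K=\emptyset$ for each $j\le i\le j+D-d-1$, so the fixed angles $\theta_j,\dots,\theta_{j+D-d}$ lie in a single fiber of $\Pi$, and one concludes via Theorems~\ref{B} and~\ref{perr}. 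Two smaller issues in your sketch: the fan covering on $A$ (degree $d$ on one boundary circle, degree $D$ on the other, with prescribed branch locus and $F$-invariant arcs $\gamma_i$) is only asserted and needs an explicit model; and the claim that orbits visit $A$ boundedly often ``by taking $W$ large'' is not justified --- points of $V_0\setminus V_1\subset A$ can return to $A$ under your interpolation, whereas the paper's construction is arranged so that the non-holomorphic annuli are traversed at most once.
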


Compare Figs. \ref{seh} and \ref{chah}. When $D>d+1 \geq 3$, it follows that the $D-d-1$ angles $\theta_{j+1}, \ldots, \theta_{j+D-d-1}$ are isolated points of $I=I_K$. The set $I$ can have isolated points even when $D=3$ but that would require a higher period $k$ by \thmref{C} (see Example \ref{cubicex} and compare Figures \ref{ghost1} and \ref{ghost2}).   

\section{Preliminaries}\label{sec:prelim}

We assume the reader is familiar with the basic notions of complex dynamics, as in \cite{M1}. For convenience, and to establish our notations, we quickly recall some definitions. \vs 

\noindent
{\it Convention.} For distinct points $a,b \in \TT$ we use the notation $]a,b[$ for the open interval in $\TT$ traversed counterclockwise from $a$ to $b$. We define $[a,b[, ]a,b], [a,b]$ by adding the suitable endpoints to $]a,b[$.  

\subsection{Green and B\"{o}ttcher}\label{g&b}
 
Let $P:\CC \to \CC$ be a monic polynomial map of degree $D \geq 2$. The {\bit filled Julia set} $K_P$ is the union of all bounded orbits of $P$:
$$
K_P= \{ z \in \CC : \{ P^{\circ n}(z) \}_{n \geq 0} \ \text{is bounded} \}. 
$$
It is a compact non-empty subset of the plane with connected complement $\CC \sm K_P$. This complement can be described as the {\bit basin of infinity} of $P$, that is, the set of all points which escape to $\infty$ under the iterations of $P$. The {\bit Green's function}\index{Green's function} of $P$ is the continuous subharmonic function $G=G_P: \CC \to [0,+\infty[$ defined by 
$$
G(z)=\lim_{n \to \infty} \frac{1}{D^n} \log^+ |P^{\circ n}(z)|
$$
which describes the escape rate of $z$ to $\infty$ under the iterations of $P$. Here $\log^+ t = \max \{ \log t, 0 \}$. It satisfies the relation
$$
G(P(z))=D \, G(z) \qquad \text{for all} \ z \in \CC,
$$
with $G(z)=0$ if and only if $z \in K_P$. We often refer to $G(z)$ as the {\bit potential} of $z$. The Green's function is harmonic in $\CC \sm K_P$ and has critical points precisely at the escaping precritical points of $P$, that is, $\nabla G(z)=0$ for some $z \in \CC \sm K_P$ if and only if $P^{\circ n}(z)$ is a critical point of $P$ for some $n \geq 0$. It is easy to see that for every $s>0$ there are at most finitely many critical points of $G$ at potentials higher than $s$. The open set $G^{-1}([0,s[)$ has finitely many connected components, all being Jordan domains with piecewise analytic boundaries. \vs

There is a unique conformal isomorphism $\frak{B}=\frak{B}_P$, defined in some neighborhood of $\infty$, which is tangent to the identity at $\infty$ (in the sense that $\lim_{z \to \infty} \frak{B}(z)/z = 1$) and conjugates $P$ to the power map $w \mapsto w^D$: 
\begin{equation}\label{bfe}
\frak{B}(P(z))=(\frak{B}(z))^D \qquad \text{for large} \ |z|.  
\end{equation}
We call $\frak{B}$ the {\bit B\"{o}ttcher coordinate} of $P$ near $\infty$. The modulus of $\frak{B}$ is related to the Green's function by the relation 
$$
\log |\frak{B}(z)|=G(z) \qquad \text{for large} \ |z|. 
$$
Set
\begin{align*}
s_{\max} & := \max \big\{ G(c): c \ \text{is a critical point of} \ P \big\}, \\
W_0 & :=G^{-1}(]s_{\max},+\infty[). 
\end{align*}
It is not hard to see that $\frak{B}$ extends to a conformal isomorphism $W_0 \to \{ w : |w| > \e^{s_{\max}} \}$ which still satisfies the conjugacy relation \eqref{bfe} for $z \in W_0$. If $K_P$ is connected, every critical point of $P$ belongs to $K_P$, so $s_{\max}=0$. In this case $W_0=\CC \sm K_P$ and $\frak{B}$ is a conformal isomorphism $\CC \sm K_P \to \CC \sm \Dbar$. If $K_P$ is disconnected, there is at least one critical point of $P$ in $\CC \sm K_P$, so $s_{\max}>0$. In this case $W_0$ is a domain bounded by the piecewise analytic equipotential curve $G=s_{\max}$ containing the fastest escaping critical point(s) of $P$.

\subsection{Generalized rays}\label{gr}

For $\theta \in \TT$, we denote by $R_\theta$ the maximally extended smooth field line of $\nabla G$ such that $\frak{B}(W_0 \cap R_\theta)$ is the radial line $\{ \e^{s+2\pi i \theta}: s>s_{\max} \}$. We can parametrize $R_\theta$ by the potential, so for each $\theta$ there is an $s_\theta \geq 0$ such that $G(R_\theta(s)) = s$ for all $s > s_\theta$.\footnote{This amounts to viewing $R_\theta$ as a trajectory of the vector field $\nabla G / \| \nabla G \|^2$.} The field line $R_\theta$ either extends all the way to the Julia set $\bd K_P$ in which case $s_\theta=0$, or it crashes into a critical point $\om$ of $G$ at potential $s_\theta>0$ in the sense that $\lim_{s \to s_{\theta}^+} R_\theta(s)=\om$. \vs

A critical point $\om$ of $G$ of order $n$ is the starting point of $n$ ascending and $n$ descending field lines for $\nabla G$ that alternate around $\om$ (see \figref{cp} for the case $n=3$). Thus, at most $n$ field lines of the form $R_\theta$ can crash into $\omega$.  

\begin{lemma}\label{sts}
\mbox{}
\begin{enumerate}
\item[(i)]
The function $\theta \mapsto s_\theta$ is upper semicontinuous on $\TT$. \vs
\item[(ii)]
For every $s>0$, the set $\{ \theta \in \TT: s_\theta>s \}$ is finite. \vs 

\item[(iii)] 
For every $\theta \in \TT$, 
$$
s_{\whD(\theta)} \leq D s_\theta.
$$
Equality holds if $R_\theta$ does not crash into a critical point of $P$.
\end{enumerate}
\end{lemma}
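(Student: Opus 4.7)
For part (i), I would argue by continuous dependence of the flow of $\nabla G / \|\nabla G\|^2$ on initial conditions. Fix $\theta_0 \in \TT$ and $\delta > 0$; the segment of $R_{\theta_0}$ at potentials in $[s_{\theta_0} + \delta, s_{\max} + 1]$ is a compact smooth arc avoiding the critical set of $G$, so the normalized gradient field is smooth on an open neighborhood of it. The starting point $R_\theta(s_{\max} + 1) = \frak{B}^{-1}(\e^{s_{\max}+1+2\pi i \theta})$ depends continuously on $\theta$, so for $\theta$ sufficiently close to $\theta_0$ the integral curve $R_\theta$ shadows $R_{\theta_0}$ down to potential $s_{\theta_0} + \delta$ without crashing; hence $s_\theta \leq s_{\theta_0} + \delta$. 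Letting $\delta \to 0$ gives upper semicontinuity.

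For part (ii), any $\theta$ with $s_\theta > s$ terminates a smooth field line of $\nabla G$ descending from infinity to a critical point of $G$ at potential greater than $s$. By the finiteness observation from the previous subsection, only finitely many such critical points exist. Near any such $\omega$ the Green's function locally factors as $G - G(\omega) = \operatorname{Re}(\psi)$ for some holomorphic $\psi$ with a zero of finite order $k_\omega \geq 2$ at $\omega$ (built from a local branch of $\log \frak{B}$ pulled back by an iterate of $P$ which pushes $\omega$ into the region where $\frak{B}$ is defined). The trajectories of $\nabla G$ are then the level sets of $\operatorname{Im} \psi$, from which exactly $k_\omega$ smooth ascending field lines emanate. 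Each angle $\theta$ for which $R_\theta$ crashes at $\omega$ is determined by one such ascending line extending smoothly to infinity, so at most $k_\omega$ angles crash at $\omega$; summing over the finitely many relevant $\omega$ yields a finite total.

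For part (iii), the functional equation $G \circ P = D \, G$ gives $d(G \circ P) = D \, dG$, so $P$ maps flow lines of $\nabla G$ to flow lines of $\nabla G$, sending a smooth field line asymptotic at infinity to angle $\theta$ onto one asymptotic to $\whD(\theta)$ while multiplying potentials by $D$. Therefore $P(R_\theta \cap \{G > s_\theta\})$ is an initial segment of $R_{\whD(\theta)}$ that descends smoothly down to potential $D s_\theta$, so $s_{\whD(\theta)} \leq D s_\theta$. For equality, suppose $R_\theta$ does not crash at a critical point of $P$. If $R_\theta$ does not crash at all then $s_\theta = 0$ and, since critical points of $P$ in $\CC \sm K_P$ are themselves critical points of $G$ (which $R_\theta$ avoids by definition), $P$ is a local diffeomorphism along $R_\theta$, so $P(R_\theta)$ is a smooth arc descending all the way to $K_P$ and $s_{\whD(\theta)} = 0$. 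If instead $R_\theta$ crashes at $\omega$ which is not itself a critical point of $P$, then $\omega$ is a strict iterated preimage of a critical point of $P$, so $P(\omega)$ remains a critical point of $G$; since $P$ is a local diffeomorphism near $\omega$, the image $P(R_\theta)$ crashes precisely at $P(\omega)$, giving $s_{\whD(\theta)} = G(P(\omega)) = D s_\theta$.

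The main technical step is the local holomorphic normal form for $G$ near a critical point of $G$ used in (ii); once the trajectories of $\nabla G$ there are identified with level sets of a harmonic function, parts (i) and (iii) reduce to the standard continuous-dependence principle for ODEs together with the functional equation $G \circ P = D \, G$.
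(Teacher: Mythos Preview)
Your proof is correct and follows essentially the same approach as the paper's: continuous dependence on initial conditions for (i), finiteness of critical points of $G$ above a given potential together with the finite local valence at each for (ii), and the relation $P(R_\theta(s))=R_{\whD(\theta)}(Ds)$ for (iii) with the same two-case analysis for equality. The only cosmetic difference is that the paper records the fact that a critical point of $G$ of order $n$ has exactly $n$ ascending field lines \emph{before} the lemma and simply quotes it in (ii), whereas you re-derive it via the local holomorphic normal form; this is a matter of exposition, not substance.
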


\begin{proof}
(i) is the statement that if $s_{\theta_0}<s$ for some $\theta_0$, then $s_\theta<s$ for all $\theta$ close to $\theta_0$. This is a simple consequence of the fact that the trajectories of smooth vector fields depend continuously on their initial point. \vs

(ii) follows from the fact that for every $s>0$ there are finitely many critical points of $G$ at potentials higher than $s$, and each of them can have only finitely many field lines crashed into it. \vs

For (iii), first note that the image $P(R_\theta)$ is a smooth field line which by \eqref{bfe} maps under $\frak{B}$ to the radial line at angle $\whD(\theta)$. Thus 
\begin{equation}\label{kuku}
P(R_\theta(s))=R_{\whD(\theta)}(Ds) \qquad \text{for all} \ s>s_\theta.
\end{equation}
This proves $s_{\whD(\theta)} \leq D s_\theta$. Now suppose $R_\theta$ does not crash into a critical point of $P$. Then there are two possibilities: (i) $R_\theta$ does not crash at all, so $s_\theta=0$. In this case \eqref{kuku} shows that $R_{\whD(\theta)}$ does not crash either and $s_{\whD(\theta)}=0$; (ii) $R_\theta$ crashes into a strictly precritical point $\omega$ of $P$. In this case \eqref{kuku} shows that $R_{\whD(\theta)}$ crashes into $P(\omega)$ which is a critical point of $G$ at potential $D s_\theta$, proving once again $s_{\whD(\theta)}=D s_\theta$. 
\end{proof}

\begin{figure}[t]
\centering
\begin{overpic}[width=0.5\textwidth]{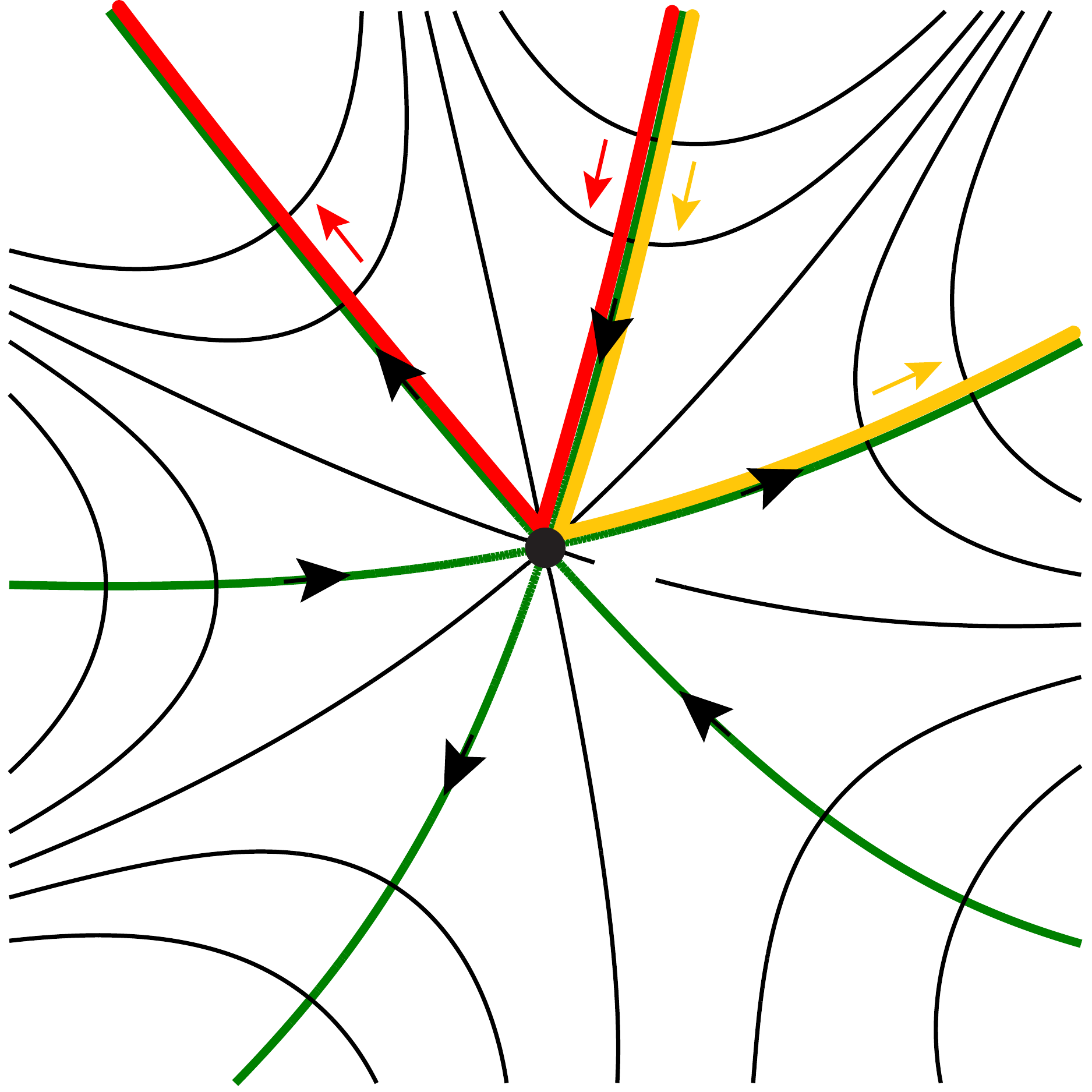}
\put (55.5,46.5) {$\omega$}
\put (64.5,93) {\small $R^-_{\theta}$}
\put (51,93.5) {\small $R^+_{\theta}$}
\end{overpic}
\caption{\sl Field lines and equipotentials of the Green's function near a critical point $\omega$ of order $3$. Each of the three incoming field lines can be extended past $\omega$ by turning to the immediate right or left and continuing along the corresponding outgoing line field.}  
\label{cp}
\end{figure}

It follows from the upper semicontinuity of $\theta \mapsto s_\theta$ that the set 
$$
\Sigma : = \bigcup_{\theta \in \TT} \bigcup_{s \in ]s_\theta,+\infty[} \{  \e^{s+2\pi i \theta} \}
$$
is open. It is not hard to see that the extension of the B\"{o}ttcher coordinate defined by $\frak{B}(R_\theta(s)):=\e^{s+2\pi i \theta}$ gives a conformal isomorphism $\frak{B}: W \to \Sigma$, where 
\begin{equation}\label{omgdef}
W : = \bigcup_{\theta \in \TT} \bigcup_{s \in ]s_\theta,+\infty[} \{ R_\theta(s) \}
\end{equation} 
Observe that $W$, being homeomorphic to the star-shaped domain $\Sigma$, is simply connected.  

\begin{corollary}
The set ${\mathcal N} \subset \TT$ of angles $\theta \in \TT$ for which $s_\theta>0$ is countable, dense and backward-invariant under $\whD$.
\end{corollary}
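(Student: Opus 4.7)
The plan is to verify the three properties in sequence by unpacking \lemref{sts}, with the only subtle step being nonemptiness.

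\textbf{Countability.} This is immediate from \lemref{sts}(ii): for each $n \geq 1$ the set $\{\theta \in \TT : s_\theta > 1/n\}$ is finite, so
\[
\mathcal{N} = \bigcup_{n \geq 1} \{\theta \in \TT : s_\theta > 1/n\}
\]
is a countable union of finite sets.

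\textbf{Backward invariance.} This will follow from part (iii) of the lemma, used contrapositively. If $\theta \notin \mathcal{N}$, then $s_\theta = 0$, so $R_\theta$ does not crash at all and in particular does not crash into a critical point of $P$; the equality clause of (iii) then gives $s_{\whD(\theta)} = D \cdot 0 = 0$, i.e.\ $\whD(\theta) \notin \mathcal{N}$. Hence $\whD^{-1}(\mathcal{N}) \subset \mathcal{N}$.

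\textbf{Density.} Here I first establish that $\mathcal{N}$ is nonempty, using the paper's standing assumption that $K_P$ is disconnected, which by \S \ref{g&b} yields $s_{\max} > 0$. Pick a critical point $\omega$ of $P$ lying in $\CC \sm K_P$; then $\omega$ is a critical point of $G$ at potential $G(\omega) > 0$, and has at least one incoming field line of $\nabla G$ descending into it. Tracing this line upward, it can encounter only finitely many critical points of $G$ at higher potential (again by \lemref{sts}(ii) applied to the finiteness of critical points above any fixed level), and so it eventually enters $W_0$ and is $\frak{B}$-conjugate to some radial segment at an angle $\theta \in \TT$. The maximal downward extension $R_\theta$ must then crash at a critical point of $G$ of positive potential, giving $s_\theta > 0$, so $\theta \in \mathcal{N}$. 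Once $\mathcal{N} \neq \es$, density in $\TT$ is automatic from backward invariance: for any $\theta_0 \in \mathcal{N}$, the set
\[
\bigcup_{n \geq 0} \whD^{-n}(\theta_0) = \bigcup_{n \geq 0} \Bigl\{ \frac{\theta_0 + k}{D^n} \modd : 0 \leq k < D^n \Bigr\}
\]
is contained in $\mathcal{N}$ and forms a $1/D^n$-net in $\TT$ for each $n$, so it is dense.

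The only delicate point is nonemptiness: it requires both that a critical point of $G$ exists in the basin (ensured by $K_P$ disconnected) and that an incoming field line at such a point can be propagated upward past the finitely many intervening critical potentials into $W_0$ to identify an external angle. Everything else is a direct application of the lemma.
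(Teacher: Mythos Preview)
Your proof is correct and follows essentially the same approach as the paper's one-sentence sketch, which appeals to the countability of critical points of $G$ and the implication $s_{\whD(\theta)}>0 \Rightarrow s_\theta>0$ from \lemref{sts}. You give more detail, particularly in spelling out the density argument (nonemptiness from the disconnectedness of $K_P$, then backward invariance under $\whD$), which the paper leaves implicit; one minor simplification of your nonemptiness step would be to take $\omega$ at the \emph{maximal} potential $s_{\max}$, so that its ascending field lines lie entirely in $W_0$ and no upward tracing through intermediate critical points is needed.
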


This follows from the fact that there are countably many critical points of $G$ in $\CC \sm K_P$, and that $s_\theta>0$ whenever $s_{\whD(\theta)}>0$ by \lemref{sts}. \vs

Here is a closely related description of the set $\mathcal N$:

\begin{corollary}\label{N0N}
Let ${\mathcal N}_0$ be the finite set of angles $\theta \in \TT$ for which the field line $R_\theta$ crashes into a critical point of $P$ in $\CC \sm K_P$. Then 
$$
{\mathcal N} = \bigcup_{n \geq 0} \whD^{-n}({\mathcal N}_0).
$$ 
\end{corollary}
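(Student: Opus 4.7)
The plan is to deduce both inclusions directly from \lemref{sts}(iii), with no new ingredients needed.

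For the inclusion $\bigcup_{n \geq 0} \whD^{-n}({\mathcal N}_0) \subseteq {\mathcal N}$, I would first observe that ${\mathcal N}$ is backward invariant under $\whD$: if $s_{\whD(\eta)} > 0$, then the inequality $s_{\whD(\eta)} \leq D\, s_\eta$ of \lemref{sts}(iii) forces $s_\eta > 0$, so $\eta \in {\mathcal N}$. Combined with the trivial ${\mathcal N}_0 \subseteq {\mathcal N}$, induction on $n$ gives $\whD^{-n}({\mathcal N}_0) \subseteq {\mathcal N}$ for every $n \geq 0$.

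For the reverse inclusion, take $\theta \in {\mathcal N}$, so $R_\theta$ crashes at some terminal potential $s_\theta > 0$ into a critical point $\omega$ of $G$ in $\CC \sm K_P$. As recalled in \S\ref{g&b}, such an $\omega$ is precisely an escaping precritical point of $P$, so there is a smallest integer $m = m(\omega) \geq 0$ with $P^{\circ m}(\omega)$ a critical point of $P$. I would then prove $\whD^{\circ m}(\theta) \in {\mathcal N}_0$ by induction on $m$. The base case $m = 0$ says that $\omega$ itself is a critical point of $P$, which is the very definition of $\theta \in {\mathcal N}_0$. For $m \geq 1$, the point $\omega$ is strictly precritical and in particular not a critical point of $P$, so the equality case of \lemref{sts}(iii) (subcase (ii) in its proof) applies and gives that $R_{\whD(\theta)}$ crashes into $P(\omega)$, whose order of precriticality is $m-1$. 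The induction hypothesis applied to $\whD(\theta)$ then yields $\whD^{\circ m}(\theta) \in {\mathcal N}_0$, whence $\theta \in \whD^{-m}({\mathcal N}_0)$.

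There is really no significant obstacle here: the corollary is essentially a bookkeeping consequence of the two halves of \lemref{sts}(iii). The only mild subtlety worth spelling out is the distinction between a critical point of $G$ and a critical point of $P$ — the former includes all escaping precritical points of $P$, and the induction simply drives $\omega$ forward by $P$ until these two notions coincide, which must occur in finitely many steps because $\omega$ escapes to $\infty$ and $P$ has only finitely many critical points.
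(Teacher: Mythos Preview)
Your proof is correct and follows essentially the same approach as the paper: both argue the easy inclusion via backward invariance of ${\mathcal N}$ from the inequality in \lemref{sts}(iii), and for the reverse inclusion both take the smallest $m$ with $P^{\circ m}(\omega)$ a critical point of $P$ and use the equality case of \lemref{sts}(iii) to push the crash point forward until it hits a critical point of $P$. The only cosmetic difference is that the paper packages the induction as the single formula $P^{\circ n}(R_\theta(s))=R_{\whD^{\circ n}(\theta)}(D^n s)$ for $s>s_\theta$, whereas you step through one iterate at a time.
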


\begin{proof}
The union is a subset of $\mathcal N$ since ${\mathcal N}_0 \subset {\mathcal N}$ and ${\mathcal N}$ is backward-invariant under $\whD$. Conversely, suppose $\theta \in {\mathcal N}$ so $R_\theta$ crashes into a critical point $\omega$ of $G$ in $\CC \sm K_P$. Let $n \geq 0$ be the smallest integer for which $c:=P^{\circ n}(\omega)$ is a critical point of $P$. An easy induction using \eqref{kuku} and the inequality $s_{\whD(\theta)} \leq D s_\theta$ gives 
$$
P^{\circ n}(R_\theta(s))=R_{\whD^{\circ n}(\theta)}(D^n s) \qquad \text{for} \ s>s_\theta.
$$
This implies that the field line $R_{\whD^{\circ n}(\theta)}$ crashes into $c$, which shows $\whD^{\circ n}(\theta) \in {\mathcal N}_0$. 
\end{proof}

When $\theta \notin {\mathcal N}$, the field line $R_\theta$ is called the {\bit smooth ray} at angle $\theta$. When $\theta \in {\mathcal N}$, the field line $R_\theta$ is defined only for $s > s_\theta$ and there is more than one way to extend it to a curve consisting of field lines and singularities of $\nabla G$ on which $G$ defines a homeomorphism onto $]0,+\infty[$. But there are always two special choices: $R_\theta^+$ which turns immediate right and $R_\theta^-$ which turns immediate left at each critical point met during the descent (compare \figref{cp}). We call these extensions the {\bit right} and {\bit left broken rays} at angle $\theta$, respectively. Note that these broken rays are also parametrized by the potential, so $R^{\pm}_{\theta}(s)$ make sense for all $s>0$, and
$$
R^+_{\theta}(s)=R^-_{\theta}(s)=R_\theta(s) \qquad \text{for} \ s > s_\theta. 
$$
On the other hand, an easy exercise shows that for each potential $s<s_\theta$ the points $R^{\pm}_\theta(s)$ belong to different connected components of $G^{-1}([0,s_\theta[)$, so the restrictions of the curves $s \mapsto R^\pm_\theta(s)$ to $]0,s_\theta[$ are disjoint. \vs
 
Each compact piece of a broken ray is the one-sided uniform limit of the corresponding piece of the nearby smooth rays in the following sense: Let $\theta_0 \in {\mathcal N}$ and fix $0<c<1$ such that $1/c>s_{\theta_0}>c>0$. By \lemref{sts} we have $s_\theta \leq c$ for all $\theta$ in a deleted neighborhood of $\theta_0$. Then,     
\begin{equation}\label{limit}
\lim_{\theta \nearrow \theta_0} R_{\theta}(s) = R_{\theta_0}^-(s) \quad \text{and} \quad 
\lim_{\theta \searrow \theta_0} R_{\theta}(s) = R_{\theta_0}^+(s)
\end{equation}
uniformly on $s \in [c,1/c]$. \vs

In what follows by a {\bit ray} we always mean a smooth or broken ray. It easily follows from \eqref{kuku} that
\begin{align*}
P(R_\theta) & = R_{\whD(\theta)} & & \text{if} \ \theta \notin {\mathcal N} \\
P(R_\theta^\pm) & = R_{\whD(\theta)}^\pm & & \text{if} \ \theta \in {\mathcal N} \ \text{and} \ \whD(\theta) \in {\mathcal N} \\
P(R_\theta^\pm) & = R_{\whD(\theta)} & & \text{if} \ \theta \in {\mathcal N} \ \text{and} \ \whD(\theta) \notin {\mathcal N}. 
\end{align*}
A critical point of $G$ of order $n$ belongs to $n$ left and $n$ right broken rays. On the other hand, a non-critical point in $\CC \sm K_P$ belongs either to a unique smooth ray or to a pair of left and right broken rays. \vs 

Finally, let us discuss the number of critical points of $G$ on a broken ray at angle $\theta \in {\mathcal N}$. We consider three cases: \vs

$\bullet$ {\it Case 1.} $\theta$ has infinite forward orbit under $\whD$. Then by \corref{N0N} there is a smallest integer $n \geq 1$ such that $\whD^{\circ n}(\theta) \notin {\mathcal N}$, so we have the orbit of distinct rays 
$$
R^\pm_{\theta} \stackrel{P}{\longrightarrow} R^\pm_{\whD(\theta)} \stackrel{P}{\longrightarrow} \cdots \stackrel{P}{\longrightarrow} R^\pm_{\whD^{\circ n-1}(\theta)} \stackrel{P}{\longrightarrow} R_{\whD^{\circ n}(\theta)}, 
$$
with the last ray being smooth. It follows that $R^\pm_{\theta}$ can contain only finitely many critical points of $G$. These critical points must eventually map (in at most $n-1$ iterations) to distinct critical points of $P$. In particular, $R^\pm_{\theta}$ can contain at most $D-2$ critical points of $G$. \vs

$\bullet$ {\it Case 2.} $\theta$ is periodic of period $q \geq 1$ under $\whD$. Then $R^{\pm}_\theta$ are mapped onto themselves under $P^{\circ q}$:
$$
P^{\circ q}(R^{\pm}_\theta(s))=R^{\pm}_\theta(D^q s) \qquad \text{for all} \ s>0. 
$$
Since $R^{\pm}_\theta$ first crash into a critical point of $G$ at potential $s_\theta$, they contain at least the critical points $R^{\pm}_\theta(s_\theta/D^{nq})$ for every $n \geq 0$.  \vs

$\bullet$ {\it Case 3.} $\theta$ is not periodic but there is a smallest integer $n \geq 1$ such that $\whD^{\circ n}(\theta)$ is periodic of period $q \geq 1$ under $\whD$. Combining the previous two cases, it follows that $R_\theta^\pm$ contain infinitely many critical points if $\whD^{\circ n}(\theta) \in {\mathcal N}$ and only finitely many critical points if $\whD^{\circ n}(\theta) \notin {\mathcal N}$. \vs

It follows from the above analysis that a periodic ray is either smooth or infinitely broken. In particular, {\it distinct periodic rays are always disjoint}.

\subsection{Polynomial-like maps}\label{plm}  

A holomorphic map $f: U_1 \to U_0$ between Jordan domains is {\bit polynomial-like} if $\ov{U_1} \subset U_0$ and if $f$ is proper. In this case $f$ has a well-defined mapping degree $d \geq 1$. In analogy with the polynomial case, the filled Julia set of $f$ is defined as the non-empty compact set $K_f = \{ z \in U_1 : f^{\circ n}(z) \in U_1 \ \text{for all} \ n \geq 0 \}$. According to Douady and Hubbard \cite{DH}, there is a polynomial $Q$ of degree $d$ and a quasiconformal homeomorphism $\varphi : U_0 \to \varphi(U_0)$ which satisfies 
$$
\varphi \circ f = Q \circ \varphi \qquad \text{in} \ U_1,
$$ 
with $\ov{\bd} \varphi=0$ almost everywhere on $K_f$. The relation $\varphi(K_f)=K_Q$ easily follows. We say that $f$ and $Q$ are {\bit hybrid equivalent} and that $\varphi$ is a {\bit hybrid conjugacy} between $f$ and $Q$. When $K_f$ is connected, the polynomial $Q$ is uniquely determined up to affine conjugacy. 

\section{Basic properties of the set $I_K$}\label{sec:bpi}

For the rest of the paper and unless otherwise stated, we fix a monic polynomial $P$ of degree $D \geq 3$. Our standing assumption is that the filled Julia set $K_P$ is disconnected and has a non-degenerate connected component $K$ which is periodic with period $k \geq 1$. This section is devoted to the construction of the set $I=I_K$ of the angles of external rays of $P$ that accumulate on $\bd K$ and establishing its basic properties. We would like to point out that much of the material related to the proof of \thmref{A} in this section and next can be presented in the language of ``external classes'' (see \cite{DH}). However, this would require roughly the same amount of effort as the more concrete approach taken here.    

\subsection{Construction of the set $I$}\label{conI} 

For $s>0$, consider the Jordan domain 
$$
V_s := \text{the connected component of} \ G^{-1}([0,s[) \ \text{containing} \ K.
$$
Then the $V_s$ are nested and $K= \bigcap_{s>0} V_s$. For sufficiently small $s>0$ the restriction {\mapfromto {P^{\circ k}|_{V_s}} {V_s} {V_{D^k s}}} is a polynomial-like map of some degree $2 \leq d < D$ (independent of $s$) with connected filled Julia set $K$. Let us denote by $s^\ast>0$ the largest $s$ for which this is true. Equivalently, $s^\ast$ can be characterized as the largest $s$ for which all critical points of $P^{\circ k}$ in $V_s$ belong to $K$.\vs

By \lemref{sts} there are at most finitely many $\theta \in {\mathcal N}$ for which the field line $R_\theta$ crashes at a potential higher than $s$. It follows that the set
$$
E_s := \{ \theta \in \TT : R_\theta \cap V_s \neq \es \}
$$
is a union of finitely many disjoint open intervals with endpoints belonging to ${\mathcal N}$. If $E_s \not= \TT$, the complement $\TT \sm E_s$ consists of equally many closed non-degenerate intervals. The closure 
$$
I_s := \overline{E}_s
$$
is thus a finite union of disjoint closed non-degenerate intervals with endpoints in ${\mathcal N}$. Evidently $I_s$ is the set of angles $\theta$ such that the field line $R_\theta$ or precisely one of the broken rays $R^{\pm}_\theta$ enters $V_s$. \vs 

It will be convenient to call each complementary component of a compact subset of $\TT$ a {\bit gap} of that set. Each gap of $I_s$ is an open interval of the form $]\theta_1, \theta_2[$ where the broken rays $R^-_{\theta_1}$ and $R^+_{\theta_2}$ crash into a critical point of $G$ at some potential $\geq s$ and enter $V_s$ along a common field line (see \figref{Es}). We call such $R^-_{\theta_1}, R^+_{\theta_2}$ a {\bit ray pair} in $I_s$. It is not hard to see that every ray pair in $I_s$ arises from a gap in this fashion, that is, if $\theta_1, \theta_2 \in I_s$ and if $R^-_{\theta_1}$ and $R^+_{\theta_2}$ crash into a critical point at a potential $\geq s$, then $]\theta_1, \theta_2[$ is a gap of $I_s$.     

\begin{figure}[t]
\centering
\begin{overpic}[width=0.85\textwidth]{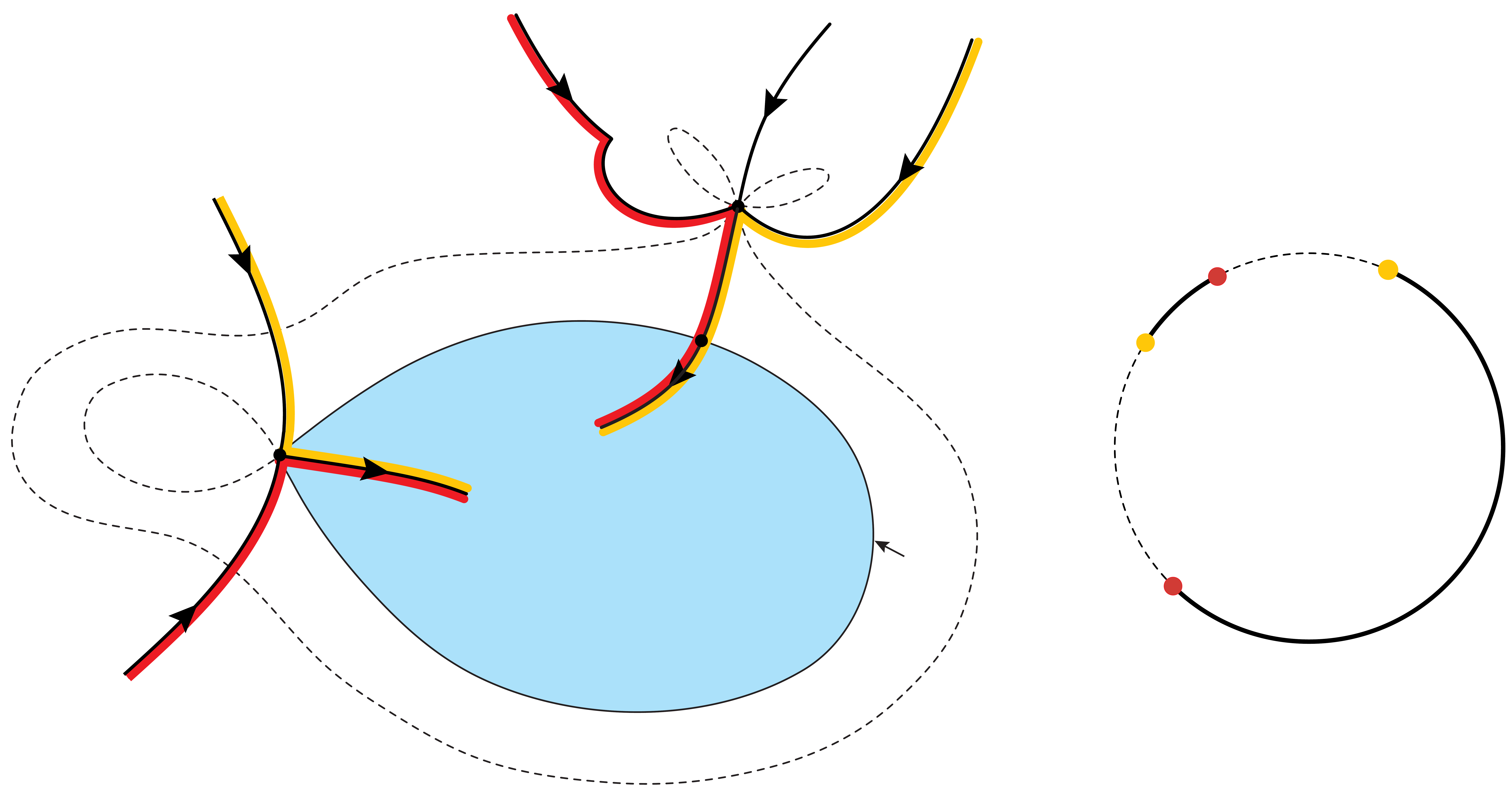}
\put (91,36) {\small $\theta_1$}
\put (77.5,35.5) {\small $\theta_2$}
\put (72,30) {\small $\theta_3$}
\put (75,11) {\small $\theta_4$}
\put (40,16) {\small $V_s$}
\put (60,13) {\small $\Gamma_s$}
\put (85,22) {\small $I_s$}
\put (65,48) {\small $R^-_{\theta_1}$}
\put (29,49) {\small $R^+_{\theta_2}$}
\put (16,39) {\small $R^-_{\theta_3}$}
\put (10,6) {\small $R^+_{\theta_4}$}
\put (47.5,30) {\small $z_1$}
\put (14.5,22.5) {\small $z_2$}
\end{overpic}
\caption{\sl The set $I_s$ is the closure of the set of angles of the field lines from $\infty$ that enter the topological disk $V_s$. Here $I_s$ has two gaps $]\theta_1, \theta_2[$ and $]\theta_3, \theta_4[$ corresponding to the ray pairs $R^-_{\theta_1}, R^+_{\theta_2}$ and $R^-_{\theta_3}, R^+_{\theta_4}$, and the topological boundary $\Ga_s=\bd V_s$ has two root points $z_1,z_2$ (see \S \ref{asep}).}  
\label{Es}
\end{figure}

\begin{lemma}\label{isk}
$\whD^{\circ k}(I_s)=I_{D^k s}$ whenever $0<s<s^\ast$.  
\end{lemma}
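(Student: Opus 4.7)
The plan is to prove both inclusions separately, using the characterization of $I_s$ as the angles of rays (smooth $R_\theta$ or one of the broken rays $R_\theta^\pm$) entering $V_s$, together with properness of the polynomial-like restriction $P^{\circ k}|_{V_s} : V_s \to V_{D^k s}$ of degree $d$. For the forward inclusion $\whD^{\circ k}(I_s) \subseteq I_{D^k s}$, given $\theta \in I_s$ I would pick a point $z \in V_s$ lying on the corresponding smooth or broken ray; then $P^{\circ k}(z) \in P^{\circ k}(V_s) = V_{D^k s}$ and, by iterating \eqref{kuku} together with the ray-correspondence rules for broken rays recorded at the end of \S\ref{gr}, the image $P^{\circ k}(z)$ lies on a smooth or broken ray at angle $\whD^{\circ k}(\theta)$, so this angle belongs to $I_{D^k s}$.

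The reverse inclusion $I_{D^k s} \subseteq \whD^{\circ k}(I_s)$ carries the real content of the lemma, and I would prove it by a lifting argument. I first show $E_{D^k s} \subseteq \whD^{\circ k}(I_s)$: given $\tau \in E_{D^k s}$, pick any point $w \in R_\tau \cap V_{D^k s}$ lying on the smooth field line, so that $w \in \CC \sm K_P$ and $\nabla G(w) \neq 0$. Properness of $P^{\circ k}|_{V_s}$ provides a preimage $z \in V_s$. The key observation is that $z$ must be non-critical for $G$: indeed $z \notin K_P$ since $w \notin K_P$, and the hypothesis $s < s^*$ guarantees that every critical point of $P^{\circ k}$ in $V_s$ lies in $K$, so $z$ is not a critical point of $P^{\circ k}$; differentiating $G \circ P^{\circ k} = D^k G$ at $z$ then forces $\nabla G(z) \neq 0$. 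Therefore $z$ lies on a unique smooth ray or broken ray pair at some angle $\theta$, and iterating \eqref{kuku} gives $\whD^{\circ k}(\theta) = \tau$. Since $z \in V_s$, this angle belongs to $I_s$.

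Finally, since $\whD^{\circ k}(I_s)$ is compact as the continuous image of a compact set and $I_{D^k s} = \overline{E_{D^k s}}$, the inclusion just established extends by closure to $I_{D^k s} \subseteq \whD^{\circ k}(I_s)$. The main technical hurdle is the non-criticality step in the lifting argument: I must pass from $w$ being a non-critical field-line point to $z$ being non-critical as well, which is exactly where the defining property of $s^*$ enters, and it is the one place where the hypothesis $s < s^\ast$ (not just $s$ small enough to make $V_s$ a Jordan domain) is actually needed.
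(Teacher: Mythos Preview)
Your argument is correct and follows essentially the same route as the paper: prove $\whD^{\circ k}(I_s)\subset I_{D^ks}$ by pushing rays forward via $P^{\circ k}(V_s)=V_{D^ks}$, then prove $E_{D^ks}\subset \whD^{\circ k}(I_s)$ by lifting a point on $R_\tau$ through the proper map $P^{\circ k}|_{V_s}$ and reading off the angle of the ray through the lift, finally passing to closures. The only cosmetic difference is that the paper lifts the boundary crossing point $\zeta\in R_{\tau}\cap\partial V_{D^ks}$ rather than an interior point, and it does not isolate a non-criticality step: since every point of $\CC\sm K_P$ (critical or not) lies on \emph{some} smooth or broken ray, one can simply pick any such ray through $z$ and proceed. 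Your non-criticality argument is valid but not needed, and your closing remark that this is ``the one place where $s<s^\ast$ is actually needed'' overstates things a bit: the relation $P^{\circ k}(V_s)=V_{D^ks}$ and properness, which you use in both inclusions, already come from the polynomial-like setup governed by $s^\ast$.
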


\begin{proof}
If $\theta \in E_s$, the field line $R_\theta$ enters $V_s$. Since $P^{\circ k}(V_s)=V_{D^k s}$, it follows that $R_{\whD^{\circ k}(\theta)} \supset P^{\circ k}(R_\theta)$ enters $V_{D^k s}$, so $\whD^{\circ k}(\theta) \in E_{D^k s}$. This proves $\whD^{\circ k}(E_s) \subset E_{D^k s}$ and the inclusion $\whD^{\circ k}(I_s) \subset I_{D^k s}$ follows. \vs

Now take any $\theta' \in E_{D^k s}$, let $\zeta$ be the intersection point of $R_{\theta'}$ with the boundary of $V_{D^k s}$ and find $z$ on the boundary of $V_s$ such that $P^{\circ k}(z)=\zeta$. If $z$ belongs to $R_\theta$ for some $\theta$, then $\theta \in E_s$ and $R_{\whD^{\circ k}(\theta)} \supset P^{\circ k}(R_\theta)$ passes through $\zeta$, so $\theta' = \whD^{\circ k}(\theta) \in \whD^{\circ k}(E_s)$. Otherwise, $z$ belongs to some broken ray $R^+_\theta$ which enters $V_s$. Then $\theta \in I_s$ and $P^{\circ k}(R^+_\theta) = R_{\whD^{\circ k}(\theta)}$ or $R^+_{\whD^{\circ k}(\theta)}$ passes through $\zeta$. Since $s_{\theta'}<D^k s$, we conclude again that $\theta' = \whD^{\circ k}(\theta) \in \whD^{\circ k}(I_s)$. This proves $E_{D^k s} \subset \whD^{\circ k}(I_s)$ and the reverse inclusion $I_{D^k s} \subset \whD^{\circ k}(I_s)$ follows. 
\end{proof}

\begin{remark}\label{excep}
Since $\whD$ is an open map, it follows from the above lemma that $\whD^{\circ k}(E_s) \subset E_{D^k s}$. On the other hand, a point on the boundary $\bd I_s$ may well map to an interior point in $E_{D^k s}$, although this should be thought of as a rare occurrence. In fact, if $\theta \in \bd I_s$ and $\whD^{\circ k}(\theta) \in E_{D^k s}$ for some $0<s<s^{\ast}$, then $\theta$ must belong to the finite set $\bigcup_{n=0}^{k-1} \whD^{-n}({\mathcal N}_0)$ (recall from \S \ref{gr} that ${\mathcal N}_0$ is the set of angles of rays which first crash into a critical point of $P$ in $\CC \sm K_P$). To see this, simply note that if $\theta, \whD(\theta), \ldots, \whD^{\circ k-1}(\theta)$ were all outside ${\mathcal N}_0$, repeated application of \lemref{sts} would give $s_{\whD^{\circ k}(\theta)}=D^k s_\theta \geq D^k s$, which would contradict $\whD^{\circ k}(\theta) \in E_{D^k s}$.   
\end{remark}

The sets $E_s$ and $I_s$ form nested families since if $s<s'$ and $R_\theta$ enters $V_s$, then it also enters $V_{s'}$. The intersection
$$
I := \bigcap_{s>0} I_s
$$
is thus a non-empty compact subset of $\TT$ which by \lemref{isk} satisfies $\whD^{\circ k}(I)=I$.  

\begin{lemma}
$I$ is the set of angles $\theta \in \TT$ for which the smooth ray $R_\theta$ accumulates on $\bd K$ if $\theta \notin {\mathcal N}$, or (precisely) one of the broken rays $R^{\pm}_\theta$ accumulates on $\bd K$ if $\theta \in {\mathcal N}$.
\end{lemma}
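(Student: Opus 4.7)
The plan is to verify both inclusions separately. \textbf{Easy direction} ($\supseteq$): if the relevant ray (smooth $R_\theta$ when $\theta \notin {\mathcal N}$, or one of the broken $R^\pm_\theta$ when $\theta \in {\mathcal N}$) accumulates on $\bd K$, then it has points arbitrarily close to $\bd K \subset K \subset V_s$, so it meets every $V_s$. By the characterization of $I_s$ recorded just above, this forces $\theta \in I_s$ for all $s > 0$, hence $\theta \in I$.

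For the hard direction ($\subseteq$), take $\theta \in I$. When $\theta \notin {\mathcal N}$, the endpoints of the intervals of $I_s$ all lie in ${\mathcal N}$, so $\theta$ must lie in $E_s$, meaning $R_\theta$ enters every $V_s$. A standard connectedness observation --- once $R_\theta(s_0) \in V_s$, the arc $R_\theta|_{(0, s_0]}$ is a connected subset of $\{G < s\}$ meeting $V_s$, hence contained in $V_s$ --- combined with $\Acc(R_\theta) \subset \bd K_P$ yields
\[
\Acc(R_\theta) \ \subset \ \bigcap_{s > 0} \overline{V_s} \cap \bd K_P \ = \ K \cap \bd K_P \ = \ \bd K,
\]
so $R_\theta$ accumulates on $\bd K$.

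When $\theta \in {\mathcal N}$, existence and uniqueness will be handled in turn. For existence: for each $s > 0$, the condition $\theta \in I_s$ forces at least one of $R^\pm_\theta$ to meet $V_s$ --- the common smooth part of $R^\pm_\theta$ handles $s > s_\theta$, and for $s \leq s_\theta$ the angle $\theta$ is an endpoint of an interval of $I_s$ and one of $R^\pm_\theta$ enters $V_s$ as part of the corresponding ray pair. Setting $E^\pm := \{s > 0 : R^\pm_\theta \cap V_s \neq \es\}$, each $E^\pm$ is upward-closed (because $V_s \subset V_{s'}$ for $s \leq s'$) and $E^+ \cup E^- = (0, \infty)$, so at least one of them --- say $E^+$ --- equals $(0, \infty)$. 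The same connectedness argument as above then gives $\Acc(R^+_\theta) \subset \bd K$, so $R^+_\theta$ accumulates on $\bd K$. For uniqueness, I invoke the observation already made at the end of \S \ref{gr}: for every $s < s_\theta$ the points $R^+_\theta(s)$ and $R^-_\theta(s)$ lie in distinct connected components $U^+, U^-$ of $G^{-1}([0, s_\theta[)$. If both $R^\pm_\theta$ accumulated on $\bd K$, each low-potential portion would have to lie in $V_s \subset V_{s_\theta}$ for every sufficiently small $s$, forcing $U^+ = U^- = V_{s_\theta}$ and contradicting $U^+ \neq U^-$. The potentially delicate step --- excluding simultaneous two-sided accumulation --- is thus dispatched for free by the disjointness of broken rays established earlier.
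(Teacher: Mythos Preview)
Your proof is correct and follows essentially the same approach as the paper's. The only cosmetic differences are: for the easy direction with $\theta \in {\mathcal N}$ you invoke the characterization of $I_s$ stated just before the lemma, whereas the paper re-derives membership in $I_s$ via the one-sided limit relation \eqref{limit}; and for the hard direction with $\theta \in {\mathcal N}$ you separate existence (your upward-closed argument on $E^\pm$) from uniqueness (the disjoint-components fact), whereas the paper handles both at once by noting that $\theta$ is a gap endpoint of $I_s$ for every $s<s_\theta$, so \emph{precisely one} of $R^\pm_\theta$ enters $V_s$ and it is always the same one.
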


\PROOF
If $\theta \notin {\mathcal N}$ and $R_\theta$ accumulates on $\bd K$, then $R_\theta$ enters $V_s$ for every $s>0$, so $\theta \in \bigcap_{s>0} E_s \subset I$. If $\theta \in {\mathcal N}$ and, say, $R^+_{\theta}$ accumulates on $\bd K$ (the case of $R^-_{\theta}$ is similar), then by \eqref{limit} for every $s>0$ there is an $\ve>0$ such that $R_{\theta'}$ enters $V_s$ if $\theta<\theta'<\theta+\ve$. Any such $\theta'$ belongs to $E_s$, so $\theta \in I_s$. Since this holds for all $s>0$, it follows that $\theta \in I$. \vs

Conversely, suppose $\theta \in I$. If $\theta \notin {\mathcal N}$, then $\theta \in E_s$ for every $s>0$. It follows that the smooth ray $R_\theta$ enters every $V_s$, so it accumulates on $\bd K$. If $\theta \in {\mathcal N}$ and $0<s<s_\theta$, then $\theta$ is a boundary point of $I_s$, so precisely one of the broken rays $R^{\pm}_\theta$ enters $V_s$. Since this holds for every $0<s<s_\theta$, it follows that this broken ray accumulates on $\bd K$.    
\ENDPROOF 

\subsection{The affine structure of equipotential curves}\label{asep}

Recall that for $s>0$ the topological disk $V_s$ is the connected component of $G^{-1}([0,s[)$ containing $K$. Let us revisit the set $E_s$ of angles of the field lines $R_\theta$ that enter $V_s$, and the closure $I_s= \ov{E_s}$. For each gap $]\theta_1, \theta_2[$ in $I_s$, the broken rays $R^-_{\theta_1}$ and $R^+_{\theta_2}$ crash at some potential $\geq s$ and enter $V_s$ along a common field line. The intersection of this field line with the topological boundary 
$$
\Ga_s := \bd V_s
$$
is called a {\bit root} point of $\Ga_s$ (see \figref{Es}). \vs  

The extended B\"{o}ttcher coordinate $\frak{B}$ defined in the simply connected domain $W$ of \eqref{omgdef} can be used to define a canonical affine structure on the equipotential curves $\Ga_s$. Recall that $\frak{B}: W \to \Sigma$ is a conformal isomorphism which satisfies $\frak{B} \circ P=\frak{B}^D$ in $W$. The function 
$$
\Theta := \frac{1}{2\pi}\arg(\frak{B}) 
$$
is harmonic and well-defined up to an additive integer.\footnote{It is easy to check that $2\pi \Theta$ is a harmonic conjugate of $G$ in $W$.} We will think of $\Theta$ as a map $W \to \TT$. \vs 

The restriction of $\Theta$ gives a homeomorphism 
$\Ga_s \sm \{ \text{root points} \} \to E_s$. At a root point of $\Ga_s$ this map has a jump discontinuity. In fact, if $z_0$ is a root point corresponding to the gap $]\theta_1, \theta_2[$ in $I_s$, then in the positive orientation of $\Ga_s$ we have $\lim_{z \nearrow z_0} \Theta(z)= \theta_1$ and $\lim_{z \searrow z_0} \Theta(z)= \theta_2$. The inverse map $E_s \to \Ga_s \sm \{ \text{root points} \}$ thus extends continuously to a surjective map $h_s : I_s \to \Ga_s$ which is homeomorphic on each connected component of $I_s$ but identifies pairs of gap endpoints by sending them to the corresponding root. Similarly, consider  a piecewise affine surjection $\Pi_s : I_s \to \TT$ that has constant slope $1/|I_s|$ on each connected component of $I_s$ and identifies pairs of gap endpoints. Thus, there is an orientation-preserving homeomorphism $\psi_s: \Gamma_s \to \TT$ which makes the following diagram commute:
\begin{equation}\label{hhh}
\begin{tikzcd}[column sep=small]
I_s \arrow[drr,swap,"\Pi_s"] \arrow[rr,"h_s"] & & \Gamma_s \arrow[d,"\psi_s"] \\
 & & \TT
\end{tikzcd} 
\end{equation}
Evidently $\Pi_s$, and therefore $\psi_s$, is unique up to a rigid rotation of the circle $\TT$. Thus there is a unique affine structure on $\Ga_s$ with respect to which any choice of $\psi_s$ is an affine homeomorphism. This structure allows us to talk about affine maps between various $\Ga_s$'s. It also equips $\Ga_s$ with a well-defined metric coming from the Euclidean metric on $\TT$. In particular, the angular length of $\Ga_s$ is
$$
|\Ga_s| = |I_s|.
$$
In what follows we always measure lengths and slopes on $\Ga_s$ with respect to this affine structure. 

\begin{lemma}\label{slp}
For $0<s<s^\ast$ the following diagram is commutative: 
\begin{equation}\label{glue}
\begin{tikzcd}[column sep=small]
I_s \arrow[d,swap,"h_s"] \arrow[rr,"\whD^{\circ k}"] & & I_{D^k s} \arrow[d,"h_{D^k s}"] \\
\Ga_s \arrow[rr,"P^{\circ k}"] & & \Ga_{D^k s} 
\end{tikzcd} 
\end{equation}
In particular, the iterate $P^{\circ k}: \Ga_s \to \Ga_{D^k s}$ is an affine covering map of degree $d$ and slope $D^k$.
\end{lemma}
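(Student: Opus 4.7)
The plan is to establish commutativity of the diagram first, then deduce the degree and the affine slope of $P^{\circ k}: \Ga_s \to \Ga_{D^k s}$ from it together with the polynomial-like structure.

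First I would verify the identity $P^{\circ k} \circ h_s = h_{D^k s} \circ \whD^{\circ k}$ on the dense open subset $E_s \subset I_s$. For $\theta \in E_s$ we have $s > s_\theta$, so the smooth ray $R_\theta$ is defined at potential $s$ and $h_s(\theta)$ is the unique point of $R_\theta$ at that potential. Iterating \eqref{kuku} gives $P^{\circ k}(R_\theta(s)) = R_{\whD^{\circ k}(\theta)}(D^k s)$; in particular $P^{\circ k}(h_s(\theta))$ is the point of $R_{\whD^{\circ k}(\theta)}$ at potential $D^k s$. Since $\whD^{\circ k}(E_s) \subset E_{D^k s}$ (established in the proof of \lemref{isk}), $\whD^{\circ k}(\theta) \in E_{D^k s}$ and this point is exactly $h_{D^k s}(\whD^{\circ k}(\theta))$. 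Continuity of $h_s$, $h_{D^k s}$, $P^{\circ k}$ and $\whD^{\circ k}$, combined with $I_s = \overline{E_s}$, then extends the identity to all of $I_s$.

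For the covering degree, recall that for $0 < s < s^\ast$ the map $P^{\circ k}: V_s \to V_{D^k s}$ is polynomial-like of degree $d$ and all of its critical points lie in $K$, which is contained in $V_s$ at potential $0$. Hence $\Ga_s = \bd V_s$ carries no critical points of $P^{\circ k}$, so the restriction $P^{\circ k}: \Ga_s \to \Ga_{D^k s}$ is a local diffeomorphism. As a proper continuous map between topological circles it is a covering map, whose degree equals that of the polynomial-like map, namely $d$. For the affine slope, the structure on $\Ga_s$ is characterized (via diagram \eqref{hhh}) by the fact that $h_s: I_s \to \Ga_s$ is an isometry on each connected component of $I_s$, where $I_s$ inherits the Euclidean length from $\TT$, and similarly for $\Ga_{D^k s}$. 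Since $\whD^{\circ k}$ has constant slope $D^k$ on $\TT$, transporting through the commutative square yields constant affine slope $D^k$ for $P^{\circ k}$ on each smooth arc of $\Ga_s$.

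The main subtlety is extending from the dense set $E_s$ to the boundary points of $I_s$: the B\"{o}ttcher-based computation only sees smooth rays, while root points of $\Ga_s$ correspond to gap endpoints of $I_s$ where $h_s$ identifies two angles into one. Remark \ref{excep} shows that a boundary point of $I_s$ need not map under $\whD^{\circ k}$ to a boundary point of $I_{D^k s}$, so a direct combinatorial tracking of root-to-root correspondences would be awkward. The continuity extension sidesteps this obstacle cleanly, which is why it is preferable to prove commutativity on $E_s$ first and then invoke density.
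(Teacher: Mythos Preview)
Your proof is correct and follows essentially the same approach as the paper: both verify the diagram on the dense set corresponding to smooth rays via the B\"{o}ttcher relation (the paper phrases it as $\Theta \circ P^{\circ k} = \whD^{\circ k} \circ \Theta$ on $\Gamma_s \sm \{\text{root points}\}$, you phrase it as $P^{\circ k}(R_\theta(s)) = R_{\whD^{\circ k}(\theta)}(D^k s)$ on $E_s$, which are the same identity viewed from opposite sides of $h_s$), then extend by continuity. Your additional justification of the degree and slope in the ``in particular'' clause is a welcome elaboration of what the paper leaves implicit.
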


\begin{proof}
For simplicity set $s':=D^k s$. Both sets $\Ga_s \sm \{ \text{root points} \}$ and $\Ga_{s'} \sm \{ \text{root points} \}$ are contained in $W$ and $P^{\circ k}$ maps the former to the latter. Since $\frak{B} \circ P^{\circ k} = \frak{B}^{D^k}$ in $W$, we obtain 
$$
\Theta \circ P^{\circ k} = \whD^{\circ k} \circ \Theta \qquad \text{on} \ \Ga_s \sm \{ \text{root points} \}. 
$$ 
The result follows since $h_s$ and $h_{s'}$ are the inverses of the restrictions of $\Theta$ to $\Ga_s \sm \{ \text{root points} \}$ and $\Ga_{s'} \sm \{ \text{root points} \}$, respectively. 
\end{proof}

Consider the continuous retraction {\mapfromto {\rho_s} {\C \sm V_s} {\Ga_s}} defined by projecting along rays. More precisely, take $\zeta \in \C \sm V_s$ and consider two cases. If $\zeta \in R_\theta$ for some $\theta \in E_s$, let $\rho_s(\zeta)$ be the unique point in $\Ga_s \cap R_\theta$. Otherwise, $\zeta$ belongs to a ray whose angle is in the closure $[\theta_1,\theta_2]$ of a gap of $I_s$. In this case, let $\rho_s(\zeta)$ be the root point of $\Ga_s$ determined by $R_{\theta_1}^- , R_{\theta_2}^+$ (see \figref{Es}). Evidently, for any $t>s$ the restriction $\rho_s: \Ga_t \to \Ga_s$ is piecewise affine of degree $1$, with slope $1$ on the open arcs corresponding to $E_s$ and slope $0$ on the arcs corresponding to $E_t \sm I_s$. \vs

Now let $0<s<s^\ast$ so the restriction {\mapfromto {P^{\circ k}} {\Ga_s}{\Ga_{D^k s}}} is a covering map of degree $d$. Consider an affine homeomorphism {\mapfromto {\psi_s} {\Ga_s} \TT} with slope $1/|\Ga_s|$, as in \eqref{hhh}. By \lemref{slp} the composition 
$$
g_s := \psi_s \circ \rho_s \circ P^{\circ k} \circ \psi_s^{-1} : \TT \to \TT
$$ 
has degree $d$ and is piecewise affine with slopes $D^k$ and $0$. Let us call a fixed point $p=g_s(p)$ near which $g_s$ is not constant {\bit semi-repelling}. The terminology is justified by the fact that the derivative or a one-sided derivative of $g_s$ at such $p$ is $D^k>1$.   

\begin{lemma}\label{fpcount}
The map $g_s$ has at least $d-1$ semi-repelling fixed points on the circle.
\end{lemma}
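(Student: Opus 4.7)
The plan is to lift $g_s$ to the real line and count integer crossings of the displacement function. Since $g_s$ has degree $d$, there is a continuous lift $\tilde{g}_s \colon \RR \to \RR$ satisfying $\tilde{g}_s(x+1) = \tilde{g}_s(x) + d$, and since the slopes of $g_s$ are non-negative (equal to $D^k$ or $0$), $\tilde{g}_s$ is non-decreasing. Define $F(x) := \tilde{g}_s(x) - x$, which is continuous and satisfies $F(x+1) = F(x) + (d-1)$. On preimages of the expanding arcs of $g_s$ (slope $D^k$) the function $F$ is linear with slope $D^k - 1 > 0$, while on preimages of the flat arcs (slope $0$) it is linear with slope $-1$. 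Fixed points of $g_s$ correspond bijectively to the solutions of $F(x) \in \ZZ$ in a fundamental domain, and a fixed point is semi-repelling precisely when it lies in the closure of an expanding arc of $g_s$.

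Now pick $x_0 \in \RR$ with $F(x_0) \notin \ZZ$. Since $F(x_0 + 1) = F(x_0) + (d-1)$, the open interval $(F(x_0),\, F(x_0)+d-1)$ contains exactly $d - 1$ integers. Fix any such integer $n$. Because $F$ is continuous with $F(x_0) < n < F(x_0 + 1)$, a standard degree argument shows that the signed number of crossings of the level $n$ by $F$ on $[x_0, x_0+1]$ is exactly $+1$; that is, the number of times the graph of $F$ passes $n$ from below to above exceeds the number of times it passes from above to below by one. In particular there is at least one up-crossing, which must occur on a piece where $F$ is increasing, i.e., on the closure of an expanding arc of $g_s$. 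The resulting point is therefore a semi-repelling fixed point of $g_s$. Different integer levels $n$ give distinct fixed points, since at each such point $F$ takes a prescribed integer value, so we obtain at least $d - 1$ distinct semi-repelling fixed points in $[x_0, x_0 + 1)$.

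The main subtlety is what happens at kink points where an expanding arc meets a flat arc: by the definition in the statement, a fixed point lying at such a kink is still semi-repelling (one-sided derivative from the expanding side equals $D^k > 1$, so $g_s$ is not locally constant there), so these boundary cases do not jeopardize the count. Choosing $x_0$ generically eliminates the uninteresting possibility $F(x_0) \in \ZZ$ and makes the tally of integers in the interval clean. No further estimate is required, and the bound $d-1$ follows.
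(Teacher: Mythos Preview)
Your proof is correct and follows essentially the same approach as the paper: lift $g_s$ to $\RR$, study the displacement function $F(x)=\tilde g_s(x)-x$ with slopes $D^k-1$ and $-1$, and use the relation $F(x+1)=F(x)+(d-1)$ to find $d-1$ integer level-crossings, each yielding a semi-repelling fixed point. The only cosmetic difference is that you choose a generic base point $x_0$ with $F(x_0)\notin\ZZ$ and phrase the argument in terms of up-crossings, whereas the paper works on $[0,1[$ and argues that not every solution of $T(t)=j$ can lie on a slope-$(-1)$ piece; your formulation is arguably cleaner at the boundary cases but the content is identical.
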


(Compare \cite{PZ2} for a closely related result.) 

\begin{proof}
Take any lift $\tilde{g}_s: \RR \to \RR$ of $g_s$ and consider the function $T(t)=\tilde{g}_s(t)-t$ which is piecewise affine with slopes $D^k-1>0$ and $-1$ and satisfies $T(t+1)=T(t)+d-1$ for all $t$. The fixed points of $g_s$ correspond to the points in $[0,1[$ at which $T$ takes an integer value, and being semi-repelling means $T$ does not have slope $-1$ there. By the intermediate value theorem, for each of the $d-1$ integers $j$ satisfying $T(0) \leq j < T(1)=T(0)+d-1$, the equation $T(t)=j$ has at least one solution in $[0,1[$. Evidently $T$ cannot have slope $-1$ at all such solutions, so at least one of them must correspond to a semi-repelling fixed point of $g_s$. 
\end{proof} 

By the construction, each semi-repelling fixed point of $g_s$ corresponds to a fixed point of $\whD^{\circ k}$ in $I_s$. Since the $I_s$ are nested with $I = \bigcap_{s>0} I_s$, and since $\whD^{\circ k}$ has only finitely many fixed points on the circle, we conclude from \lemref{fpcount} the following

\COR \label{fps}
There exist at least $d-1$ points $\theta \in I$ such that $\whD^{\circ k}(\theta) = \theta$. 
\ENDCOR

The number of fixed points of $\whD^{\circ k}$ in $I$ could be greater than $d-1$; see the examples in \S \ref{sec:ex}. 

\begin{remark}
The periodic points of $\whD$ in the above corollary are all of minimal period $k$. In fact, if $\whD^{\circ m}(\theta)=\theta$ for some $\theta \in I$ and $m>0$, then $P^{\circ m}(R_\theta)=R_\theta$ or $P^{\circ m}(R^\pm_\theta)=R^\pm_\theta$ accumulates on $K$, which implies $P^{\circ m}(K) \cap K \neq 0$. As the component $K$ has minimal period $k$ under $P$, we conclude that $m$ must be a multiple of $k$.
\end{remark}

\section{Proof of Theorem \ref{A}}\label{sec:pfsa}

We now have all the ingredients for the proof of \thmref{A}. The existence of the semiconjugacy and the Hausdorff dimension bound will be proved in \S \ref{consp}. The uniqueness of $\Pi$ will be addressed in \S \ref{uniqq}. 
 
\subsection{Construction of the semiconjugacy $\Pi$}\label{consp}

Let $\theta_0$ be a fixed point of $\whD^{\circ k}$ in $I$ whose existence is guaranteed by \corref{fps}. Fix any $s_0$ with $0<s_0<s^\ast$ and consider the potentials 
$$
s_n := D^{-nk} s_0 \qquad \text{for} \ n \geq 0. 
$$
For simplicity, we write $I_n$ for $I_{s_n}$, $\Ga_n$ for $\Ga_{s_n}$, and so on. Let {\mapfromto {\Pi_n} {I_n} \TT} denote the unique piecewise affine surjection with slope $1/|I_n|$, normalized so that $\Pi_n(\theta_0) =0$, and consider the induced orientation-preserving affine homeomorphism {\mapfromto {\psi_n} {\Ga_n} \TT} which by \eqref{hhh} satisfies 
\begin{equation}\label{php}
\psi_n \circ h_n = \Pi_n.
\end{equation}
For $n>0$, the composition $\psi_{n-1} \circ P^{\circ k} \circ \psi_n^{-1} : \TT \to \TT$ is a degree $d$ covering map that fixes $0$ and has constant slope, so it must be the map $\whd$. Thus,
$$
\psi_{n-1} \circ P^{\circ k} = \whd \circ \psi_n \qquad \text{on} \ \Ga_n.
$$
Using \eqref{glue} and \eqref{php}, we obtain the relation 
$$
\Pi_{n-1} \circ \whD^{\circ k} = \whd \circ \Pi_n \qquad \text{on} \ I_n,
$$
which can be visualized as the infinite commutative diagram
\begin{equation}\label{projn}
\begin{tikzcd}[column sep=small]
\cdots \arrow[rr,"\whD^{\circ k}"] & & I_{n+1} \arrow[d,"\Pi_{n+1}"] \arrow[rr,"\whD^{\circ k}"] & & I_n \arrow[d,"\Pi_n"] \arrow[rr,"\whD^{\circ k}"] & & \arrow[rr,"\whD^{\circ k}"] \cdots & & I_1 \arrow[d,"\Pi_1"] \arrow[rr,"\whD^{\circ k}"] & & I_0 \arrow[d,"\Pi_0"] \\
\cdots \arrow[rr,"\whd"] & & \TT \arrow[rr,"\whd"] & & \TT \arrow[rr,"\whd"] & & \arrow[rr,"\whd"] \cdots & & \TT \arrow[rr,"\whd"] & & \TT 
\end{tikzcd} 
\end{equation}
Note that this implies 
$$
\frac{|I_{n+1}|}{|I_n|}=\frac{d}{D^k} \qquad \text{for all} \ n \geq 0, 
$$
which in particular shows $|I_n| \to 0$ and therefore $|I|=0$. More precisely, we have the following

\begin{theorem}
The Hausdorff dimension of the set $I=I_K$ is at most $\log d /(k \log D)$. 
\end{theorem}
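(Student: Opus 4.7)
The plan is to cover $I \subset I_n$ by the components of $I_n$ and then control both the number $N_n$ of these components and their maximum length $c_n := \max_{J \in \mathcal{J}_n} |J|$, so that $N_n\, c_n^{\, s} \to 0$ as $n \to \infty$ for every $s > \alpha := \log d /(k \log D)$.

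For the length bound, I will choose $s_0$ small enough that $D^k s_0 < s^\ast$; this ensures that every $I_m$ with $m \geq -1$ is a proper subset of $\TT$. Given $J \in \mathcal{J}_n$, the image $\whD^{\circ k}(J)$ is a connected subset of $I_{n-1}$, so $|J| \geq D^{-k}$ would force $\whD^{\circ k}(J) = \TT \subset I_{n-1}$ and contradict $I_{n-1} \subsetneq \TT$. Hence $|J| < D^{-k}$, the map $\whD^{\circ k}|_J$ is injective, and $|J| = |\whD^{\circ k}(J)|/D^k \leq c_{n-1}/D^k$. Induction gives $c_n \leq c_0\, D^{-nk}$.

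The main technical obstacle is the bound $N_n = O(d^n)$. Each gap of $I_n$ arises from a ray pair merging at a critical point of $G$ at potential $\geq s_n$, and an order-$\ell$ critical point contributes at most $\ell \leq D-1$ such gaps, so it suffices to count critical points of $G$ in $V_{s_0} \sm V_{s_n}$ with multiplicity. The key point is that since the polynomial-like map $P^{\circ k}|_{V_{s_0}}$ has connected filled Julia set $K$, all of its critical points lie inside $K$, whence the restriction
\[
P^{\circ k}\colon V_{s_j} \sm V_{s_{j+1}} \longrightarrow V_{s_{j-1}} \sm V_{s_j}
\]
is an \emph{unbranched} degree-$d$ covering of annuli. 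By the chain rule this covering sends critical points of $G$ to critical points of $G$ in a precisely $d$-to-$1$ way, with orders preserved, so if $A_j$ denotes the sum of orders of critical points of $G$ in the $j$th shell, then $A_j = d\, A_{j-1}$ and hence $A_j = d^j A_0$. Summing $\sum_{j=0}^{n-1} A_j = O(d^n)$, and adding the $O(1)$ contribution of the finitely many critical points of $G$ that lie outside $V_{s_0}$, yields $N_n = O(d^n)$.

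Combining, for any $s > \alpha$ one has $d\, D^{-ks} < 1$ and
\[
\mathcal{H}^s(I) \leq \sum_{J \in \mathcal{J}_n} |J|^s \leq N_n\, c_n^{\, s} = O\!\left( (d\, D^{-ks})^n \right) \longrightarrow 0 \quad (n \to \infty),
\]
so $\mathcal{H}^s(I) = 0$ for every $s > \alpha$ and therefore $\dim_H(I) \leq \alpha$. The length decay and the final estimate are essentially mechanical; the substance of the proof is the unramified-covering observation, which converts the geometric ratio $|I_{n+1}|/|I_n| = d/D^k$ already established into the matching combinatorial growth rate $d^n$ for the component count.
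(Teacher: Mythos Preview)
Your argument is correct and takes a genuinely different route from the paper. The paper bounds the component count $N_n$ combinatorially via the projections $\Pi_n$: setting $B_n := \Pi_n(\bd I_n)$, the diagram \eqref{projn} gives $\whd^{-1}(B_n) \subset B_{n+1}$, and \remref{excep} bounds $|B_{n+1} \sm \whd^{-1}(B_n)|$ by a fixed constant, so $N_{n+1} \leq d\,N_n + O(1)$. You instead count critical points of $G$ geometrically in the nested shells, using that the unbranched degree-$d$ covering $P^{\circ k}$ between shells transports critical points of $G$ bijectively with orders preserved. Your approach is self-contained and avoids the $\Pi_n$ machinery entirely; the paper's argument is shorter given that this machinery is already set up for \thmref{A}. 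The length bound $c_n \leq c_0 D^{-nk}$ and the final Hausdorff-measure estimate are handled the same way in both (the paper phrases it via box dimension, you via $\mathcal H^s$ directly, but this is cosmetic).

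Two small corrections that do not affect your argument. The aside $\ell \leq D-1$ is false: the order of a critical point of $G$ can exceed $D-1$ when its forward $P$-orbit passes through several critical points of $P$ (the local degrees multiply). But you never use this bound---what matters is that an order-$\ell$ point contributes at most $\ell$ gaps, and you then sum orders. Second, ``finitely many critical points of $G$ that lie outside $V_{s_0}$'' should read ``at potential $\geq s_0$'': there can be infinitely many critical points of $G$ at potential in $[s_n, s_0)$ lying in \emph{other} components of $G^{-1}([0,s_0[)$, but these are irrelevant, since the critical point associated to a root of $\Gamma_{s_n}$ necessarily sits on $\Gamma_{G(\omega)}$ and hence inside $V_{s_0}$ when $G(\omega) < s_0$.
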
    

\begin{proof}
Let $B_n$ be the image of the set of boundary points of $I_n$ under the surjection $\Pi_n$. By \eqref{projn}, $\whd^{-1}(B_n) \subset B_{n+1}$. Moreover, by \remref{excep} any point in $B_{n+1} \sm \whd^{-1}(B_n)$ must belong to the finite set $\bigcup_{n=0}^{k-1} \whD^{-n}({\mathcal N}_0)$. This shows that the cardinality of $B_n$, i.e., the number of connected components of $I_n$, grows asymptotically as $d^n$. Evidently the connected components of $I_n$ have length $\leq \text{const.} \, D^{-nk}$. Thus the lower box dimension of $I$ is at most 
$$
\lim_{n \to \infty} \frac{\text{const.} + n \log d}{\text{const.} + nk \log D} = \frac{\log d}{k \log D}. 
$$
The result follows since the Hausdorff dimension is bounded above by the lower box dimension.   
\end{proof}

\begin{lemma}\label{dto1}
For every $\theta \in I$ there are $d$ distinct angles $\theta_1, \ldots, \theta_d$ in $\whD^{-k}(\theta) \cap I$ such that $\Pi_n$ is injective on $\{ \theta_1, \ldots, \theta_d \}$ for every $n$.
\end{lemma}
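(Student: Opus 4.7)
The plan is to construct the $d$ preimages $\theta_1, \ldots, \theta_d$ by lifting the arc $R_\theta \cap V_{-1}$ through the degree $d$ polynomial-like covering $P^{\circ k}|_{V_0}\colon V_0 \to V_{-1}$, and then to verify the $\Pi_n$-injectivity directly from the pairwise disjointness of the resulting lifts.

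First I observe that $R_\theta \cap V_{-1}$ is a simple piecewise-smooth arc from $h_{-1}(\theta) \in \Ga_{-1}$ to $\bd K$, contained in $V_{-1} \sm K$. Since $s_0 < s^*$, every critical point of $P^{\circ k}$ in $V_0$ lies in $K$, and hence every critical value of $P^{\circ k}|_{V_0}$ lies in $K$ as well. Consequently, $P^{\circ k}|_{V_0 \sm K}\colon V_0 \sm K \to V_{-1} \sm K$ is an unramified $d$-sheeted covering, and the arc $R_\theta \cap V_{-1}$ admits $d$ pairwise disjoint simple lifts $\ga_1, \ldots, \ga_d$ in $V_0 \sm K$. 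Each $\ga_i$ runs from one of the $d$ preimages $y^i \in \Ga_0$ of $h_{-1}(\theta)$ to $\bd K$, passing through the corresponding preimage $z^i \in \Ga_1$ of $w_0 := h_0(\theta)$ on its sheet; the $d$-fold multiplicity of preimages on $\Ga_0$ and $\Ga_1$ is furnished by Lemma~\ref{slp}.

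Next I extend each $\ga_i$ outward past $\Ga_0$ all the way to infinity by following the preimage of $R_\theta$ above $V_{-1}$ that terminates at $y^i$, and let $\theta_i \in \TT$ denote the resulting angle at infinity. By construction $\whD^{\circ k}(\theta_i) = \theta$, the $\theta_i$ are pairwise distinct (distinct lifts yield distinct angles), and $\theta_i \in I$ since $\ga_i$ accumulates on $\bd K$. For the injectivity of $\Pi_n$, I note that $h_n(\theta_i) = \ga_i \cap \Ga_n$ for every $n \ge 0$---the arc $\ga_i$ descends monotonically in potential from $y^i$ to $\bd K$ and so crosses each $\Ga_n$ exactly once---and the pairwise disjointness of the $\ga_i$ in $V_0 \sm K$ forces these $d$ crossings to be distinct. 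By the commutative diagram~\eqref{hhh}, $\Pi_n(\theta_i) = \psi_n(\ga_i \cap \Ga_n)$ is therefore a set of $d$ distinct points of $\TT$.

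The main subtlety I anticipate is justifying the lifting when $R_\theta$ is broken and the arc $R_\theta \cap V_{-1}$ bends at critical points of $G$ inside $V_{-1} \sm K$. The key point is that such critical points of $G$---being iterated preimages under $P$ of escaping critical points---are generally not critical points of the polynomial-like map $P^{\circ k}|_{V_0}$, whose critical set is confined to $K$; hence $P^{\circ k}|_{V_0}$ is a local biholomorphism at each $V_0$-preimage of such a point, and the $d$-sheeted covering structure along the arc is preserved, making the lifting well-defined.
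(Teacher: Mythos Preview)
Your geometric approach via lifting is natural but genuinely different from the paper's proof, which is entirely combinatorial: for each $n$ the paper exhibits a $d$-element subset $X_n \subset \whD^{-k}(\theta)\cap I_{n+1}$ on which $\Pi_{n+1}$ is injective (splitting into the cases $\theta$ interior to $I_n$ versus $\theta$ a gap endpoint), and then uses pigeonhole on the finite set $\whD^{-k}(\theta)$ together with the implication ``$\Pi_{n+1}|_X$ injective $\Rightarrow$ $\Pi_n|_X$ injective'' to extract a single $X$ that works for all $n$. No rays are ever lifted.

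Your argument has a real gap at the step ``extend each $\ga_i$ outward past $\Ga_0$ all the way to infinity \ldots\ and let $\theta_i$ denote the resulting angle.'' Outside $V_0$ the map $P^{\circ k}$ has degree $D^k$ and escaping critical points, so the global preimage $(P^{\circ k})^{-1}(R^P_\theta)$ branches and there is no canonical lift of $R^P_\theta$ to $\infty$ through $y^i$; different choices at those branch points yield different angles, and an arbitrary such lift need not be an external ray of $P$ (it need not make consistent left/right turns at critical points of $G$). Consequently $\theta_i$ is not yet well-defined, and the crucial identity $h_n(\theta_i)=\ga_i\cap\Ga_n$ you invoke presupposes exactly what has not been shown, namely that $\ga_i = R^P_{\theta_i}\cap V_0$ for some specific $\theta_i\in\whD^{-k}(\theta)$.

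The argument can be repaired, but not by extending outward. The point is that since $P^{\circ k}$ is a local conformal isomorphism along $\ga_i$, the lift $\ga_i$ makes the \emph{same} turn (right or left) as $R^P_\theta$ at every critical point of $G$ it meets; hence $\ga_i$ is a segment of a ray of the same sign as $R^P_\theta$, and $\theta_i$ is then determined by $y^i$ together with that sign (with a case check when $y^i$ is a root of $\Ga_0$). One must still verify $\whD^{\circ k}(\theta_i)=\theta$; this follows because two $+$ rays (respectively two $-$ rays) with distinct angles can never share a tail, the ``immediate right'' rule being a bijection between incoming and outgoing directions at each critical point of $G$. This is precisely the case analysis that the paper's combinatorial argument sidesteps.
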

 
\begin{proof}
Take any $n \geq 0$. First suppose $\theta$ is an interior point of $I_n$. Then by \eqref{projn}, 
$$
\whD^{-k}(\theta) \cap I_{n+1} = \Pi_{n+1}^{-1} (\whd^{-1}(\Pi_n(\theta))).
$$
Under $\Pi_{n+1}$, every point in the $d$-element set $\whd^{-1}(\Pi_n(\theta))$ has either one preimage in the interior of $I_{n+1}$, or two preimages which are endpoints of a gap of $I_{n+1}$. By removing one of these endpoints in every such pair, we obtain a $d$-element set in $\whD^{-k}(\theta) \cap I_{n+1}$ on which $\Pi_{n+1}$ is injective. \vs

Now suppose $\theta$ is a right endpoint of a gap of $I_n$ (the case of a left endpoint is similar). Let $\theta'$ be the left endpoint of the same gap so $\Pi_n(\theta)=\Pi_n(\theta')$. By \lemref{isk}, all elements of $\whD^{-k}(\theta) \cap I_{n+1}$ are right endpoints of gaps in $I_{n+1}$ and all elements of $\whD^{-k}(\theta') \cap I_{n+1}$ are left endpoints of gaps in $I_{n+1}$. By \eqref{projn}, 
$$
(\whD^{-k}(\theta) \cup \whD^{-k}(\theta')) \cap I_{n+1} = \Pi_{n+1}^{-1} (\whd^{-1}(\Pi_n(\theta))).
$$   
This shows that under $\Pi_{n+1}$, every point in the $d$-element set $\whd^{-1}(\Pi_n(\theta))$ has two preimages, one in $\whD^{-k}(\theta) \cap I_{n+1}$ and the other in $\whD^{-k}(\theta') \cap I_{n+1}$. In particular,  $\whD^{-k}(\theta) \cap I_{n+1}$ consists of exactly $d$ elements and $\Pi_{n+1}$ is injective on it. \vs

The proof of the lemma is now straightforward. By what we just showed, for each $n$ there is a $d$-element set $X_n \subset \whD^{-k}(\theta) \cap I_n$ on which $\Pi_n$ is injective. Since there are only finitely many $d$-element subsets of $\whD^{-k}(\theta)$ on the circle, some set $X$ must occur infinitely often in the sequence $\{ X_n \}$. Evidently $X$ is contained in $\whD^{-k}(\theta) \cap I$ and $\Pi_n$ is injective on it for infinitely many $n$. To see that every $\Pi_n$ is injective on $X$, it suffices to observe that injectivity of $\Pi_{n+1}|_X$ implies injectivity of $\Pi_n|_X$. In fact, if $\theta_1, \theta_2$ are distinct points in $X$ with $\Pi_n(\theta_1)=\Pi_n(\theta_2)$, then $\theta_1, \theta_2$ are the endpoints of a gap of $I_n$, and since they both belong to $I$, they must be the endpoints of the same gap of $I_{n+1}$, which shows $\Pi_{n+1}(\theta_1)=\Pi_{n+1}(\theta_2)$.
\end{proof}

For $n \geq 0$ consider the set $Z_n:=\whd^{-n}(0)$ which consists of $d^n$ equally spaced rational points of the form $i/d^n \modd$. Clearly, $Z_0=\{ 0 \} \subset Z_1 \subset Z_2 \subset \cdots$. 

\begin{lemma}\label{Cn}
There is an increasing sequence of finite sets 
$$
C_0=\{ \theta_0 \} \subset C_1 \subset C_2 \subset \cdots \subset I
$$
such that for $n \geq 1$, \vs
\begin{enumerate}
\item[(i)]
$\Pi_j$ is injective on $C_n$ for every $j$; \vs
\item[(ii)]
$\whD^{\circ k}(C_n)=C_{n-1}$; \vs
\item[(iii)]
$\Pi_n(C_n)=Z_n$. 
\end{enumerate}  
\end{lemma}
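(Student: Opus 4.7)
The plan is to construct the $C_n$ by induction on $n$, simultaneously fixing for each angle $\theta$ that enters some $C_m$ a $d$-element subset $X_\theta \subset \whD^{-k}(\theta) \cap I$ provided by \lemref{dto1}. Crucially, $X_\theta$ will be selected \emph{once}, at the moment $\theta$ first appears in some $C_m$, and never revised. Starting from $C_0:=\{\theta_0\}$, I would apply \lemref{dto1} to $\theta_0$ and, inspecting its proof, arrange that $\theta_0$ itself lies in $X_{\theta_0}$. This is possible because $\Pi_n(\theta_0)=0$ at every level, so $\theta_0$ is always a valid selection for the preimage under $\Pi_n$ of $0 \in Z_1=\whd^{-1}(0)$ that appears in the construction, and hence survives into the common subsequence $X$ produced at the end of that proof.

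At the inductive step I would set $C_n := \bigcup_{\theta \in C_{n-1}} X_\theta$, reusing the previously fixed $X_\theta$ when $\theta \in C_{n-2}$ and invoking \lemref{dto1} once for each new $\theta' \in C_{n-1} \sm C_{n-2}$ to define $X_{\theta'}$. Property (ii) is then immediate from $\whD^{\circ k}(X_\theta)=\{\theta\}$. For property (iii), the commutative diagram \eqref{projn} combined with the injectivity of $\Pi_j$ on the $d$-element set $X_\theta$ yields
\[
\Pi_j(X_\theta)=\whd^{-1}(\Pi_{j-1}(\theta)) \qquad \text{for every } j \geq 1,
\]
and taking $j=n$ and unioning over $\theta \in C_{n-1}$ gives $\Pi_n(C_n)=\whd^{-1}(\Pi_{n-1}(C_{n-1}))=\whd^{-1}(Z_{n-1})=Z_n$ by the inductive hypothesis. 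The inclusion $C_{n-1} \subset C_n$ is automatic as well: for $\theta \in C_{n-1}$, its image $\theta'' := \whD^{\circ k}(\theta)$ lies in $C_{n-2} \subset C_{n-1}$ by the inductive (ii), and $\theta$ already belongs to the previously frozen fiber $X_{\theta''}$ by the way $\theta$ was added to $C_{n-1}$, so $\theta \in X_{\theta''} \subset C_n$.

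The non-trivial part is property (i). Injectivity of each $\Pi_j$ on a single fiber $X_\theta$ is part of the conclusion of \lemref{dto1}. For two distinct $\theta \neq \theta'$ in $C_{n-1}$ and any $j \geq 1$, the identity above shows that $\Pi_j(X_\theta)$ and $\Pi_j(X_{\theta'})$ lie in disjoint fibers of $\whd$, because $\Pi_{j-1}(\theta) \neq \Pi_{j-1}(\theta')$ by the inductive (i) applied to $C_{n-1}$; hence $\Pi_j$ is injective on $C_n$ for every $j \geq 1$. For $j=0$ I would invoke the ``gap propagation'' observation used at the end of the proof of \lemref{dto1}: if distinct $\tilde\theta,\tilde\theta' \in C_n$ satisfied $\Pi_0(\tilde\theta)=\Pi_0(\tilde\theta')$, they would be the endpoints of a common gap of $I_0$; both lying in $I \subset I_1$, they would remain endpoints of the same gap of $I_1$, forcing $\Pi_1(\tilde\theta)=\Pi_1(\tilde\theta')$ and contradicting the case $j=1$ just established.

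The main obstacle — and the reason for the careful bookkeeping with the $X_\theta$'s — is to force the inclusion $C_{n-1} \subset C_n$ to coexist with injectivity of $\Pi_j$ at \emph{every} level $j$, including $j=0$. If the fibers $X_\theta$ were re-chosen freely at each stage they could shift, and an angle previously placed in $C_{n-1}$ might fail to appear in the new fiber above $\whD^{\circ k}(\theta)$. Freezing $X_\theta$ at the moment of its birth is what allows the three properties to be carried jointly through the induction.
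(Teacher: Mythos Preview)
Your proposal is correct and follows essentially the same inductive construction as the paper: apply \lemref{dto1} to each newly appearing angle, assemble the resulting $d$-element sets into $C_{n}$, and verify (i)--(iii) using the diagram \eqref{projn}. Your bookkeeping (freezing each $X_\theta$ once) is equivalent to the paper's device of writing $C_{m+1}=C_m\cup\bigcup_{\theta\in C_m\sm C_{m-1}}X_\theta$, and your explicit treatment of (i) for $j\geq 1$ versus $j=0$ and your direct computation of $\Pi_n(C_n)$ simply unpack what the paper states tersely.
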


Note that by (iii), $C_n$ has $d^n$ elements and $\Pi_n: C_n \to Z_n$ is an order-preserving bijection. 

\begin{proof}
We construct $\{ C_n \}$ inductively as follows. Set $C_0:= \{ \theta_0 \}$ and apply \lemref{dto1} to find a $d$-element set $C_1 \subset \whD^{-k}(\theta_0) \cap I$ containing $\theta_0$ on which every $\Pi_j$ is injective. Note that $\Pi_1(C_1) \subset Z_1$ by \eqref{projn}, hence $\Pi_1(C_1)=Z_1$ as both sets have $d$ elements. Suppose now that we have constructed the sets $C_0 \subset \cdots \subset C_m$ in $I$ which satisfy the conditions (i)-(iii) for all $1 \leq n \leq m$. For each $\theta \in C_m \sm C_{m-1}$ apply \lemref{dto1} to obtain a $d$-element set $X_\theta \subset \whD^{-k}(\theta) \cap I$ on which every $\Pi_j$ is injective. Define 
$$
C_{m+1} := C_m \cup \bigcup_{\theta \in C_m \sm C_{m-1}} X_\theta. 
$$     
Evidently $\whD^{\circ k}(C_{m+1})=C_m \subset C_{m+1}$. As the sets in the above union are disjoint, we see that $C_{m+1}$ has $d^{m+1}$ elements. By \eqref{projn} and the induction hypothesis, every $\Pi_j$ is injective on $C_{m+1}$, and $\Pi_{m+1}(C_{m+1}) \subset Z_{m+1}$. It follows that $\Pi_{m+1}(C_{m+1})=Z_{m+1}$, as both sets have $d^{m+1}$ elements. This completes the induction step. 
\end{proof}

As a consequence of the above lemma, we obtain the infinite commutative diagram
$$
\begin{tikzcd}[column sep=small]
\cdots \arrow[rr,"\whD^{\circ k}"] & & C_{n+1} \arrow[d,"\Pi_{n+1}"] \arrow[rr,"\whD^{\circ k}"] & & C_n \arrow[d,"\Pi_n"] \arrow[rr,"\whD^{\circ k}"] & & \arrow[rr,"\whD^{\circ k}"] \cdots & & C_1 \arrow[d,"\Pi_1"] \arrow[rr,"\whD^{\circ k}"] & & C_0 \arrow[d,"\Pi_0"] \\
\cdots \arrow[rr,"\whd"] & & Z_{n+1} \arrow[rr,"\whd"] & & Z_n \arrow[rr,"\whd"] & & \arrow[rr,"\whd"] \cdots & & Z_1 \arrow[rr,"\whd"] & & Z_0
\end{tikzcd} 
$$
in which every vertical arrow is an order-preserving bijection. Chasing around this commutative diagram shows that for all integers $j, \ell \geq 0$,
$$
\Pi_{j+\ell}(C_j)=Z_j.
$$
The proof is a straightforward induction on $j$ for each fixed $\ell$. It follows that for each $j \geq 0$,   
\begin{equation}\label{stable}
\Pi_n(\theta) = \Pi_j(\theta) \qquad \text{whenever} \ \theta \in C_j \ \text{and} \ n \geq j.
\end{equation}

It is now easy to construct the semiconjugacy $\Pi$ of \thmref{A}. Given $\theta \in I$, find adjacent points $x,y \in C_j$ such that $\theta \in [x,y]$. Then \eqref{stable} and the monotonicity of the projections show that if $n>m \geq j$, 
$$
\Pi_n(\theta)-\Pi_m(\theta) \leq \Pi_n(y) - \Pi_m(x) = \Pi_j(y)-\Pi_j(x) = \frac{1}{d^j}
$$
and
$$
\Pi_n(\theta)-\Pi_m(\theta) \geq \Pi_n(x) - \Pi_m(y) = \Pi_j(x)-\Pi_j(y) = -\frac{1}{d^j}.
$$
Since $1/d^j \to 0$ as $j \to \infty$, we conclude that the sequence $\{ \Pi_n \}$ converges uniformly on the compact set $I$ to a degree $1$ monotone surjection {\mapfromto \Pi I \TT} which by \eqref{projn} semiconjugates {\mapfromto {\whD^{\circ k}} I I} to {\mapfromto \whd \TT \TT}. 

\subsection{Uniqueness of $\Pi$}\label{uniqq}

To finish the proof of \thmref{A}, it remains to show that the semiconjugacy $\Pi$ constructed above is unique up to postcomposition with a rotation $\tau \mapsto \tau + j/(d-1) \modd$. We will prove this by first showing that the degree $1$ monotone extension of $\Pi$ semiconjugates a degree $d$ monotone extension of $\whD^{\circ k}|_I$ to $\whd$ (\lemref{sem}), and then invoking the well-known fact that such global semiconjugacies are unique up to a rotation (\corref{juju}).  \vs

Let us call a gap $J$ of the compact set $I$ {\bit minor} if $|J| < 1/D^k$ and {\bit major} if $|J| \geq 1/D^k$. The distinction depends on whether or not $\whD^{\circ k}$ acts homeomorphically on the closure of $J$. The {\bit multiplicity} of $J$ is the integer part of $D^k |J|$, that is, the number of times $\whD^{\circ k}$ wraps $J$ fully around the circle. A gap $J$ is {\bit taut} if $D^k |J|$ is an integer and {\bit loose} otherwise. Thus, minor gaps are always loose and have multiplicity $0$.  

\begin{lemma}\label{gapmap}
Suppose $]a,b[$ is a gap of $I$. Then either $\whD^{\circ k}(a)=\whD^{\circ k}(b)$ or $]\whD^{\circ k}(a),\whD^{\circ k}(b)[$ is a gap of $I$.  
\end{lemma}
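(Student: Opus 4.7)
Since $\whD^{\circ k}(I)=I$ by \lemref{isk} (in the limit $s\to 0$), both $\whD^{\circ k}(a)$ and $\whD^{\circ k}(b)$ automatically belong to $I$, so the substance of the lemma is to verify that when these images differ, the counterclockwise arc $]\whD^{\circ k}(a),\whD^{\circ k}(b)[$ contains no further point of $I$. My plan is to approximate the gap $]a,b[$ by gaps of the sets $I_s$ for small $s$, push the conclusion through the layer $\whD^{\circ k}(I_s)=I_{D^k s}$ of \lemref{isk} using the commutative diagram \eqref{glue}, and then pass to the limit $s\to 0$. For the approximation, a compactness argument on closed subarcs $[c,d]\subset\,]a,b[$ together with the fact that each $\theta\in\,]a,b[$ leaves $I_s$ for small enough $s$ shows that, for all sufficiently small $s$, the interval $]a,b[$ is contained in a single gap $]\alpha_s,\beta_s[$ of $I_s$ with $\alpha_s\in[a,c]$ and $\beta_s\in[d,b]$. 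Monotonicity of $I_s$ in $s$ makes $\alpha_s$ weakly decrease and $\beta_s$ weakly increase as $s\downarrow 0$, so they converge; if $\lim\alpha_s>a$, that limit would lie in every $I_s$ by closedness and nestedness, hence in $I\cap\,]a,b[=\es$, a contradiction. Thus $\alpha_s\to a$ and symmetrically $\beta_s\to b$.

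The next step is the $I_s$-analog of the lemma: \emph{if $]\alpha,\beta[$ is a gap of $I_s$, then either $\whD^{\circ k}(\alpha)=\whD^{\circ k}(\beta)$, or $]\whD^{\circ k}(\alpha),\whD^{\circ k}(\beta)[$ is a gap of $I_{D^k s}$.} Let $z=h_s(\alpha)=h_s(\beta)$ be the root of $\Ga_s$ associated with the gap. By \eqref{glue},
\[
h_{D^k s}(\whD^{\circ k}(\alpha)) \ =\ P^{\circ k}(z)\ =\ h_{D^k s}(\whD^{\circ k}(\beta)).
\]
Since $h_{D^k s}$ is a homeomorphism on each component of $I_{D^k s}$ and only identifies pairs of gap endpoints, two distinct preimages of a single point under $h_{D^k s}$ are necessarily the endpoints of one gap of $I_{D^k s}$. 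So if $\whD^{\circ k}(\alpha)\neq\whD^{\circ k}(\beta)$, these two angles bound some gap of $I_{D^k s}$; the relevant orientation is $]\whD^{\circ k}(\alpha),\whD^{\circ k}(\beta)[$, because locally at $\alpha$ the map $\whD^{\circ k}$ has positive slope $D^k$, sending points of $I_s$ immediately to the left of $\alpha$ to points of $I_{D^k s}$ immediately to the left of $\whD^{\circ k}(\alpha)$, and symmetrically at $\beta$. This local orientation argument is insensitive to whether $\beta-\alpha\geq 1/D^k$ causes $\whD^{\circ k}$ to wrap around globally.

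Finally, I will choose $s_n\to 0$, set $\alpha_n:=\alpha_{s_n}$, $\beta_n:=\beta_{s_n}$, and, after passing to a subsequence, assume one of the two alternatives from the second step holds for every $n$. If $\whD^{\circ k}(\alpha_n)=\whD^{\circ k}(\beta_n)$ throughout, continuity together with $\alpha_n\to a$, $\beta_n\to b$ yields $\whD^{\circ k}(a)=\whD^{\circ k}(b)$. Otherwise each $]\whD^{\circ k}(\alpha_n),\whD^{\circ k}(\beta_n)[$ is a gap of $I_{D^k s_n}$; if still $\whD^{\circ k}(a)=\whD^{\circ k}(b)$ we are done, so assume they differ. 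Since $\alpha_n\searrow a$ with $\alpha_n\geq a$ and $\beta_n\nearrow b$ with $\beta_n\leq b$, the same local slope-$D^k$ computation gives $\whD^{\circ k}(\alpha_n)\to\whD^{\circ k}(a)$ from above and $\whD^{\circ k}(\beta_n)\to\whD^{\circ k}(b)$ from below, so the image gaps form an increasing family exhausting $]\whD^{\circ k}(a),\whD^{\circ k}(b)[$. Any $\tau$ strictly interior to this arc then lies in $]\whD^{\circ k}(\alpha_n),\whD^{\circ k}(\beta_n)[$ for all large $n$, whence $\tau\notin I_{D^k s_n}$ and a fortiori $\tau\notin I=\bigcap_s I_s$. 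Together with $\whD^{\circ k}(a),\whD^{\circ k}(b)\in I$, this identifies $]\whD^{\circ k}(a),\whD^{\circ k}(b)[$ as a gap of $I$. The main obstacle will be the second step, specifically pinning down the orientation of the image gap when $\beta-\alpha$ exceeds $1/D^k$; the resolution is that the orientation is decided by purely local information (the positive slope $D^k$ of $\whD^{\circ k}$ near the endpoints) and is insensitive to any global wrapping.
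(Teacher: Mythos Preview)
Your proof is correct and follows the same overall architecture as the paper's: first establish the gap-mapping statement at finite potential level $s$, then pass to the limit $s\to 0$ via an approximating sequence of gaps $]\alpha_s,\beta_s[$ of $I_s$ with $\alpha_s\to a$, $\beta_s\to b$.

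Where you differ is in the proof of the finite-level statement. The paper argues geometrically with ray pairs: the gap $]\theta_1,\theta_2[$ of $I_s$ corresponds to broken rays $R^-_{\theta_1},R^+_{\theta_2}$ crashing into a common critical point $\omega$ at potential $\geq s$; their images under $P^{\circ k}$ share the point $P^{\circ k}(\omega)$, and if the image angles differ, the image rays must still be broken and hence form a ray pair in $I_{D^ks}$, i.e., bound a gap there. You instead use the commutative diagram~\eqref{glue} together with the fiber structure of $h_{D^ks}$: since $h_s$ collapses the gap endpoints $\alpha,\beta$ to a single root, the diagram forces $h_{D^ks}(\whD^{\circ k}(\alpha))=h_{D^ks}(\whD^{\circ k}(\beta))$, and the only nontrivial fibers of $h_{D^ks}$ are endpoint pairs of a single gap. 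The orientation then follows from the local positive slope of $\whD^{\circ k}$ on the (non-degenerate) component of $I_s$ abutting $\alpha$ on the left and $\beta$ on the right. This is a clean repackaging: it hides the ray-crashing details inside the already-established properties of $h_s$, at the small cost of the extra orientation check that the paper gets for free from the labels $R^-,R^+$.

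One cosmetic remark: in your final step, passing to a subsequence is unnecessary. Once you assume $\whD^{\circ k}(a)\neq\whD^{\circ k}(b)$, continuity of $\whD^{\circ k}$ forces $\whD^{\circ k}(\alpha_n)\neq\whD^{\circ k}(\beta_n)$ for all large $n$, so the gap alternative holds eventually and you can argue directly.
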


\begin{proof}
We first prove a version of the claim for positive potentials: Suppose $]\theta_1, \theta_2[$ is a gap of $I_s$ for some $0<s<s^{\ast}$.
Set $s':=D^k s$ and consider $\theta'_i:=\whD^{\circ k}(\theta_i) \in I_{s'}$ for $i=1,2$. The ray pair $R^-_{\theta_1}, R^+_{\theta_2}$ crash into a critical point $\omega$ with potential $G(\omega) \geq s$, so the image rays $P^{\circ k}(R^-_{\theta_1}), P^{\circ k}(R^+_{\theta_2})$ have a common point $\omega':=P^{\circ k}(\omega)$. These rays are necessarily broken if $\theta'_1 \neq \theta'_2$, since two distinct smooth rays, or a smooth and a broken ray, can never meet. Thus $P^{\circ k}(R^-_{\theta_1})=R^-_{\theta'_1}$ and $P^{\circ k}(R^+_{\theta_2})=R^+_{\theta'_2}$ must crash into a critical point with potential $\geq G(\omega') = D^k G(\omega) \geq s'$. This shows that $R^-_{\theta'_1}, R^+_{\theta'_2}$ is a ray pair in $I_{s'}$, or equivalently $]\theta'_1, \theta'_2[$ is a gap of $I_{s'}$. \vs

Now consider a gap $]a,b[$ of $I$ such that $a':=\whD^{\circ k}(a)$ and $b':=\whD^{\circ k}(b)$ are distinct. Working with the sequence of potentials $s_n=D^{-nk}s_0$ as before, there is an integer $n_1 \geq 0$ and an increasing sequence $ \{ ]a_n,b_n[ \}_{n \geq n_1}$ such that $]a_n,b_n[$ is a gap of $I_n$ and $\bigcup_{n \geq n_1} ]a_n,b_n[ = ]a,b[$. We may assume that $a'_n:=\whD^{\circ k}(a_n)$ and $b'_n:=\whD^{\circ k}(b_n)$ are distinct. Then, by the positive potential case treated above, $]a'_n,b'_n[$ is a gap of $I_{n-1}$. Since there is an integer $n_2 \geq n_1$ such that the sequence $\{ ]a'_n,b'_n[ \}_{n \geq n_2}$ is increasing and $\bigcup_{n \geq n_2} ]a'_n,b'_n[ = ]a',b'[$, we conclude that $]a',b'[ \, \cap I = \es$. Now $a',b' \in I$ implies that $]a',b'[$ is a gap of $I$. 
\end{proof}

We can extend the restriction $\whD^{\circ k}|_I$ to a continuous monotone map $f: \TT \to \TT$ of {\it minimal} degree by sending each gap $]a,b[$ of $I$ homeomorphically onto the gap $]\whD^{\circ k}(a),\whD^{\circ k}(b)[$ if $\whD^{\circ k}(a) \neq \whD^{\circ k}(b)$, and to the point $\whD^{\circ k}(a)=\whD^{\circ k}(b)$ otherwise. Thus, the image $f(J)$ is a gap or a single point in $I$ according as $J$ is a loose or taut gap.  

\begin{lemma}\label{sem}
Let $\xi: I \to \TT$ be any semiconjugacy between $\whD^{\circ k}|_I$ and $\whd$. Then the degree $1$ monotone extension $\xi: \TT \to \TT$ is a semiconjugacy between $f$ and $\whd$:
$$
\begin{tikzcd}[column sep=small]
\TT \arrow[d,swap,"\xi"] \arrow[rr,"f"] & & \TT \arrow[d,"\xi"] \\
\TT \arrow[rr,"\whd"] & & \TT 
\end{tikzcd} 
$$
In particular, $f$ is a monotone map of degree $d$. 
\end{lemma}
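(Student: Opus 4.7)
My plan is to reduce the whole statement to one topological observation, verify the commutation gap by gap, and then read off the degree of $f$. The key observation is that \emph{$\xi(a)=\xi(b)$ for every gap $]a,b[$ of $I$}. To prove it, take a monotone lift $\tilde\xi:\tilde I\to\RR$ satisfying $\tilde\xi(x+1)=\tilde\xi(x)+1$, where $\tilde I\subset\RR$ lifts $I$; by translating lifts by integers, the surjectivity of $\xi:I\to\TT$ upgrades to surjectivity of $\tilde\xi:\tilde I\to\RR$. If some gap $]a',b'[$ of $\tilde I$ satisfied $\tilde\xi(a')<\tilde\xi(b')$, then any real $\tau$ strictly between these two values would, by monotonicity of $\tilde\xi$, have its preimages forced into $]a',b'[\cap\tilde I=\es$, contradicting surjectivity.

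Granted this, the degree $1$ monotone extension $\xi:\TT\to\TT$ is uniquely forced to be constant on each closed gap $[a,b]$ of $I$, since a non-decreasing function on $[a,b]$ with equal endpoint values is constant. To verify $\xi\circ f=\whd\circ\xi$ on such a gap, note that the right side is identically $\whd(\xi(a))$ on $[a,b]$. For the other side, split using \lemref{gapmap}: if $\whD^{\circ k}(a)=\whD^{\circ k}(b)$, then by definition $f$ is constant on $[a,b]$ with value $\whD^{\circ k}(a)$, so $\xi(f(\cdot))\equiv\xi(\whD^{\circ k}(a))=\whd(\xi(a))$ by the semiconjugacy on $I$; otherwise $]\whD^{\circ k}(a),\whD^{\circ k}(b)[$ is itself a gap of $I$ and $f$ sends $[a,b]$ onto its closure, where the extended $\xi$ is again constant (by the key observation applied to this image gap) with the value $\xi(\whD^{\circ k}(a))=\whd(\xi(a))$. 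Either way the two sides agree. Combined with the commutation on $I$, this yields $\xi\circ f=\whd\circ\xi$ on all of $\TT$, and comparing degrees on both sides (using $\deg(\xi)=1$ and $\deg(\whd)=d$) immediately gives $\deg(f)=d$.

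The main conceptual obstacle is the key observation. Without it one would be forced into a delicate comparison of the winding numbers of $\xi\circ f$ and $\whd\circ\xi$ across each gap, which in general need not match. The observation itself is purely topological, relying only on monotone surjectivity together with the nowhere-density of $I$ in $\TT$ (which follows from $|I|=0$ established in \S\ref{consp}), and it is precisely where the structural information carried by a genuine semiconjugacy, over and above a mere monotone extension, is used in an essential way.
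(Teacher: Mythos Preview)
Your proof is correct and follows essentially the same route as the paper: verify $\xi\circ f=\whd\circ\xi$ gap by gap, splitting according to whether $\whD^{\circ k}(a)=\whD^{\circ k}(b)$ (taut) or not (loose, invoking \lemref{gapmap}), using that the extended $\xi$ is constant on each gap. The only difference is that the paper dismisses this last fact with ``Evidently $\xi$ is constant on each gap of $I$,'' whereas you supply an explicit argument via lifts and surjectivity. One small quibble: in your closing commentary you say the key observation relies on nowhere-density of $I$, but your actual argument (correctly) uses only monotone surjectivity of $\xi$; nowhere-density plays no role.
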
 

\begin{proof}
Evidently $\xi$ is constant on each gap of $I$. If $J=]a,b[$ is a taut gap, then $f=\whD^{\circ k}(a)$ in $J$, so $\xi \circ f = \xi(\whD^{\circ k}(a)) = \whd(\xi(a))= \whd \circ \xi$ in $J$. If $J$ is a loose gap, then $f(J)=]\whD^{\circ k}(a),\whD^{\circ k}(b)[$ is a gap by \lemref{gapmap}, so $\xi$ takes the constant value $\xi(\whD^{\circ k}(a))$ on it, and the relation $\xi \circ f = \whd \circ \xi$ in $J$ follows similarly.    
\end{proof}

\begin{corollary}\label{gapcount}
There are precisely $D^k-d$ major gaps in $I$ counting multiplicities. 
\end{corollary}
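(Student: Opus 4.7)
My plan is to place the entire degree defect $D^k - d$ on the gaps of $I$ by a direct preimage count. By Lemma~\ref{sem}, the extension $f: \TT \to \TT$ is a continuous monotone map of degree $d$ while $\whD^{\circ k}$ has degree $D^k$, and the two maps agree on $I$. Hence the discrepancy $D^k - d$ between the generic preimage cardinalities of $\whD^{\circ k}$ and $f$ must come entirely from the gaps of $I$, and the heart of the argument will be to identify the contribution of each individual gap with its multiplicity.

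First I would fix a generic angle $\tau \in \TT$: one that is neither a boundary point of any image gap $f(J_i)$ nor a point of the form $\whD^{\circ k}(a)$ for $a$ an endpoint of a gap of $I$. Since the gaps of $I$ form a countable family, the set of bad values is countable and such $\tau$ exists. For this $\tau$,
$$
\#\, (\whD^{\circ k})^{-1}(\tau) = D^k \quad \text{and} \quad \#\, f^{-1}(\tau) = d,
$$
and the preimages of $\tau$ lying in $I$ cancel upon subtraction since $f|_I = \whD^{\circ k}|_I$. Therefore
$$
D^k - d \ = \ \sum_{\text{gaps } J_i} \Big( \#\big((\whD^{\circ k})^{-1}(\tau) \cap J_i\big) - \#\big(f^{-1}(\tau) \cap J_i\big) \Big),
$$
and the problem reduces to a gap-by-gap computation.

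The key step, and the one I expect to require most care, is the observation that for a gap $J_i = \ ]a_i, b_i[$ of length $\ell_i$ and multiplicity $m_i = \lfloor D^k \ell_i \rfloor$, the image $\whD^{\circ k}(J_i)$ wraps $m_i$ full times around $\TT$ and, when $J_i$ is loose, leaves a remaining arc of length $D^k\ell_i - m_i$ that coincides exactly with the image gap $f(J_i) = \ ]\whD^{\circ k}(a_i), \whD^{\circ k}(b_i)[$. Granting this: if $J_i$ is taut then $f$ is constant on $J_i$, and generic $\tau$ has $m_i$ preimages in $J_i$ under $\whD^{\circ k}$ and none under $f$. If $J_i$ is loose then generic $\tau$ has $m_i$ (resp.\ $m_i+1$) preimages in $J_i$ under $\whD^{\circ k}$ and $0$ (resp.\ $1$) under $f$, according as $\tau \notin f(J_i)$ (resp.\ $\tau \in f(J_i)$). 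In every case the contribution of $J_i$ to the displayed sum is exactly $m_i$, so $\sum_i m_i = D^k - d$. Since minor gaps satisfy $D^k|J_i| < 1$ and therefore $m_i = 0$, the sum is supported on the major gaps, which gives the claim.
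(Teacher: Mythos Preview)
Your proof is correct. You and the paper arrive at the same identity $\sum_i m_i = D^k - d$, but by dual routes: the paper integrates over all of $\TT$ using lengths, whereas you fix one generic target $\tau$ and count preimages.

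Concretely, the paper's argument is a three–line measure computation: since $|I| = 0$ one has $\sum_i |J_i| = 1$; since $f$ has degree $d$ and $f(I)=I$ has measure zero, $\sum_i |f(J_i)| = d$; and the gap-by-gap identity $|f(J_i)| = D^k|J_i| - m_i$ then gives $\sum_i m_i = D^k - d$ immediately. Your approach is the pointwise version of the same bookkeeping: instead of summing lengths, you compare the cardinalities $D^k = \#(\whD^{\circ k})^{-1}(\tau)$ and $d = \# f^{-1}(\tau)$ at a single generic $\tau$, cancel the common preimages in $I$, and then your case analysis in each gap is exactly the statement that the ``length defect'' $D^k|J_i| - |f(J_i)|$ equals $m_i$. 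What your approach buys is that it sidesteps the justification of $\sum_i |f(J_i)| = d$ (which in the paper tacitly uses $|f(I)| = |I| = 0$), at the cost of being a little longer and requiring the genericity bookkeeping on $\tau$. Either way the heart of the matter is the same local identity in each gap.
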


Compare \cite{BBM} and \cite{Z} for the similar case of ``rotation sets'' where $d=1$. 
 
\begin{proof}
Let $\{ J_i \}$ denote the countable collection of gaps of $I$ of multiplicities $\{ m_i \}$. We have $\sum_i |J_i| = 1$ since $I$ has measure zero, hence $\sum_i |f(J_i)| =d$ since $f$ has degree $d$. The definition of multiplicity shows that $|f(J_i)|=D^k |J_i| - m_i$ for each $i$. It follows that $D^k \sum_i |J_i| - \sum_i m_i =d$, or $\sum_i m_i = D^k-d$, as required. 
\end{proof}

\begin{corollary}\label{juju}
If $\xi_1, \xi_2 : I \to \TT$ are semiconjugacies between $\whD^{\circ k}|_I$ and $\whd$, there is an integer $j$ such that $\xi_1 = \xi_2 + j/(d-1) \modd$. 
\end{corollary}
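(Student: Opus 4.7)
My plan is to use Lemma \ref{sem} to promote both semiconjugacies from $I$ to the full circle $\TT$, then pass to lifts in $\RR$ and exploit the expansion factor $d$ of $\whd$ to pin down the difference.

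First I will invoke Lemma \ref{sem} to extend $\xi_1$ and $\xi_2$ to continuous degree-$1$ monotone maps $\TT \to \TT$ satisfying $\xi_i \circ f = \whd \circ \xi_i$ on all of $\TT$. Next I choose a lift $F : \RR \to \RR$ of $f$ (nondecreasing with $F(t+1) = F(t) + d$) together with lifts $\tilde\xi_i : \RR \to \RR$ of $\xi_i$ (nondecreasing of degree $1$). The semiconjugacy relation then lifts to
\[
\tilde\xi_i \circ F = d\,\tilde\xi_i + c_i
\]
for some integers $c_1, c_2$, and the ambiguity in the choice of $\tilde\xi_i$ changes $c_i$ by an integer multiple of $d-1$, which is exactly the source of the asserted $j/(d-1)$ indeterminacy.

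The main step is to analyze $\eta := \tilde\xi_1 - \tilde\xi_2$. Since each $\tilde\xi_i$ has degree $1$, $\eta$ is $1$-periodic on $\RR$ and hence uniformly bounded. Subtracting the two semiconjugacy relations gives the functional equation $\eta \circ F = d\,\eta + (c_1 - c_2)$, which after $n$-fold iteration reads
\[
\eta(F^{\circ n}(t)) + \tfrac{c_1-c_2}{d-1} = d^n\bigl(\eta(t) + \tfrac{c_1-c_2}{d-1}\bigr) \qquad (t \in \RR,\ n \geq 0).
\]
The left side remains bounded in $n$, while the right side grows like $d^n$ (with $d \geq 2$) unless the bracketed quantity vanishes. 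Therefore $\eta \equiv (c_2-c_1)/(d-1)$ is constant, and descending to $\TT$ yields $\xi_1 \equiv \xi_2 + j/(d-1) \modd$ with $j = c_2 - c_1 \in \ZZ$.

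Conceptually the only nontrivial move is the extension step provided by Lemma \ref{sem}; once the semiconjugacies are defined globally on $\TT$ with a genuinely degree-$d$ map upstairs, the rest is a short and standard rigidity calculation for expanding circle maps. I do not anticipate any serious obstacle beyond keeping track of how the integer $c_i$ depends on the chosen lift, which is precisely the bookkeeping that furnishes the $1/(d-1)$-quantization in the final answer.
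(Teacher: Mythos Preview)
Your proof is correct and follows essentially the same approach as the paper: extend via Lemma~\ref{sem} to global semiconjugacies $\TT\to\TT$ between $f$ and $\whd$, pass to lifts on $\RR$, and use the expansion by $d$ to force the difference to be constant. The only cosmetic difference is that the paper first normalizes both semiconjugacies at a fixed point $\theta_0\in I$ of $\whD^{\circ k}$ (from Corollary~\ref{fps}) so that the lifted relation has no additive constant, and then phrases the rigidity as uniqueness of the fixed point of the contraction $\Xi\mapsto (\Xi\circ F)/d$; you instead carry the integer constants $c_i$ along and iterate directly, which is the same computation unwrapped and has the minor advantage of not invoking Corollary~\ref{fps}.
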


\begin{proof}
Consider a fixed point $\theta_0= \whD^{\circ k}(\theta_0) \in I$ as in \S \ref{consp}. The images $\xi_i(\theta_0)$ belong to the set 
$$
\Big\{ \frac{0}{d-1}, \frac{1}{d-1}, \ldots, \frac{d-2}{d-1} \Big\} \modd 
$$
of fixed points of $\whd$. Since the rotation $\tau \mapsto \tau + 1/(d-1) \modd$ commutes with $\whd$, it suffices to show that if $\xi_1(\theta_0)=\xi_2(\theta_0)=0$, then $\xi_1=\xi_2$ everywhere. By \lemref{sem}, the degree $1$ monotone extensions $\xi_i: \TT \to \TT$ are semiconjugacies between $f$ and $\whd$. This will easily imply $\xi_1=\xi_2$ as follows (compare \cite{KH}). Let $x_0 \in \RR$ be a representative of $\theta_0 \in \TT$ and $F: \RR \to \RR$ be the unique lift of $f$ such that $F(x_0)=x_0$. If $\Xi_i: \RR \to \RR$ is the unique lift of $\xi_i$ with $\Xi_i(x_0)=0$, then $\Xi_i \circ F = d\, \Xi_i$. This means the $\Xi_i$ are the fixed points of the map $\Xi \mapsto (\Xi \circ F)/d$ acting on the complete metric space of continuous functions $\RR \to \RR$ that commute with $x \mapsto x+1$, equipped with the uniform metric $\d(\Xi,\Xi')=\sup_{x \in \RR} |\Xi(x)-\Xi'(x)|$. This map is clearly contracting by a factor $1/d<1$, hence it has a unique fixed point. We conclude that $\Xi_1=\Xi_2$ or $\xi_1=\xi_2$.           
\end{proof}

\section{Proof of Theorem \ref{B}}\label{sec:pfsb}

Throughout this section we will adopt the following notations: \vs

\begin{enumerate}[leftmargin=*]
\item[$\bullet$]
For $\theta \in I$, we denote by $R^P_\theta$ the unique external ray of $P$ at angle $\theta$ that accumulates on $\bd K$. Thus, $R^P_\theta$ is the smooth ray $R_\theta$ if $\theta \in I \sm {\mathcal N}$, and it is one of the broken rays $R^{\pm}_{\theta}$ if $\theta \in I \cap {\mathcal N}$. \vs

\item[$\bullet$]
$L_\theta$ is the radial line in $\CC \sm \ov{\DD}$ at angle $\theta \in \TT$: 
$$
L_\theta : = \{ r \e^{2 \pi i \theta} : r>1 \}. 
$$

\item[$\bullet$]
$Q_d: \CC \to \CC$ is the $d$-th power map $z \mapsto z^d$. \vs

\item[$\bullet$]
$\d_X$ is the distance in the hyperbolic metric of a domain $X \subset \Chat$ whose complement has at least three points. \vs 
\end{enumerate}

\subsection{A reduction}

We begin by reducing \thmref{B} to a statement on the hyperbolic geometry of rays. Suppose $\varphi: U_0 \to \varphi(U_0)$ is a hybrid conjugacy between $P^{\circ k}:U_1 \to U_0$ and a degree $d$ polynomial $Q$ which we may assume to be monic. In order to prove \thmref{B}, it suffices to show that there is a choice of the semiconjugacy $\Pi$ such that for every $\theta \in I$ the arc $\varphi(R^P_{\theta} \cap U_1)$ and the ray segment $R^Q_{\Pi(\theta)} \cap \varphi(U_1)$ have finite Hausdorff distance in the hyperbolic metric of $\varphi(U_0) \sm K_Q$.\footnote{Recall that the Hausdorff distance between two closed sets in a metric space is the infimum of the set of $\delta>0$ such that each set is contained in the $\delta$-neighborhood of the other. If there is no such $\delta$ the Hausdorff distance is defined to be $+\infty$.} This is because a ball of fixed radius in this metric has shrinking Euclidean diameter as its center tends to $\bd K_Q$. Use the B\"{o}ttcher coordinate $\frak{B}_Q: \CC \sm K_Q \to \CC \sm \ov{\DD}$ to form the composition $\Phi:=\frak{B}_Q \circ \varphi : U_0 \sm K \to \Phi(U_0 \sm K)$ which is a quasiconformal conjugacy between $P^{\circ k}: U_1 \sm K \to U_0 \sm K$ and $Q_d: \Phi(U_1 \sm K) \to \Phi(U_0 \sm K)$. Then the above condition is equivalent to $\Phi(R^P_{\theta} \cap U_1)$ and the radial segment $L_{\Pi(\theta)} \cap \Phi(U_1 \sm K)$ having finite Hausdorff distance in the hyperbolic metric of $\Phi(U_0 \sm K)$. Thus, \thmref{B} will follow from the following   

\begin{theorem}\label{B'}
Let $U_0, U_1$ (resp. $U'_0, U'_1$) be Jordan domains containing $K$ (resp. $\ov{\DD}$) such that $\ov{U_1} \subset U_0$ (resp. $\ov{U'_1} \subset U'_0$). Set $\Omega_i:=U_i \sm K$ and $\Om'_i:=U'_i \sm \ov{\DD}$ for $i=0,1$. Suppose $\Phi: \Om_0 \to \Om'_0$ is a quasiconformal conjugacy between $P^{\circ k} : \Om_1 \to \Om_0$ and the $d$-th power map $Q_d: \Om'_1 \to \Om'_0$. Then there is a choice of the semiconjugacy $\Pi: I \to \TT$ of \thmref{A} such that for each $\theta \in I$ the arc $\Phi(R^P_{\theta} \cap \Om_1)$ and the radial segment $L_{\Pi(\theta)} \cap \Om'_1$ have finite Hausdorff distance in the hyperbolic metric of $\Om'_0$.
\end{theorem}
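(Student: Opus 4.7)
The plan is to prove \thmref{B'} by defining $\Pi : I \to \TT$ via the ray accumulation under $\Phi$ and then bounding the Hausdorff distance by comparing $\alpha_\theta := \Phi(R^P_\theta \cap \Om_1)$ with the radial segment $\beta_{\Pi(\theta)} := L_{\Pi(\theta)} \cap \Om'_1$ in a tower of fundamental annuli transported by $Q_d$.

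\textbf{Setup and definition of $\Pi$.} Iterating the conjugacy relation $\Phi \circ P^{\circ k} = Q_d \circ \Phi$, I would first introduce the nested neighborhoods $\Om_n := (P^{\circ k})^{-(n-1)}(\Om_1)$ and $\Om'_n := Q_d^{-(n-1)}(\Om'_1)$ for $n \geq 1$, satisfying $\Phi(\Om_n) = \Om'_n$, together with the fundamental annuli $A_n := \Om_n \sm \ov{\Om_{n+1}}$ and $A'_n := \Om'_n \sm \ov{\Om'_{n+1}}$. For $n \geq 1$ these are compactly contained in $\Om_0$ and $\Om'_0$ respectively, and $\Phi$ restricts to a $K$-quasiconformal homeomorphism $A_n \to A'_n$. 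For $\theta \in I$, the arc $\alpha_\theta$ descends monotonically through the $A'_n$ and accumulates on a connected closed subset of $\bd \DD$; I define $\Pi(\theta) \in \TT$ so that $e^{2\pi i \Pi(\theta)}$ belongs to this accumulation set. The semiconjugacy relation $\Pi \circ \whD^{\circ k} = \whd \circ \Pi$ is then immediate from the functional equation; monotonicity and degree one follow by transporting through $\Phi$ the piecewise affine structure on the equipotentials $\Ga_{s_n}$ in the spirit of \S \ref{sec:pfsa}, and \corref{juju} identifies $\Pi$ with the \thmref{A} semiconjugacy up to a rotation.

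\textbf{Hausdorff bound via iteration.} The main estimate I would establish is the uniform bound $\d_{\Om'_0}(p, \beta_{\Pi(\theta)} \cap A'_n) \leq M$ for every $n \geq 1$, every $\theta \in I$, and every $p \in \alpha_\theta \cap A'_n$, where $M$ depends only on $\Phi$ and the ambient geometry. The strategy is iterative: $Q_d^{n-1}$ maps $A'_n$ onto $A'_1$ as a $d^{n-1}$-fold covering, sending $\alpha_\theta \cap A'_n$ and $\beta_{\Pi(\theta)} \cap A'_n$ respectively into $\alpha_{\whD^{\circ(n-1)k}(\theta)} \cap A'_1$ and $\beta_{\whd^{\circ(n-1)}(\Pi(\theta))} \cap A'_1$. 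In the compactly contained annulus $A'_1$ with finite $\Om'_0$-hyperbolic diameter $D$, the two image sets trivially lie within $D$ of each other. The core step is the pullback: since $Q_d : \Om'_n \to \Om'_{n-1}$ is a covering, hence a local isometry in the respective hyperbolic metrics, combined with the Schwarz--Pick inclusion $\rho_{\Om'_0} \leq \rho_{\Om'_n}$ on $\Om'_n$, one obtains the inequality $\d_{\Om'_0}(p, q) \leq \d_{\Om'_0}(Q_d^{n-1}(p), Q_d^{n-1}(q))$ provided the $\Om'_0$-geodesic between the images lifts via the covering to a path from $p$ to $q$ in $\Om'_n$. The symmetric bound for points of $\beta_{\Pi(\theta)}$ then yields the claimed finite Hausdorff distance.

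\textbf{Main obstacle.} The central technical difficulty will be ensuring that the $\Om'_0$-geodesic from $Q_d^{n-1}(p)$ to $Q_d^{n-1}(q)$ lifts to a path from $p$ to $q$ in $\Om'_n$, rather than to one of the other $d^{n-1}-1$ preimages of $q$. Because $\Om'_0$ is annular and the geodesic may wind nontrivially around $\bd \DD$, the covering-compatible lift can fail to close up correctly, breaking the desired inequality. To resolve this I expect to use the fact that $p$ and $q$ lie in the same sheet of the cyclic covering $Q_d^{n-1}: A'_n \to A'_1$ — a consequence of the angular proximity of $\alpha_\theta \cap A'_n$ and $\beta_{\Pi(\theta)} \cap A'_n$ within each annular level, which itself follows from the rigid angular structure of the radial segment $\beta$ combined with an inductive control on the angular spread of $\alpha_\theta \cap A'_n$ inherited from the $K$-quasiconformal distortion of $\Phi$. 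A secondary point is verifying that $\alpha_\theta \cap A'_n$ is connected for each $n$, which follows from the connectedness of $R^P_\theta \cap \Om_n$ and the orientation-preserving homeomorphism property of $\Phi$.
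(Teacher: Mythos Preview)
Your approach diverges from the paper's, and the resolution you propose for the lifting obstacle has a genuine gap.

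The paper takes $\Pi$ as already constructed in \thmref{A} and, via \lemref{immat}, reduces to a single convenient $\Phi$: one with round targets $\Om'_i$ and normalized so that $\Phi$ sends the fixed ray $R^P_{\theta_0}$ exactly onto $L_0$. The conjugacy then forces $\Phi(R^P_\theta\cap\Om_n)=L_{\Pi(\theta)}\cap\Om'_n$ for every $\theta$ in the finite set $C_n$ of \lemref{Cn} (this is \lemref{d-adic}). These $d^n$ exact radial images cut $A'_n$ into the components of $Q_d^{-n}(\Delta')$, and \lemref{samepiece} places both $\Phi(R^P_\theta\cap A_n)$ and $L_{\Pi(\theta)}\cap A'_n$ in the closure of the \emph{same} component, because $\theta$ lies between the same adjacent pair of $C_n$-angles as $\Pi(\theta)$ does among their $\Pi$-images in $Z_n$. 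Each such component maps homeomorphically onto $\Delta'$ under $Q_d^n$, so Schwarz expansion bounds its $\rho_0$-diameter uniformly and no lifting ambiguity ever arises.

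Your definition of $\Pi(\theta)$ as a point of the accumulation set of $\alpha_\theta$ presupposes that this set is a singleton, which is essentially the conclusion; until landing is established, $\Pi$ is a choice rather than a function and the semiconjugacy relation is not immediate. More seriously, the lifting resolution does not close. Even granting an inductive angular-contraction argument of the kind you hint at, one gets that $\alpha_\theta\cap A'_n$ lies within angular distance $O(d^{-(n-1)})$ of $\Pi(\theta)$; but the sheets of $Q_d^{n-1}:A'_n\to A'_1$ have angular width exactly $d^{-(n-1)}$, so when $\Pi(\theta)$ sits near a sheet boundary the arcs $\alpha_\theta\cap A'_n$ and $\beta_{\Pi(\theta)}\cap A'_n$ can straddle it, the geodesic lifts to the wrong preimage, and your inequality fails. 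The $K$-quasiconformality of $\Phi$ contributes nothing here: it bounds distortion on compacta but says nothing about which sheet $\alpha_\theta$ occupies. The paper's fix is structural rather than metric: the sheet walls are themselves $\Phi$-images of genuine $P$-rays (those with angles in $C_n$), so the cyclic position of $\theta$ among $C_n$, which $\Pi$ preserves, pins both arcs to the same component. Your outline has no analogue of this combinatorial input.
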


In particular, the arc $\Phi(R^P_{\theta} \cap \Om_1)$ lands at the point $\exp(2 \pi i \Pi(\theta))$ on the unit circle. \vs 

The following lemma shows that it suffices to prove \thmref{B'} for {\it some} quasiconformal conjugacy: 

\begin{lemma}\label{immat}
If \thmref{B'} holds for one quasiconformal conjugacy between the restriction of $P^{\circ k}$ and $Q_d$, then it holds for any other. 
\end{lemma}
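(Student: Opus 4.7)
My plan is to reduce the lemma to understanding quasiconformal self-conjugacies of the power map $Q_d$ on the fixed annular domains. Given two quasiconformal conjugacies $\Phi_1, \Phi_2 : \Om_0 \to \Om'_0$ as in \thmref{B'}, I would set $h := \Phi_2 \circ \Phi_1^{-1}$, a quasiconformal homeomorphism of $\Om'_0$ satisfying $h \circ Q_d = Q_d \circ h$ on $\Om'_1$. Since $h$ is quasiconformal on an annular domain, it extends to a quasisymmetric self-homeomorphism of the inner boundary circle $S^1$, and the commutation relation passes to $h|_{S^1} \circ \whd = \whd \circ h|_{S^1}$ on $\TT$. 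Any orientation-preserving homeomorphism of $\TT$ commuting with the expanding map $\whd$ must be a rigid rotation $\tau \mapsto \tau + j/(d-1) \modd$ for some $j \in \{0, 1, \ldots, d-2\}$: such a map preserves the unique measure of maximal entropy of $\whd$ (Lebesgue) and hence is affine, after which the functional equation forces its translation constant to be a fixed point of $\whd$.

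The crux is then to upgrade this boundary fact to the interior statement that for every $\tau \in \TT$, the curve $h(L_\tau \cap \Om'_1)$ lies at uniformly bounded Hausdorff distance from $L_{\tau + j/(d-1)} \cap \Om'_1$ in the hyperbolic metric of $\Om'_0$. My preferred route is dynamical: iterating the identity $h = Q_d^{-n} \circ h \circ Q_d^n$ rescales the picture onto ever-thinner annular neighborhoods of $S^1$, where the Koebe distortion theorem combined with the uniform quasiconformality of $h$ forces the rescaled maps to converge to the rotation $z \mapsto \e^{2\pi i j/(d-1)} z$, producing the desired bounded deviation. Alternatively, since radial segments are hyperbolic geodesics of $\CC \sm \ov{\DD}$ (whose hyperbolic metric is comparable to that of $\Om'_0$ near $S^1$) and $h$ is a quasi-isometry of the hyperbolic metric, the image $h(L_\tau \cap \Om'_1)$ is a quasigeodesic landing at the same point of $S^1$ as $L_{\tau + j/(d-1)} \cap \Om'_1$, so the Morse stability lemma for quasigeodesics yields the bounded Hausdorff deviation.

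With these ingredients the conclusion is immediate. Assuming \thmref{B'} holds for $\Phi_1$ with semiconjugacy $\Pi$, for every $\theta \in I$ the arc $\Phi_2(R^P_\theta \cap \Om_1) = h(\Phi_1(R^P_\theta \cap \Om_1))$ lies at bounded hyperbolic distance from $h(L_{\Pi(\theta)} \cap \Om'_1)$ because $h$ is a quasi-isometry, and the latter lies at bounded hyperbolic distance from $L_{\Pi(\theta) + j/(d-1)} \cap \Om'_1$ by the previous step. So \thmref{B'} will hold for $\Phi_2$ with the semiconjugacy $\Pi' := \Pi + j/(d-1) \modd$, an admissible replacement by the uniqueness clause of \thmref{A}. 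I expect the main obstacle to be the second step, i.e., cleanly translating the boundary-level rotation into bounded-hyperbolic-distance deviation for interior curves, rather than a mere matching of landing points on $S^1$.
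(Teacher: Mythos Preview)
Your approach matches the paper's: form $\Psi = \Phi_2 \circ \Phi_1^{-1}$, extend it to the unit circle where commutation with $Q_d$ forces a rotation by $j/(d-1)$, then carry the Hausdorff bound across with the adjusted semiconjugacy $\Pi + j/(d-1)$. Two small differences are worth noting. First, the paper does not assume $\Phi_1,\Phi_2$ share a codomain; it writes $\Psi:\Om'_0 \to \Om''_0 := \Phi_2(\Om_0)$, a point your setup elides but which is easily accommodated. Second, for your ``crux'' step the paper takes a shorter route: it reduces to $j=0$ by replacing $\Phi_2$ with $S^{-1}\circ\Phi_2$, and then simply asserts that a quasiconformal $\Psi$ restricting to the identity on $\bd\DD$ satisfies $\d_{\Om''_0}(\Psi(z),z)\le M$ on a thin annulus near $\bd\DD$, with $M$ depending only on the dilatation. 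Your dynamical-renormalization and quasigeodesic-stability arguments are correct in spirit but heavier than needed; note in particular that the Morse lemma should be applied in $\CC\sm\ov{\DD}$ (which is Gromov hyperbolic) rather than in the annulus $\Om'_0$ itself.
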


\begin{proof}
Suppose \thmref{B'} holds for $\Phi_1: \Om_0 \to \Om'_0$ and a corresponding semiconjugacy $\Pi_1: I \to \TT$. Let $\Phi_2$ be another such quasiconformal conjugacy which we may assume has the same domain $\Om_0$. The composition  
$$
\Psi := \Phi_2 \circ \Phi_1^{-1}: \Om'_0 \to \Om''_0:=\Phi_2(\Om_0)
$$
is a quasiconformal homeomorphism with $|\Psi(z)| \to 1$ as $|z| \to 1$, and therefore it extends to a homeomorphism of the unit circle. Moreover, since $\Psi$ commutes with $Q_d$, its extension to the unit circle is a rational rotation of the form $S: z \mapsto \e^{2\pi i j/(d-1)} z$. First consider the case where $S$ is the identity map. If $A$ is an annulus of the form $\{ z: 1<|z|<r \}$ with $r$ sufficiently close to $1$, it follows that 
\begin{equation}\label{disM}
\d_{\Om''_0}(\Psi(z),z) \leq M \qquad \text{for all} \ z \in A, 
\end{equation}
where $M>0$ depends only on the maximal dilatation of $\Psi$. By the assumption, for every $\theta \in I$ the arcs $\Phi_1(R^P_{\theta}) \cap A$ and $L_{\Pi_1(\theta)} \cap A$ have finite Hausdorff distance in $\Om'_0$. It follows from \eqref{disM} that $\Phi_2(R^P_{\theta}) \cap A$ and $L_{\Pi_1(\theta)} \cap A$ have finite Hausdorff distance in $\Om''_0$. Thus, \thmref{B'} holds for $\Phi_2$ and the choice $\Pi_1$. \vs

If $S$ is not the identity map, we can run the above argument for $S^{-1} \circ \Phi_2$ to conclude that \thmref{B'} holds for $\Phi_2$ and the choice $\Pi_2=\Pi_1+j/(d-1)$.   
\end{proof}
 
\subsection{\thmref{B'} and its corollaries}

The proof of \thmref{B'} begins as follows. Take the polynomial-like restriction $P^{\circ k}: U_1 \to U_0$, where $U_1=V_{D^{-k}s_0}$ and $U_0=V_{s_0}$ for a sufficiently small $s_0>0$ in the notation of \S \ref{conI}. We may choose the quasiconformal conjugacy $\Phi$ such that the images $\Om'_i=\Phi(\Om_i)$ for $i=0,1$ are round annuli of the form $\Om'_0=\{ z: 0< \log |z|< r_0 \}$ and $\Om'_1=\{ z: 0< \log |z|< d^{-1}r_0 \}$ for some $r_0>0$. Set 
\begin{align*}
\Om_n & := V_{D^{-nk}s_0} \sm K \\
\Om'_n & := \Phi(\Om_n)=Q_d^{-n}(\Om'_0)=\{ z: 0< \log |z| < d^{-n}r_0 \}.
\end{align*}
The topological annuli $\{ \Om_n \}$ and the round annuli $\{ \Om'_n \}$ are nested and the difference sets
$$
A_n:= \ov{\Om}_n \sm \Om_{n+1} \qquad \text{and} \qquad A'_n:= \ov{\Om'}_n \sm \Om'_{n+1}
$$
are closed annuli. The maps $P^{\circ k}: A_{n+1} \to A_n$ and $Q_d: A'_{n+1} \to A'_n$ are degree $d$ regular coverings for every $n \geq 0$. \vs   

If $\theta_0 \in I$ is a fixed point of $\whD^{\circ k}$ given by \corref{fps}, we may choose $\Phi$ such that $\Phi(R_{\theta_0}^P \cap A_0) = L_0 \cap A'_0$. Since $\Phi$ conjugates $P^{\circ k}: \Om_1 \to \Om_0$ to $Q_d: \Om'_1 \to \Om'_0$, it follows inductively that $\Phi (R_{\theta_0}^P \cap A_n) = L_0 \cap A'_n$ for all $n \geq 0$ and therefore 
\begin{equation}\label{r00}
\Phi(R_{\theta_0}^P \cap \Om_0) = L_0 \cap \Om'_0.
\end{equation}
By \thmref{A} there is a unique semiconjugacy $\Pi: I \to \TT$ between $\whD^{\circ k}|_I$ and $\whd$ normalized so that $\Pi(\theta_0)=0$. Let $\{ C_n \}$ be the sequence of iterated preimages of $\theta_0$ in $I$ constructed in \lemref{Cn}. 
 
\begin{lemma}\label{d-adic}
For every $n \geq 0$ and every $\theta \in C_n$,
$$
\Phi(R_\theta^P \cap \Om_n) = L_{\Pi(\theta)} \cap \Om'_n.
$$
\end{lemma}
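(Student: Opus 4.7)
The plan is to induct on $n$. The base case $n=0$ reduces immediately to \eqref{r00}, since $C_0 = \{\theta_0\}$ and $\Pi(\theta_0) = 0$ by the normalization.

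For the inductive step at level $n+1$, I would fix $\theta \in C_{n+1}$ and set $\theta^\ast := \whD^{\circ k}(\theta) \in C_n$ (Lemma \ref{Cn}(ii)). Since $P^{\circ k}$ multiplies Green's potential by $D^k$, it restricts to a bijection $R^P_\theta \cap \Om_{n+1} \to R^P_{\theta^\ast} \cap \Om_n$. Conjugating via $\Phi$ and invoking the inductive hypothesis gives
$$
Q_d \big( \Phi(R^P_\theta \cap \Om_{n+1}) \big) \, = \, \Phi(R^P_{\theta^\ast} \cap \Om_n) \, = \, L_{\Pi(\theta^\ast)} \cap \Om'_n.
$$
The preimage of $L_{\Pi(\theta^\ast)} \cap \Om'_n$ under $Q_d|_{\Om'_{n+1}}$ is the disjoint union of the $d$ radial segments $L_\tau \cap \Om'_{n+1}$ with $\whd(\tau) = \Pi(\theta^\ast)$. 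Since $\Phi(R^P_\theta \cap \Om_{n+1})$ is connected and $Q_d$ restricts to a bijection on each such segment, the image must coincide with exactly one of them, say $L_{\tau(\theta)} \cap \Om'_{n+1}$ for a unique $\tau(\theta) \in \whd^{-1}(\Pi(\theta^\ast))$. It then remains to prove $\tau(\theta) = \Pi(\theta)$.

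To this end, I would show that $\tau$ and $\Pi|_{C_{n+1}}$ are two cyclic-order-preserving bijections $C_{n+1} \to Z_{n+1}$ that agree at $\theta_0$. For $\Pi|_{C_{n+1}}$ this is Lemma \ref{Cn} combined with $\Pi(\theta_0) = 0$. For $\tau$: injectivity is immediate from $\Phi$ being a homeomorphism; the inclusion $\tau(C_{n+1}) \subset Z_{n+1}$ follows by iterating the conjugation relation $\whd \circ \tau = \tau \circ \whD^{\circ k}$ down to the identity $\tau(\theta_0) = 0$, which in turn is obtained by intersecting \eqref{r00} with $\Om'_{n+1} \subset \Om'_0$; and the equal cardinalities $|C_{n+1}| = |Z_{n+1}| = d^{n+1}$ force bijectivity. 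Since a cyclic-order-preserving bijection between finite cyclic sets of equal cardinality is uniquely determined by a single value, this gives $\tau \equiv \Pi$ on $C_{n+1}$ and closes the induction.

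The main technical step I expect to argue carefully is the cyclic-order preservation of $\tau$. I would extend $\Phi$ continuously to the Jordan curve $\Ga_{n+1} = \bd V_{D^{-(n+1)k}s_0}$ and note that it maps $\Ga_{n+1}$ onto the outer boundary circle of $\Om'_{n+1}$ in an orientation-preserving way, because $\Phi$ is a sense-preserving homeomorphism of annuli carrying $\Om_{n+1}$ onto $\Om'_{n+1}$. For each $\theta \in C_{n+1}$, the ray $R^P_\theta$ meets $\Ga_{n+1}$ at the point $h_{n+1}(\theta)$; the restriction $h_{n+1}|_{C_{n+1}}$ is injective (since $\Pi_{n+1}$ is injective on $C_{n+1}$, no two elements of $C_{n+1}$ can be endpoints of the same gap of $I_{n+1}$) and cyclically order-preserving. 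Composing these identifications, the cyclic order of the $\theta \in C_{n+1}$ matches that of the outer endpoints of $L_{\tau(\theta)} \cap \Om'_{n+1}$, hence of the angles $\tau(\theta)$ on $\TT$.
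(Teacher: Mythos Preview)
Your proof is correct and follows essentially the same strategy as the paper: show that $\Phi(R^P_\theta \cap \Om_n)$ is a radial segment $L_\tau \cap \Om'_n$ for some $\tau \in Z_n$, then identify $\tau$ with $\Pi(\theta)$ by the uniqueness of cyclic-order-preserving bijections $C_n \to Z_n$ fixing $\theta_0 \mapsto 0$. The paper streamlines this by dispensing with the induction---it applies $P^{\circ nk}$ all at once to land directly on $R^P_{\theta_0} \cap \Om_0$, identifies $\tau = \Pi_n(\theta)$, and only then invokes \eqref{stable} to conclude $\Pi_n(\theta) = \Pi(\theta)$; in your version the appeal to \eqref{stable} is implicit in the claim that $\Pi|_{C_{n+1}}$ (rather than $\Pi_{n+1}|_{C_{n+1}}$) is the bijection of \lemref{Cn}, and you should cite it there.
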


\begin{proof}
Since $\whD^{\circ nk}(\theta)=\theta_0$, the ray segment $R_\theta^P \cap \Om_n$ maps under $P^{\circ nk}$ to $R_{\theta_0}^P \cap \Om_0$. It follows from \eqref{r00} that the arc $\Phi(R_\theta^P \cap \Om_n)$ maps under $Q_d^{\circ n}$ to $L_0 \cap \Om'_0$. Thus, $\Phi(R_\theta^P \cap \Om_n)=L_{\tau} \cap \Om'_n$ for some $\tau \in Z_n = \whd^{-n}(0)$. Evidently the assignment $\theta \mapsto \tau$ is an order-preserving bijection $C_n \to Z_n$ which by \eqref{r00} sends $\theta_0$ to $0$. Thus, by \lemref{Cn}, $\tau=\Pi_n(\theta)$. Finally, since $\theta \in C_n$, \eqref{stable} shows that $\Pi_n(\theta)=\Pi(\theta)$. 
\end{proof}

Now consider the open topological disks 
$$
\Delta := \mathring{A}_0 \sm R_{\theta_0}^P \qquad \text{and} \qquad  \Delta' := \Phi(\Delta) = \mathring{A}'_0 \sm L_0. 
$$
Observe that there are $d^n$ connected components of $Q_d^{-n}(\Delta')$ in $A'_n$ which are separated by the radial lines $L_\tau$ for $\tau \in Z_n$. Similarly, there are $d^n$ connected components of $P^{-nk}(\Delta)$ in $A_n$ which, in view of \lemref{d-adic}, are separated by the rays $R_\theta^P$ for $\theta \in C_n$.    

\begin{lemma}\label{samepiece}
For every $\theta \in I$ and every $n \geq 0$ there is a connected component of $Q_d^{-n}(\Delta')$ whose closure contains both arcs 
$$
\Phi(R_\theta^P \cap A_n) \qquad \textrm{and} \qquad L_{\Pi(\theta)} \cap A'_n.
$$ 
\end{lemma}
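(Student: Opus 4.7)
The plan is to identify, for each $\theta \in I$, two consecutive elements $\theta^-, \theta^+$ of $C_n$ cyclically bracketing $\theta$ on $\TT$, and to verify that the sector of $P^{-nk}(\Delta)$ bounded by the ray arcs $R^P_{\theta^\pm} \cap A_n$ contains $R^P_\theta \cap A_n$ in its closure, while its image under $\Phi$ contains $L_{\Pi(\theta)} \cap A'_n$ in its closure.

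First I would set up the combinatorial skeleton. By \lemref{d-adic}, $\Phi(R^P_{\theta'} \cap \Om_n) = L_{\Pi(\theta')} \cap \Om'_n$ for every $\theta' \in C_n$. Since $|C_n|=d^n$ and $P^{\circ nk} : A_n \to A_0$ is a regular covering of degree $d^n$, these $d^n$ arcs exhaust $P^{-nk}(R^P_{\theta_0}) \cap A_n$ and partition $\mathring A_n$ into the $d^n$ sector components of $P^{-nk}(\Delta) \cap A_n$. Under $\Phi$ they correspond in cyclic order to the $d^n$ radial sectors of $Q_d^{-n}(\Delta') \cap A'_n$ cut out by $\{ L_\tau \}_{\tau \in Z_n}$, via the order-preserving bijection $\Pi_n : C_n \to Z_n$ of \lemref{Cn}. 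Let $\theta^-, \theta^+ \in C_n$ be consecutive with $\theta \in [\theta^-,\theta^+]$ cyclically, set $\tau^\pm := \Pi(\theta^\pm)$ (consecutive in $Z_n$), and denote the matching sector components by $S \subset A_n$ and $S'=\Phi(S) \subset A'_n$.

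Next I would establish both inclusions. The radial one $L_{\Pi(\theta)} \cap A'_n \subset \overline{S'}$ is immediate from monotonicity of $\Pi$ (\thmref{A}): $\Pi(\theta) \in [\tau^-,\tau^+]$ and $\overline{S'}$ is the closed angular sector at these angles. For the ray inclusion $R^P_\theta \cap A_n \subset \overline S$, I would argue equipotentially. At each potential $s \in [s_{n+1},s_n]$, the ray $R^P_\theta$ meets $\Gamma_s$ in the single point $h_s(\theta)$, where $h_s : I_s \to \Gamma_s$ is the continuous monotone identification of \S\ref{asep}. Since $h_s$ preserves cyclic order and $\theta \in [\theta^-,\theta^+]$, we have $h_s(\theta) \in h_s([\theta^-,\theta^+])$, which coincides with $\Gamma_s \cap \overline S$ because the boundary rays $R^P_{\theta^\pm}$ meet $\Gamma_s$ exactly at $h_s(\theta^\pm)$ and no other $R^P_{\theta'}$ with $\theta' \in C_n$ intervenes. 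Sweeping $s$ over $[s_{n+1},s_n]$ traces out the entire arc $R^P_\theta \cap A_n$, all of whose points then lie in $\overline S$.

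The main technical subtlety I would expect is the behavior of $h_s$ in the broken-ray case, when $\theta$ or one of $\theta^\pm$ belongs to $\mathcal N$ and sits on the boundary of a component of $I_s$. This is controlled by the gap/root-point analysis of \S\ref{asep}: each root of $\Gamma_s$ is the common entry point of a ray pair $R^-_{\theta_1}, R^+_{\theta_2}$ corresponding to a gap $]\theta_1,\theta_2[$ of $I_s$, and the conventions $R^P_{\theta_1}=R^-_{\theta_1}$, $R^P_{\theta_2}=R^+_{\theta_2}$ forced by the characterization of $I$ in \S\ref{conI} make $h_s$ remain continuous and cyclic-order preserving through these identifications. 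This keeps the ordering $\theta^- \leq \theta \leq \theta^+$ on $\TT$ faithfully reflected in the ordering of $h_s(\theta^-), h_s(\theta), h_s(\theta^+)$ on $\Gamma_s$, closing the argument.
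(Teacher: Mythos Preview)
Your proposal is correct and follows essentially the same route as the paper: bracket $\theta$ by adjacent elements of $C_n$, use \lemref{d-adic} to identify the corresponding sector component and its $\Phi$-image, and invoke monotonicity of $\Pi$ for the radial segment. The paper's proof is terser, asserting the inclusion $R^P_\theta \cap A_n \subset \overline{\Delta_n}$ without elaboration; your equipotential argument via $h_s$ (together with the identification $R^P_\theta(s)=h_s(\theta)$ for $\theta\in I$) is a legitimate way to make that step explicit.
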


\begin{proof}
Take adjacent angles $x,y \in C_n$ such that $\theta \in [x,y[$; then $\Pi(\theta) \in [\Pi(x),\Pi(y)]$ by monotonicity. Let $\Delta_n$ be the unique connected component of $P^{-nk}(\Delta)$ whose closure contains both $R_x^P \cap A_n$ and $R_y^P \cap A_n$, and therefore $R_\theta^P \cap A_n$. It follows that the image $\Delta'_n=\Phi(\Delta_n)$ is a connected component of $Q_d^{-n}(\Delta')$ whose closure contains $\Phi(R_{\theta}^P \cap A_n)$. By \lemref{d-adic}, the closure of $\Delta'_n$ contains both $L_{\Pi(x)} \cap A'_n$ and $L_{\Pi(y)} \cap A'_n$, and therefore $L_{\Pi(\theta)} \cap A'_n$.  
\end{proof}

It is now easy to finish the proof of \thmref{B'}. 
For $n \geq 0$, let $\rho_n$ denote the hyperbolic metric of the annulus $\Om'_n$, so $\rho_0 < \rho_n$ in $\Om'_n$ by the Schwarz lemma. Choose $\delta>0$ so that the $d$ connected components of $Q_d^{-1}(\Delta')$ have diameter $< \delta$ in the metric $\rho_0$. Then any component of $Q_d^{-n}(\Delta')$ for $n \geq 2$ also has diameter $<\delta$ in the metric $\rho_0$ since the regular covering $Q_d^{\circ n}: (\Om'_n, \rho_n) \to (\Om'_0, \rho_0)$ is a local isometry and therefore $Q_d^{\circ n}: (\Om'_n, \rho_0) \to (\Om'_0, \rho_0)$ is expanding. It follows from \lemref{samepiece} that for each $\theta \in I$ the arcs $\Phi(R_\theta^P \cap \Om_1)$ and $L_{\Pi(\theta)} \cap \Om'_1$ are contained in the $\delta$-neighborhood of each other in the metric $\rho_0$, and therefore have Hausdorff distance $\leq \delta$ in $\Om'_0$. \qed 

\begin{corollary}\label{samefiber}
The following conditions on $\theta, \theta' \in I$ are equivalent: \vs
\begin{enumerate}
\item[(i)]
$\Pi(\theta)=\Pi(\theta')$. \vs
\item[(ii)]
$\d_{\CC \sm K}(R^P_\theta(s), R^P_{\theta'}(s))$ stays bounded as the Green's potential $s$ tends to $0$. \vs
\item[(iii)]
Under the (unique up to rotation) conformal isomorphism $\zeta: \CC \sm K \to \CC \sm  \ov{\DD}$, the arcs $\zeta(R^P_\theta)$ and $\zeta(R^P_{\theta'})$ land at the same point of the unit circle. \vs
\end{enumerate}
If these conditions are satisfied and $R^P_\theta$ lands at $z \in \bd K$, then $R^P_{\theta'}$ also lands at $z$. Moreover, one of the two connected components of $\CC \sm (R^P_{\theta} \cup R^P_{\theta'} \cup \{ z \})$ will be disjoint from $K$. 
\end{corollary}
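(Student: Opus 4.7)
The plan is to prove $(i) \Leftrightarrow (iii)$, then $(i) \Rightarrow (ii)$, then $(ii) \Rightarrow (iii)$, and finally to deduce the co-landing addendum. For $(i) \Leftrightarrow (iii)$, consider $\Psi := \zeta \circ \Phi^{-1}: \Om'_0 \to \zeta(\Om_0)$: this is a quasiconformal homeomorphism between two annular regions sharing the inner boundary $\bd \DD$, so it extends to a homeomorphism $\hat \Psi: \bd \DD \to \bd \DD$. Since \thmref{B'} already gives that $\Phi(R^P_\theta \cap \Om_1)$ lands at $\e^{2 \pi i \Pi(\theta)}$, applying $\Psi$ yields that $\zeta(R^P_\theta \cap \Om_1)$ lands at $\hat \Psi(\e^{2 \pi i \Pi(\theta)})$. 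Hence every $\zeta(R^P_\theta)$ does land, and its landing point is an injective function of $\Pi(\theta)$, so $(i) \Leftrightarrow (iii)$ is immediate.

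For $(i) \Rightarrow (ii)$, I would invoke the internal structure of the proof of \thmref{B'} (not only its statement): for every $\theta \in I$ and $n \geq 0$ the arcs $\Phi(R^P_\theta \cap A_n)$ and $L_{\Pi(\theta)} \cap A'_n$ both sit inside the closure of a common connected component of $Q_d^{-n}(\Delta')$, of hyperbolic diameter $\leq \delta$ in $\Om'_0$. Assuming $\Pi(\theta) = \Pi(\theta')$ and choosing the Green potential $s$ so that $R^P_\theta(s)$ and $R^P_{\theta'}(s)$ lie in the same shell $A_n$ (for instance $s \in [D^{-(n+1)k} s_0, D^{-nk} s_0]$), the triangle inequality gives
\[
\d_{\Om'_0}(\Phi(R^P_\theta(s)), \Phi(R^P_{\theta'}(s))) \leq 2 \delta + \diam_{\Om'_0}(L_{\Pi(\theta)} \cap A'_n),
\]
and a direct round-annulus computation shows that the last diameter is uniformly $O(\log d)$, independent of $n$. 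Quasiconformality of $\Phi$ transports this into a uniform hyperbolic bound in $\Om_0$, and Schwarz contraction under the inclusion $\Om_0 \hookrightarrow \CC \sm K$ finishes (ii). For $(ii) \Rightarrow (iii)$, since $\zeta$ is a hyperbolic isometry $\CC \sm K \to \CC \sm \Dbar$, (ii) rephrases as uniform boundedness of $\d_{\CC \sm \Dbar}(\zeta(R^P_\theta(s)), \zeta(R^P_{\theta'}(s)))$ as $s \to 0$; since the hyperbolic density on $\CC \sm \Dbar$ near $\bd \DD$ is comparable to $1/(|z|-1)$, two sequences at bounded hyperbolic distance that both approach $\bd \DD$ must do so to a common limit. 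Combined with the landing of $\zeta(R^P_\theta)$ already established from $(i) \Leftrightarrow (iii)$, this forces equality of landing points.

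For the co-landing addendum, assume (i)--(iii) and that $R^P_\theta$ lands at $z \in \bd K$, so $\Acc(R^P_\theta) = \{z\}$. Then \thmref{B} gives $\Acc(R^Q_{\Pi(\theta)}) = \{\varphi(z)\}$; applying \thmref{B} again to $\theta'$ (which has the same $\Pi$-value) and using injectivity of $\varphi$ yields $\Acc(R^P_{\theta'}) = \{z\}$, so $R^P_{\theta'}$ co-lands at $z$. The union $R^P_\theta \cup R^P_{\theta'} \cup \{z\}$ is then a closed arc in $\Chat$ connecting $\infty$ to itself through $z$, separating $\Chat$ into two Jordan domains; since $K$ is connected and $K \sm \{z\}$ is disjoint from this arc, $K \sm \{z\}$ lies entirely in one of the two components, leaving the other disjoint from $K$. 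The main anticipated obstacle is the step $(i) \Rightarrow (ii)$: promoting the Hausdorff bound of \thmref{B'} to a pointwise bound at matching Green potentials requires opening up the body of that proof and combining it with the round-annulus diameter estimate; the remaining equivalences reduce to standard quasiconformal boundary extension and horodisk-comparison arguments.
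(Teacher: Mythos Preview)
Your equivalence proof is correct and tracks the paper's almost step for step: the map you call $\Psi$ is exactly the paper's $\xi=\zeta\circ\Phi^{-1}$, and its boundary extension drives both $(i)\Leftrightarrow(iii)$ and the landing assertion inside $(ii)\Rightarrow(iii)$. One minor difference worth noting: for $(i)\Rightarrow(ii)$ you bound the distance in $\Om'_0$ and then say ``quasiconformality of $\Phi$ transports this into a uniform hyperbolic bound in $\Om_0$''. That step (qc homeomorphisms are quasi-isometries for the hyperbolic metric) is true but not entirely elementary for annuli. The paper sidesteps it: since both $\Phi(R^P_\theta\cap A_n)$ and $\Phi(R^P_{\theta'}\cap A_n)$ lie in the closure of a single component of $Q_d^{-n}(\Delta')$, applying $\Phi^{-1}$ places $R^P_\theta\cap A_n$ and $R^P_{\theta'}\cap A_n$ in the closure of a single component of $P^{-nk}(\Delta)$ in $A_n$, and then the Schwarz lemma applied directly to the covering $P^{\circ nk}:(\Om_n,\rho_n)\to(\Om_0,\rho_0)$ yields the diameter bound in $\Om_0$ without ever transporting a metric across $\Phi$. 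Your detour through the round-annulus diameter of $L_{\Pi(\theta)}\cap A'_n$ is then unnecessary.

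There is, however, a genuine gap in your separation argument for the addendum. You write that ``since $K$ is connected and $K\sm\{z\}$ is disjoint from this arc, $K\sm\{z\}$ lies entirely in one of the two components''. But $K\sm\{z\}$ need not be connected (two closed disks touching at $z$ form a full, connected continuum whose complement in $\{z\}$ is disconnected), so nothing prevents its pieces from falling on both sides of the Jordan curve. The paper's hint is to argue via $(iii)$ instead: push forward by $\zeta$ so the separating set becomes $\zeta(R^P_\theta)\cup\zeta(R^P_{\theta'})\cup\{p\}$ with $p\in\bd\DD$. Now the relevant connected set is $\Dbar\sm\{p\}$, which \emph{is} connected and therefore lies in one complementary component; the other component then sits entirely in $\CC\sm\Dbar=\zeta(\CC\sm K)$, and pulling back by $\zeta^{-1}$ produces a component of $\CC\sm(R^P_\theta\cup R^P_{\theta'}\cup\{z\})$ disjoint from $K$.
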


\begin{proof}
We will use a quasiconformal conjugacy $\Phi$ and the associated objects, using the notations in the above proof of \thmref{B'}. \vs

(i) $\Longrightarrow$ (ii): By \lemref{samepiece}, for every $n \geq 0$ there is a connected component of $Q_d^{-n}(\Delta')$ whose closure contains both $\Phi(R^P_\theta \cap A_n)$ and $\Phi(R^P_{\theta'} \cap A_n)$. It follows that there is a connected component of $P^{-kn}(\Delta)$ in $A_n$ whose closure contains $R^P_\theta \cap A_n$ and $R^P_{\theta'} \cap A_n$. By a similar application of the Schwarz lemma as above, for $n \geq 1$ these components have uniformly bounded diameters in the hyperbolic metric of $\Om_0$. Thus, there is a $\delta>0$ such that for all $n \geq 1$,
$$
\d_{\Om_0}(R^P_\theta(s),R^P_{\theta'}(s)) < \delta \quad \text{if} \quad  D^{-(n+1)k}s_0 \leq s \leq D^{-nk}s_0.
$$ 
This proves $\d_{\Om_0}(R^P_\theta(s),R^P_{\theta'}(s)) < \delta$ for all $0<s \leq D^{-k}s_0$, and (ii) follows since the hyperbolic metrics of $\CC \sm K$ and $\Om_0$ are comparable in $\Om_1$. \vs

(ii) $\Longrightarrow$ (iii): Since the conformal isomorphism $\zeta$ is a hyperbolic isometry, $\d_{\CC \sm \ov{\DD}}(\zeta(R^P_\theta(s)),\zeta(R^P_{\theta'}(s)))$ stays bounded as $s \to 0$. In particular, $\zeta(R^P_\theta)$ and $\zeta(R^P_{\theta'})$ have the same accumulation sets on $\bd \DD$. Thus, we need only check that the arc $\zeta(R^P_\theta)$ lands. To see this, note that the map 
$$
\xi := \zeta \circ \Phi^{-1} : \Phi(\Om_0) \to \zeta(\Om_0) 
$$
is quasiconformal with $|\xi(z)| \to 1$ as $|z| \to 1$, so it extends to a homeomorphism of the unit circle. Since $\Phi(R^P_\theta \cap \Om_0)$ lands at $\exp(2 \pi i \Pi(\theta))$ by \thmref{B'}, the arc $\zeta(R^P_\theta \cap \Om_0)$ lands at $\xi(\exp(2 \pi i \Pi(\theta)))$. \vs 

(iii) $\Longrightarrow$ (i): By the above paragraph, the assumption that $\zeta(R^P_\theta)$ and $\zeta(R^P_{\theta'})$ land at the same point implies 
$$
\xi(\exp(2 \pi i \Pi(\theta)))=\xi(\exp(2 \pi i \Pi(\theta'))). 
$$
Since $\xi$ is a homeomorphism of the unit circle, we conclude that $\Pi(\theta)=\Pi(\theta')$. \vs

The last two assertions are straightforward consequences of (i) and (iii), respectively.   
\end{proof}

\section{Proof of \thmref{C}}\label{sec:val}

Let $\Pi: \TT \to \TT$ be the degree $1$ monotone extension of the semiconjugacy between $\whD^{\circ k}|_I$ and $\whd$ constructed in \S \ref{consp}, and $f: \TT \to \TT$ be a degree $d$ monotone extension of $\whD^{\circ k}|_I$ as in \S \ref{uniqq}. We construct a (maximal) Cantor set $C \subset I$ whose gaps are precisely the interiors of the non-degenerate fibers of $\Pi$. We will prove \thmref{C} by bounding the number of points of $I$ that lie in a given gap of $C$.  

\subsection{A Cantor subset of $I$.}

For $x \in \TT$, let $H_x$ denote the fiber $\Pi^{-1}(x)$. The semiconjugacy relation $\Pi \circ f = \whd \circ \Pi$ (\lemref{sem}) and monotonicity of $f$ show that 
$$
f(H_y) = H_x \qquad \text{if} \ x=\whd(y)
$$
and 
\begin{equation}\label{aayy}
f^{-1}(H_x) = \bigcup_{y \in \whd^{-1}(x)} H_y.
\end{equation}
Define 
$$
C := \TT \sm \bigcup_{x \in \TT} \mathring{H}_x. 
$$
Here each $\mathring{H}_x$, the interior of the fiber $H_x$, is an open interval (possibly empty). Evidently $C$ is a non-empty compact proper subset of the circle. 

\begin{lemma}\label{maxC}
$C$ is a Cantor subset of $I$ with $f(C)=C$.   
\end{lemma}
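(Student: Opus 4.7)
The plan is to verify, in this order: (a) $C$ is a non-empty compact subset of $I$; (b) $C$ is totally disconnected; (c) $C$ is perfect; (d) $f(C)=C$.

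For (a), I would first recall from the proof of \lemref{sem} that the monotone extension $\Pi:\TT\to\TT$ is constant on each gap of $I$. Thus every gap of $I$ is swallowed by a single fiber $H_x$ and lies inside $\mathring{H}_x$, giving $\TT\sm I\subset\bigcup_x\mathring{H}_x$ and hence $C\subset I$. The set $C$ is closed by definition, and each $H_x$ contributes at least one point (its unique point if $H_x$ is degenerate, or its two endpoints if not) to $C$, so $C$ is non-empty. For (b), \thmref{A} guarantees that $I$ has Hausdorff dimension strictly less than $1$, hence Lebesgue measure zero; the closed set $C\subset I$ is therefore nowhere dense in $\TT$, and any compact nowhere dense subset of the circle is automatically totally disconnected.

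For perfectness (c) I would argue by contradiction. Suppose $\theta\in C$ is isolated, so some $\epsilon>0$ makes $(\theta-\epsilon,\theta+\epsilon)\cap C=\{\theta\}$. Each of the two open half-intervals $(\theta-\epsilon,\theta)$ and $(\theta,\theta+\epsilon)$ is then a connected open subset of the disjoint union $\bigsqcup_x\mathring{H}_x$, so each lies inside a single $\mathring{H}_{x_\pm}$. Continuity of $\Pi$ at $\theta$ forces $x_-=x_+=\Pi(\theta)$, whence $(\theta-\epsilon,\theta+\epsilon)\subset H_{\Pi(\theta)}$ and $\theta\in\mathring{H}_{\Pi(\theta)}$, contradicting $\theta\in C$.

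The main obstacle is (d). The inclusion $f(C)\subset C$ is a parallel continuity argument: if $\theta\in C$ but $f(\theta)\in\mathring{H}_{x'}$, choose a neighborhood $U$ of $\theta$ with $f(U)\subset\mathring{H}_{x'}$; the semiconjugacy $\Pi\circ f=\whd\circ\Pi$ gives $\Pi(U)\subset\whd^{-1}(x')$, a discrete $d$-point set, so connectedness of $\Pi(U)$ makes $\Pi|_U$ constant and drags $\theta$ into $\mathring{H}_{\Pi(\theta)}$. The reverse inclusion requires the stronger statement $f(H_y)=H_{\whd(y)}$ for every $y\in\TT$. I would prove this by lifting $f$ and $\Pi$ to $\RR$ and using that, because $f$ has degree $d$, the preimage under $f$ of any point of $\TT$ decomposes into exactly $d$ connected components, which the semiconjugacy places one in each of the $d$ fibers of $\Pi$ sitting above the corresponding $\whd$-preimages. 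Given $\theta'\in C$ with $x'=\Pi(\theta')$ and any $y\in\whd^{-1}(x')$, the restriction $f|_{H_y}:H_y\to H_{x'}$ is then a monotone continuous surjection: if $H_{x'}=\{\theta'\}$, any point of the non-empty set $H_y\cap C$ maps to $\theta'$; otherwise $H_{x'}=[p,q]$ is non-degenerate, so $H_y$ is non-degenerate as well and its endpoints (which lie in $C$) are sent by the monotone surjection to $p$ and $q$, one of which is $\theta'$.
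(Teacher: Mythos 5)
Your proposal is correct and follows essentially the same route as the paper's proof: every gap of $I$ lies in a single $\mathring{H}_x$ (giving $C \subset I$), $C$ is perfect because the closures of distinct fibers are disjoint (which is exactly what your half-interval and continuity argument unwinds), and $f(C)=C$ is deduced from $f(H_y)=H_{\widehat{d}(y)}$ together with monotonicity of $f$ moving endpoints of $H_y$ to endpoints of $H_{\widehat{d}(y)}$. The only cosmetic differences are that the paper gets total disconnectedness directly from $C\subset I$ (rather than via the nowhere-dense detour) and obtains $f(C)\subset C$ as an immediate consequence of the preimage identity $f^{-1}(H_x)=\bigcup_{y\in\widehat{d}^{-1}(x)} H_y$ rather than via your connectedness-of-$\Pi(U)$ argument — both are sound.
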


\begin{proof}
Every gap of $I$ is contained in a unique $\mathring{H}_x$ since $\Pi$ is constant on it. This shows $C \subset I$. Moreover, $C$ is totally disconnected since $I$ is, and it has no isolated points since distinct $\mathring{H}_x$'s have disjoint closures. This proves that $C$ is a Cantor set. \vs 

To see the $f$-invariance property of $C$, first suppose $f(\theta) \in \mathring{H}_x$ for some $x$. Then by \eqref{aayy} there is a $y \in \whd^{-1}(x)$ such that $\theta \in \mathring{H}_y$. This proves $f(C) \subset C$. To verify the reverse inclusion, suppose $\theta \in C$ and take any $\theta'$ with $f(\theta')=\theta$. If $\theta' \in C$, then $\theta \in f(C)$ and we are done. Otherwise $\theta' \in \mathring{H}_y$ for some $y$. Setting $x:=\whd(y)$, it follows from $f(H_y)=H_x$ that either $H_x = \{ \theta \}$, or $H_x$ is a non-degenerate closed interval having $\theta$ as a boundary point. In either case, monotonicity of $f$ implies that some endpoint of $H_y$ maps to $\theta$, implying $\theta \in f(C)$. This proves $C \subset f(C)$. 
\end{proof}

\begin{remark}
It is easy to see that $C$ is the maximal Cantor subset of $I$. In fact, if $X$ is any Cantor subset of $I$ not contained in $C$, then some gap of $C$ would contain uncountably many points of $X$. All such points would have the same image under $\Pi$, which would contradict finiteness of the fibers of $\Pi$ in $I$ (\thmref{C}). Alternatively, $C$ can be characterized as the set $I^{\ast}$ of all ``condensation points'' of $I$, that is, the set of all $\theta \in I$ such that every neighborhood of $\theta$ contains uncountably many points of $I$. This follows from the theorem of Cantor-Bendixson according to which $I^{\ast}$ is perfect and $I \sm I^{\ast}$ is at most countable. 
\end{remark}

The above invariance shows that the commutative diagram \eqref{scj} restricts to the Cantor set $C$: 
$$
\begin{tikzcd}[column sep=small]
C \arrow[d,swap,"\Pi"] \arrow[rr,"\whD^{\circ k}"] & & C \arrow[d,"\Pi"] \\
\TT \arrow[rr,"\whd"] & & \TT 
\end{tikzcd} 
$$
Since the closures of gaps of $C$ are precisely the non-degenerate fibers of $\Pi$, it follows that the analog of \lemref{gapmap} holds for $C$, that is, for each gap $]a,b[$ of $C$, either $\whD^{\circ k}(a)=\whD^{\circ k}(b)$ or $]\whD^{\circ k}(a),\whD^{\circ k}(b)[$ is a gap of $C$. \vs   

The degree $d$ extension $f$ of $\whD^{\circ k}|_I$ also serves as an extension of $\whD^{\circ k}|_C$. The above diagram shows that for any gap $J_0$ of $C$, the image $f(\ov{J_0})$ is a single point in $C$ if $J_0$ is taut, and it is the closure of a gap $J_1$ of $C$ if $J_0$ is loose. Note that in the latter case $f$ maps the closed interval $\ov{J_0}$ onto the closed interval $\ov{J_1}$ monotonically, but it may fail to map $J_0$ onto $J_1$ homeomorphically (for example a whole subinterval of $J_0$ may map to an endpoint of $J_1$). In practice, it is convenient to ignore this issue and abuse the language slightly by saying that $J_0$ ``maps to'' $J_1$, or that $J_1$ is the ``image'' of $J_0$. \vs

Now the same argument as in \corref{gapcount} shows that $C$ has $D^k-d$ major gaps counting multiplicities. Evidently each gap of $I$ is contained in a gap of $C$. For each gap $J_i$ of $C$ of multiplicity $n_i$, let $m_i$ be the number of gaps of $I$ contained in $J_i$ counting multiplicities. Since $\sum_i n_i= \sum_i m_i = D^k-d$ and $0 \leq m_i \leq n_i$, we must have $m_i=n_i$ for all $i$. In other words, {\it every major gap of $C$ of multiplicity $n$ contains precisely $n$ major gaps of $I$ counting multiplicities.}  \vs

Finally, let us observe that every minor gap of $C$ eventually maps to a major gap $J$. If $J$ is taut, the next image is a single point and the gap-orbit terminates. If $J$ is loose, the next image is a new gap (minor or major) and the gap-orbit continues. Since there are only finitely many majors gaps, we conclude that {\it every gap of $C$ eventually maps to a taut gap or a periodic gap}.  

\subsection{Taut gaps of $C$}  

\begin{lemma}\label{partners}
Suppose $a,b \in I$ satisfy $\Pi(a)=\Pi(b)$ and $\whD^{\circ k}(a)=\whD^{\circ k}(b)$. Then either $]a,b[$ or $]b,a[$ is a taut gap of $I$. Assuming the former, $R^P_a=R^-_a$ and $R^P_b=R^+_b$ crash into an escaping critical point $\omega$ of $P^{\circ k}$ and their image $P^{\circ k}(R^-_a) = R_{\whD^{\circ k}(a)}=R_{\whD^{\circ k}(b)}=P^{\circ k}(R^+_b)$ is a smooth ray.  
\end{lemma}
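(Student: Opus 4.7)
The plan is to use the quasiconformal model from \thmref{B'} to force $R^P_a$ and $R^P_b$ to coincide inside $\Omega_1$ and then analyze the geometry at their meeting point. I would choose $\Phi:\Omega_0\to\Omega'_0$ as in the proof of \thmref{B'}, conjugating $P^{\circ k}|_{\Omega_1}$ to $Q_d|_{\Omega'_1}$. Writing $\tau:=\Pi(a)=\Pi(b)$ and $a':=\whD^{\circ k}(a)=\whD^{\circ k}(b)$, \thmref{B'} says that $\Phi(R^P_a\cap\Omega_1)$ and $\Phi(R^P_b\cap\Omega_1)$ both land at $\exp(2\pi i\tau)$, while both project under $Q_d$ onto the arc $\Phi(R^P_{a'}\cap\Omega_0)$. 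Since $Q_d:\Omega'_1\to\Omega'_0$ is an unramified degree-$d$ covering, its preimage of $\Phi(R^P_{a'}\cap\Omega_0)$ splits into $d$ pairwise disjoint arcs, each a homeomorphic lift landing at a distinct $d$-th root of $\exp(2\pi i\whd(\tau))$. The two arcs in question share their landing point and each is a bijective preimage (as $P^{\circ k}$ is injective along any ray), so they must coincide with the same preimage component; pulling back by $\Phi^{-1}$ yields $R^P_a\cap\Omega_1=R^P_b\cap\Omega_1$ as sets.
\vs

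Next I would locate the meeting point. Because the rays coincide on $\Omega_1$ but have distinct angles $a\neq b$ at infinity, there is a largest potential $s^{\ast}$ at which they still agree, and the common point $\omega:=R^P_a(s^{\ast})=R^P_b(s^{\ast})$ must be a critical point of $G$ (else both trajectories would be the unique smooth field line through $\omega$ and could not diverge at nearby higher potentials). Two distinct incoming field lines at $\omega$ (along angles $a$ and $b$) are sent by $P^{\circ k}$ to the single incoming of $R^P_{a'}$ at $P^{\circ k}(\omega)$, forcing $P^{\circ k}$ to be ramified at $\omega$; hence $\omega$ is a critical point of $P^{\circ k}$, and since every critical point of the polynomial-like restriction $P^{\circ k}|_{V_{s_0}}$ lies in $K$ while $\omega$ is escaping, $\omega\notin V_{s_0}$. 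For the two rays to exit $\omega$ along a common outgoing $\alpha$, $\alpha$ must be simultaneously the immediate-left neighbor of incoming-$a$ and the immediate-right neighbor of incoming-$b$, forcing $a,\alpha,b$ to appear consecutively in the cyclic order at $\omega$ and yielding $R^P_a=R^-_a$, $R^P_b=R^+_b$. Because the sphere's orientation at $\omega$ is opposite to that at infinity, the local sector at $\omega$ containing $\alpha$ (and hence the shared descent toward $K$) corresponds at infinity to the angular arc $]b,a[$, leaving $]a,b[$ as the candidate taut gap.
\vs

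To verify that $]a,b[$ is actually a gap of $I$---the tautness then following from $D^k(b-a)\in\ZZ_{>0}$---I would form on the sphere the Jordan curve $\gamma$ consisting of the upper portions of $R^P_a$ and $R^P_b$ (from $\infty$ to $\omega$) together with $\{\infty,\omega\}$; it bounds two Jordan domains, and I let $W$ be the one with angular sector $]a,b[$ at infinity. By the orientation above, the shared descent enters the opposite domain, which therefore contains the connected continuum $K$ (since $\gamma\sm\{\infty,\omega\}\subset\CC\sm K_P$ is disjoint from $K$); hence $W\cap K=\emptyset$. For any $c\in]a,b[$, the ray $R^P_c$ starts at infinity in $W$; using that distinct smooth field lines of $\nabla G$ in $\CC\sm K_P$ do not cross except at critical points of $G$, that $c$ is not an incoming angle at $\omega$ on the $W$-side (the only ones there are $a,b$), and that any critical point of $G$ interior to $W$ has its entire local field-line structure inside $W$, the curve $R^P_c$ remains trapped in $\overline W$ and cannot accumulate on $\bd K$. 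Hence $c\notin I$. Finally, coincidence of $R^-_a$ and $R^+_b$ below $\omega$ forces $P^{\circ k}(R^-_a)$ and $P^{\circ k}(R^+_b)$ to coincide as sets below $P^{\circ k}(\omega)$; if $a'\in{\mathcal N}$, iterating the ray identities of \S\ref{gr} would make these two images the distinct broken rays $R^-_{a'}\neq R^+_{a'}$, a contradiction. Therefore $a'\notin{\mathcal N}$ and $P^{\circ k}(R^-_a)=R_{a'}$ is smooth.
\vs

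The principal obstacle will be the trapping argument in the third paragraph: a broken ray $R^P_c$ with $c\in]a,b[$ could in principle reach a critical point of $G$ lying on $\gamma$---either $\omega$ itself or a crash point along the upper portion of $R^P_a$ or $R^P_b$ in case either is broken---and then turn into the forbidden component. A careful case analysis, weighing each such potential encounter against the left/right turning rules and the constraint that $c$ is not an incoming angle for $\omega$ on the $W$-side, is needed to confirm that $R^P_c$ never escapes $\overline W$.
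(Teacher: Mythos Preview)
Your covering--lift argument in the first paragraph is correct and is a genuinely different route from the paper's.  The paper does not show $R^P_a\cap\Omega_1=R^P_b\cap\Omega_1$ directly; instead it argues by contradiction, assuming the two rays are disjoint and invoking \corref{samefiber}(ii) to see that $R^P_a(s)$ and $R^P_b(s)$ stay at bounded hyperbolic distance.  Since $\whD^{\circ k}(a)=\whD^{\circ k}(b)$, the points $R^P_a(s),R^P_b(s)$ have the same image under $P^{\circ k}$, so every accumulation point of the rays on $\bd K$ would be a non-injective point for $P^{\circ k}$, hence a critical point; connectedness then forces a common landing point $c\in\bd K$, and the last clause of \corref{samefiber} (one complementary component of $R^P_a\cup R^P_b\cup\{c\}$ is disjoint from $K$) gives the contradiction.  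Your lift argument is cleaner in that it yields coincidence of the rays in $\Omega_1$ outright, with no contradiction step; the paper's route, on the other hand, avoids having to verify that the $d$ lifts land at \emph{distinct} $d$-th roots (which you need and which does follow from the boundary extension of $Q_d$, though you pass over it quickly).

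The real divergence is in your third paragraph.  The trapping argument you propose---confining $R^P_c$ to the Jordan domain $W$ for every $c\in{]a,b[}$---is exactly the delicate point you flag, and the paper sidesteps it entirely.  Once one knows that $R^-_a$ and $R^+_b$ crash into a common critical point $\omega$ of the Green's function at potential $s^*$, the paper simply quotes the structural fact already recorded in \S\ref{conI}: if $a,b\in I_s$ and $R^-_a,R^+_b$ crash at a potential $\geq s$, then $]a,b[$ is a gap of $I_s$.  Applying this for every $s<s^*$ gives $]a,b[\cap I=\es$ immediately, with no case analysis of how a third ray might interact with critical points on $\gamma$.  So the obstacle you identify is genuine for your chosen route, but it dissolves if you replace the trapping argument by this one-line appeal to the ray-pair/gap correspondence of \S\ref{conI}.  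With that substitution your proof is complete; the paper's argument for smoothness of the image ray is essentially the same as yours.
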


Note that ray pairs in $I$ such as the above $R^-_a, R^+_b$ are ``permanent partners'' in the sense that they stay together once they join: $R^-_a(s)=R^+_b(s)$ for all potentials $s \leq G(\omega)$. 

\begin{proof}
Assume by way of contradiction that $R^P_a, R^P_b$ are disjoint. By \corref{samefiber} the distinct points $R^P_a(s),R^P_b(s)$ have bounded hyperbolic distance in $\CC \sm K$ and therefore their Euclidean distance tends to $0$ as $s \to 0$. Let $z \in \bd K$ be any accumulation point of $R^P_a$ and take a sequence of potentials $s_j \to 0$ such that $R^P_a(s_j) \to z$. Then $R^P_b(s_j) \to z$ also. The assumption $\whD^{\circ k}(a)=\whD^{\circ k}(b)$ implies that the points $R^P_a(s_j),R^P_b(s_j)$ have the same image under $P^{\circ k}$, so $P^{\circ k}$ is not locally injective near $z$. This shows that the common accumulation set of $R^P_a, R^P_b$ consists only of critical points of $P^{\circ k}$. Since the accumulation set of a ray is connected and $P^{\circ k}$ has finitely many critical points, it follows that $R^P_a, R^P_b$ co-land at a critical point $c$ of $P^{\circ k}$ on $\bd K$. This implies that $K$ meets both components of $\CC \sm (R^P_a \cup R^P_b \cup \{ c \})$, contradicting \corref{samefiber}. \vs

The preceding paragraph shows that $R^P_a, R^P_b$ crash into a critical point $\omega$ of the Green's function. Assuming $R^P_a = R^-_a$ and $R^P_b=R^+_b$, this implies that $]a,b[$ is a gap of $I$. Moreover, the image rays $P^{\circ k}(R^-_a)$ and $P^{\circ k}(R^+_b)$ both accumulate on $\bd K$ and have the same angle $\whD^{\circ k}(a)=\whD^{\circ k}(b)$. Hence the image rays coincide and are smooth and $\omega$ is in fact a critical point of $P^{\circ k}$. 
\end{proof}

\begin{lemma}\label{tCtI}
Taut gaps of $C$ and all their preimages are gaps of $I$. 
\end{lemma}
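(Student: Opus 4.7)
Plan. I would prove both assertions together by induction on $n \geq 0$, where $n$ is such that $f^n(J)$ is a taut gap of $C$, using a \emph{strengthened} inductive hypothesis: not only is $J$ a gap of $I$, but both endpoints of $J$ lie in $\mathcal N$ (the set of angles whose rays crash). The Partners Lemma (\lemref{partners}) drives both the base case and the inductive contradiction; \lemref{sts}(iii) propagates $\mathcal N$-membership backward.

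Base case $n = 0$. Let $J = ]a, b[$ be a taut gap of $C$. Its endpoints $a, b$ lie in $C \subset I$ and in a common fiber of $\Pi$, so $\Pi(a) = \Pi(b)$, and tautness gives $\whD^{\circ k}(a) = \whD^{\circ k}(b)$. By \lemref{partners}, one of $]a, b[$ or $]b, a[$ is a taut gap of $I$; the latter would force $I \subset [a, b]$ and hence $C \subset \{a, b\}$, contradicting that $C$ is a Cantor set. So $]a, b[$ is a gap of $I$. The same lemma also records that $R^P_a = R^-_a$ and $R^P_b = R^+_b$ crash at an escaping critical point of $P^{\circ k}$, so $a, b \in \mathcal N$.

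Inductive step. Assume the statement for $n$, and let $J = ]\alpha, \beta[$ be a gap of $C$ with $f^{n+1}(J)$ taut. Set $J_1 := f(J) = ]a_1, b_1[$; then $f^n(J_1)$ is taut, so by the hypothesis $J_1$ is a gap of $I$ and $a_1, b_1 \in \mathcal N$. Relabel so that $\whD^{\circ k}(\alpha) = a_1$ and $\whD^{\circ k}(\beta) = b_1$.

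Suppose for contradiction there is $\gamma \in ]\alpha, \beta[ \cap I$. Since $\overline J$ is a fiber of $\Pi$, $\Pi(\gamma) = \Pi(\alpha)$. Monotonicity of $f$ on $\overline J$ combined with $f|_I = \whD^{\circ k}|_I$ puts $\whD^{\circ k}(\gamma) \in \overline{J_1} \cap I = \{a_1, b_1\}$ (the latter equality because $J_1$ is a gap of $I$). Without loss of generality $\whD^{\circ k}(\gamma) = a_1 = \whD^{\circ k}(\alpha)$ (the case $\whD^{\circ k}(\gamma) = b_1$ is symmetric, applying \lemref{partners} to $(\gamma, \beta)$). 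Now \lemref{partners} applies to $(\alpha, \gamma)$ and forces one of $]\alpha, \gamma[$ or $]\gamma, \alpha[$ to be a taut gap of $I$; the latter is excluded since it contains $\beta \in I$. Crucially, the same lemma asserts that the image $P^{\circ k}(R^-_\alpha) = R_{a_1}$ is a \emph{smooth} ray, i.e., $a_1 \notin \mathcal N$. This contradicts the inductive hypothesis $a_1 \in \mathcal N$, so no such $\gamma$ exists and $J$ is a gap of $I$.

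For the endpoint assertion, iterating \lemref{sts}(iii) gives $s_{a_1} = s_{\whD^{\circ k}(\alpha)} \leq D^k s_\alpha$, so $a_1 \in \mathcal N$ (i.e., $s_{a_1} > 0$) forces $s_\alpha > 0$, i.e., $\alpha \in \mathcal N$; similarly $\beta \in \mathcal N$. This completes the induction.

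Main obstacle. The chief conceptual step is choosing the right inductive hypothesis. A naive induction asserting only that $J$ is a gap of $I$ does not close, because the level-$k$ contradiction requires the strictly stronger fact $a_1 \in \mathcal N$ — merely knowing that $J_1$ is a gap of $I$ (e.g., a loose one deep in the preimage tower) does not suffice to force the image of $R^-_\alpha$ under $P^{\circ k}$ to disagree with any existing structure. Strengthening the induction to propagate $\mathcal N$-membership of gap endpoints along $f$-preimages resolves this: \lemref{partners} supplies it in the base case, and \lemref{sts}(iii) carries it backward at each inductive step.
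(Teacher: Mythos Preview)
Your proof is correct and follows essentially the same route as the paper's. The paper picks the \emph{largest} index $i$ in the gap-orbit $J_0 \to \cdots \to J_n$ with $J_i \cap I \neq \emptyset$, applies \lemref{partners} there, and then iterates forward to make $R^P_{a_n}$ smooth (contradicting $R^P_{a_n}=R^-_{a_n}$); your explicit induction with the strengthened hypothesis that the endpoints lie in ${\mathcal N}$ is exactly the backward repackaging of that forward-iteration step.
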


\begin{proof}
Let $J = ]a,b[$ be a gap of $C$ which after $n \geq 1$ iterates maps to a taut gap of $C$. For $i \geq 0$ set $a_i:=\whD^{\circ ik}(a), b_i:=\whD^{\circ ik}(b)$ and $J_i:=]a_i, b_i[$, so we have the gap-orbit $J_0 \to J_1 \to \cdots \to J_n$ under $f$, with $J_n$ taut. We will show that $J_i \cap I=\es$ for all $0 \leq i \leq n$. \vs

By \lemref{partners} $R^-_{a_n}, R^+_{b_n}$ form a ray pair in $I$, so $J_n \cap I = \es$. Assume by way of contradiction that there is a largest $0 \leq i \leq n-1$ for which $J_i \cap I \neq \es$. Take $\theta \in J_i \cap I$. Then $f(\theta)=\whD^{\circ k}(\theta) \in \ov{J_{i+1}} \cap I$, so by the choice of $i$, $f(\theta)$ must be one of the endpoints of $J_{i+1}$, say $a_{i+1}$ (the case $f(\theta)=b_{i+1}$ is similar). Since $\whD^{\circ k}(\theta) = \whD^{\circ k}(a_i)=a_{i+1}$ and $\Pi(\theta)=\Pi(a_i)$, \lemref{partners} shows that $R^-_{a_i}, R^+_{\theta}$ form a ray pair in $I$ and their image $R^P_{a_{i+1}}$ is smooth. Applying the iterate $P^{\circ (n-i-1)k}$, it follows that $R^P_{a_n}$ is smooth. This contradicts the fact that $R^P_{a_n}=R^-_{a_n}$ is left broken. 
\end{proof}

\subsection{Periodic gaps of $C$.}\label{pergp}

Let us begin by recalling a known relationship between periodic angles in $I$ and periodic points on the boundary of the component $K$. For $z \in \bd K$, it will be convenient to use the notation 
$$
\La(z):= \{ \theta \in I: R^P_\theta \ \text{lands at} \ z \}.
$$

\begin{theorem}\label{perr}
\mbox{}
\begin{enumerate}
\item[(i)]
If $\theta \in I$ is periodic under $\whD^{\circ k}$, then $\theta \in \La(z_0)$ for some $z_0 \in \bd K$ which is a repelling or parabolic periodic point under $P^{\circ k}$. \vs
\item[(ii)]
Conversely, if $z_0 \in \bd K$ is a repelling or parabolic periodic point under $P^{\circ k}$, then $\La(z_0)$ is non-empty and finite. Moreover, all angles in $\La(z_0)$ are periodic with the same period under $\whD^{\circ k}$.    
\end{enumerate}
\end{theorem}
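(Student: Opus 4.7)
The plan is to reduce both parts to the classical Douady--Hubbard landing theorem for the polynomial $Q$ (whose filled Julia set $K_Q$ is connected; see \cite{M1}), transported via the hybrid conjugacy $\varphi$ of \S\ref{plm} and the semiconjugacy $\Pi: I \to \TT$ of Theorems \ref{A} and \ref{B}. The key dictionary is: by the commutative diagram \eqref{scj}, $\theta \in I$ is periodic under $\whD^{\circ k}$ if and only if $\Pi(\theta)$ is periodic under $\whd$; and by \thmref{B}, $R^P_\theta$ lands at $z \in \bd K$ if and only if $R^Q_{\Pi(\theta)}$ lands at $\varphi(z) \in \bd K_Q$.

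For (i), let $\theta \in I$ be periodic of period $q$ under $\whD^{\circ k}$ and set $\tau := \Pi(\theta)$. Then $\whd^{\circ q}(\tau) = \tau$, so the classical landing theorem yields that the smooth $Q$-ray $R^Q_\tau$ lands at a point $w_0 \in \bd K_Q$ which is repelling or parabolic under $Q$. By \thmref{B}, the ray $R^P_\theta$ lands at $z_0 := \varphi^{-1}(w_0) \in \bd K$. Since $\varphi$ is a hybrid equivalence with $\bar{\bd}\varphi = 0$ a.e.\ on $K$, the standard fact that such conjugacies preserve multipliers at periodic points interior to $K_f$ gives that $z_0$ is repelling or parabolic under $P^{\circ k}$.

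For (ii), set $w_0 := \varphi(z_0)$, a repelling or parabolic $Q$-periodic point on $\bd K_Q$. The classical theorem furnishes a non-empty finite set
\[
\La_Q(w_0) := \{ \tau \in \TT : R^Q_\tau \ \text{lands at} \ w_0 \}
\]
of $\whd$-periodic angles sharing a common period. By \thmref{B}, $\La(z_0) = \Pi^{-1}(\La_Q(w_0))$, and this is finite by \thmref{C}. To establish common periodicity, let $n$ denote the $P^{\circ k}$-period of $z_0$ and set $F := P^{\circ kn}$, so $F(z_0) = z_0$. Since $z_0$ is repelling or parabolic, $F$ is a local homeomorphism at $z_0$, and therefore injectively permutes the germs at $z_0$ of the rays in $\La(z_0)$. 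This yields an injection $\whD^{\circ kn} : \La(z_0) \to \La(z_0)$ which, being a self-map of a finite set, is a bijection; hence every $\theta \in \La(z_0)$ is $\whD^{\circ kn}$-periodic. The common period assertion then follows because $\whD^{\circ k}$ cyclically permutes the disjoint sets $\La(z_0), \La(P^{\circ k}(z_0)), \ldots, \La(P^{\circ (n-1)k}(z_0))$ compatibly with the $P^{\circ k}$-orbit of $z_0$.

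The main subtlety is the injectivity of $\whD^{\circ kn}$ on $\La(z_0)$: since some rays in $\La(z_0)$ can be broken and may pull forward onto smooth rays, the global action of $\whD^{\circ kn}$ on $\TT$ is not visibly injective on this finite set. The resolution is purely local at $z_0$: every critical point of the Green's function is an escaping precritical point of $P$, while $z_0 \in \bd K$ has a bounded orbit, so each ray in $\La(z_0)$ has a smooth germ landing at $z_0$. Since $F$ is a local homeomorphism at the non-critical point $z_0$, distinct such germs are sent to distinct germs, which gives the required injectivity on angles.
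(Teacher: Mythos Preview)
Your overall strategy for part (ii)---transfer to $Q$ via \thmref{B} and invoke the classical landing theorem there---is exactly the paper's approach. However, there is a genuine circularity: you appeal to \thmref{C} to conclude that $\La(z_0)=\Pi^{-1}(\La_Q(w_0))$ is finite, but in this paper \thmref{C} is proved \emph{after} \thmref{perr}, and its proof (via \corref{cardper} and the sector machinery in \S\ref{pergp}) explicitly uses both parts of \thmref{perr}. The paper avoids this by deducing finiteness directly: $\La(z_0)$ is compact and invariant under the expanding map $\whD^{\circ k\ell}$, so \cite[Lemma 18.8]{M1} applies. Your own local injectivity argument (germs at $z_0$) could be combined with that lemma to get finiteness without \thmref{C}; as written, though, the argument is circular.

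Two smaller gaps. In part (i), you justify that $z_0$ is repelling or parabolic by citing ``multiplier preservation at periodic points interior to $K_f$,'' but $z_0\in\bd K$, not the interior, so this does not apply. The paper instead invokes the Snail Lemma/hyperbolic-metric argument directly on the periodic ray $R^P_\theta$ (equivalently, one can argue as in the paper's proof of (ii) that ``repelling or parabolic'' is preserved under topological conjugacy). In part (ii), your argument that ``$\whD^{\circ k}$ cyclically permutes the sets $\La(P^{\circ jk}(z_0))$'' only shows each period is a multiple of $n$; it does not force a \emph{common} period. The paper obtains this from the extra observation that $P^{\circ k\ell}$, being conformal near $z_0$, preserves the cyclic order of the ray-ends, so $\whD^{\circ k\ell}$ preserves the cyclic order of $\La(z_0)$---and an order-preserving bijection of a finite cyclically ordered set has all cycles of the same length. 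Finally, note that distinct angles in $\La(z_0)$ can determine the \emph{same} germ at $z_0$ (ray pairs merging at an escaping critical point), so ``distinct germs $\Rightarrow$ distinct germs'' under $F$ does not by itself yield injectivity of $\whD^{\circ kn}$ on \emph{angles}; the paper works with ends throughout and only passes to angles after establishing periodicity of ends.
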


Observe that in (ii), periodicity of the rays landing at $z_0$ implies that they are either smooth or infinitely broken (in particular, these rays are pairwise disjoint). Thus, each cycle of rays landing at $z_0$ consists either entirely of smooth rays, or entirely of infinitely broken rays.  \vs 

This result is not new: Part (i), the landing of periodic rays on repelling or parabolic points, follows from a classical application of hyperbolic metrics introduced by Sullivan, Douady and Hubbard. For proofs in the case of connected filled Julia sets see for example \cite[Theorem 18.10]{M1} or \cite[Proposition 2.1 and its Complement]{P}. Part (ii) appears in \cite{LP} in a more general setting which also covers the degenerate case $K = \{ z_0 \}$. Here we give an alternative proof based on \thmref{B} and the corresponding statement in the connected case (see also \cite[Corollary B.3]{P}).    

\begin{proof}[Proof of part (ii)]
Let $\varphi$ be a hybrid equivalence between the restriction of $P^{\circ k}$ to a neighborhood of $K$ and a monic degree $d$ polynomial $Q$, and choose the semiconjugacy $\Pi: I \to \TT$ such that \thmref{B} holds. Let $\ell$ be the period of $z_0$ under $P^{\circ k}$. 
Then $w_0:=\varphi(z_0) \in \bd K_Q$ has period $\ell$ under $Q$ and is repelling or parabolic since this property is preserved under topological conjugacies. By \cite[Corollary B.1]{P}, the set $\La(w_0):=\{ \tau \in \TT: R^Q_\tau \ \text{lands at} \ w_0 \}$ is non-empty and finite. 
By \thmref{B}, $\La(z_0)=\Pi^{-1}(\La(w_0))$ and the following diagram commutes:
\begin{equation}\label{cdlala}
\begin{tikzcd}[column sep=small]
\La(z_0) \arrow[d,swap,"\Pi"] \arrow[rr,"\whD^{\circ k\ell}"] & & \La(z_0) \arrow[d,"\Pi"] \\
\La(w_0) \arrow[rr,"\whd^{\circ \ell}"] & & \La(w_0) 
\end{tikzcd} 
\end{equation}  
Since $\La(z_0)$ is compact and invariant under the expanding map $\whD^{\circ k\ell}$, we conclude that $\La(z_0)$ must also be finite (\cite[Lemma 18.8]{M1}). There is a neighborhood of $z_0$ in which $P^{\circ k\ell}$ is a conformal isomorphism since $(P^{\circ k\ell})'(z_0) \neq 0$. It follows that $P^{\circ k\ell}$ is injective on the set of ``ends'' of the rays that land at $z_0$. Since there are finitely many such ends, they must be permuted by $P^{\circ k\ell}$ and therefore they are all periodic. This of course implies periodicity of the whole rays that land at $z_0$, and proves that each angle in $\La(z_0)$ is periodic under $\whD^{\circ k\ell}$. The fact that $P^{\circ k\ell}$ is a conformal isomorphism near $z_0$ implies that it preserves the cyclic order of ray ends landing at $z_0$, so $\whD^{\circ k\ell}$ preserves the cyclic order of angles in $\La(z_0)$. A standard exercise then shows that all angles in $\La(z_0)$ must have the same period. 
\end{proof} 

We can say a bit more about the correspondence between rays landing at $z_0$ and those landing at $w_0$. The external rays of $Q$ landing at $w_0$ fall into some number $N \geq 1$ of cycles under $Q^{\circ \ell}$ which have the same length $q \geq 1$ and the same {\bit combinatorial rotation number} $p/q$ (compare \cite[Corollary B.1]{P}). This means that if we label the angles in $\La(w_0)$ as $0 \leq \tau_1 < \tau_1 < \cdots < \tau_{Nq} <1$, then $\whd^{\circ \ell}$ acts on $\La(w_0)$ as $\tau_j \mapsto \tau_{j+Np}$, taking the subscripts modulo $Nq$. Thus, the action of $Q^{\circ \ell}$ on the rays landing at $w_0$ combinatorially mimics that of the rational rotation $z \mapsto e^{2\pi i p/q} z$. \vs

We claim that the cycles of $\whD^{\circ k \ell}$ in $\Lambda(z_0)$ are also of the common length $q$ and combinatorial rotation number $p/q$. To see this, let $\tau, \tau':=\whd^{\circ \ell}(\tau) \in \Lambda(w_0)$ so $\whD^{\circ k\ell}$ maps $\Pi^{-1}(\tau)$ bijectively onto $\Pi^{-1}(\tau')$. Label the elements of these fibers as
$$
\Pi^{-1}(\tau)=\{ \theta_1, \ldots, \theta_{\nu} \} \qquad \text{and} \qquad \Pi^{-1}(\tau')= \{ \theta'_1, \ldots, \theta'_{\nu} \}  
$$ 
in counterclockwise order. Taking the conformal isomorphism $\zeta: \CC \sm K \to \CC \sm \ov{\DD}$, it follows from \corref{samefiber} that the arcs $\zeta(R^P_{\theta_1}), \ldots, \zeta(R^P_{\theta_{\nu}})$ co-land at some $u \in \bd \DD$, and similarly the arcs $\zeta(R^P_{\theta'_1}), \ldots, \zeta(R^P_{\theta'_{\nu}})$ co-land at some $u' \in \bd \DD$. The conjugate map $f:= \zeta \circ P^{\circ k \ell} \circ \zeta^{-1}$ extends by reflection to a conformal isomorphism $f:\Omega \to \Omega'$ between annular neighborhoods of $\bd \DD$ which sends $u$ to $u'$, preserves $\bd \DD$ and maps each arc $\zeta(R^P_{\theta_i}) \cap \Omega$ to some arc $\zeta(R^P_{\theta'_j}) \cap \Omega'$. The fact that $f$ is orientation-preserving then implies that $f(\zeta(R^P_{\theta_i}) \cap \Omega)=\zeta(R^P_{\theta'_i}) \cap \Omega'$, or $P^{\circ k\ell}(R^P_{\theta_i})=R^P_{\theta'_i}$, or $\whD^{\circ k\ell}(\theta_i)=\theta'_i$ for every $1 \leq i \leq \nu$. Applying this $q$ times, we conclude that the iterate $\whD^{\circ k \ell q}$ must act as the identity on the fiber $\Pi^{-1}(\tau)$. It easily follows that each cycle of $\whD^{\circ k \ell}$ in $\Lambda(z_0)$ has length $q$ and combinatorial rotation number $p/q$. \vs

We summarize the above observations in the following 

\begin{corollary}
Let $z_0 \in \bd K$ be a repelling or parabolic point of period $\ell$ under $P^{\circ k}$ and $w_0=\varphi(z_0) \in \bd K_Q$ be the corresponding periodic point of $Q$. Let $N \geq 1$ be the number of cycles of $\whd^{\circ \ell}$ in $\La(w_0)$ with the common length $q \geq 1$ and combinatorial rotation number $p/q$. Take representative angles $\tau_1, \ldots, \tau_N$ for these cycles and let $\nu_j$ be the number of elements in $\Pi^{-1}(\tau_j)$. Then, there are precisely $\sum_{j=1}^N \nu_j$ cycles of $\whD^{\circ k\ell}$ in $\La(z_0)$ and they all have the same length $q$ and combinatorial rotation number $p/q$. 
\end{corollary}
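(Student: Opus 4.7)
The plan is to combine the element-wise fiber correspondence established in the paragraph preceding the corollary with the commutative diagram \eqref{cdlala}, then carry out a bookkeeping argument to count cycles and recover the rotation number. First I would use the identification $\La(z_0) = \Pi^{-1}(\La(w_0))$ supplied by \thmref{B} to decompose
\[
\La(z_0) = \bigsqcup_{\tau \in \La(w_0)} \Pi^{-1}(\tau),
\]
so the total count is $\sum_{\tau} \# \Pi^{-1}(\tau)$. The key preliminary observation is that $\# \Pi^{-1}(\tau)$ is constant along each $\whd^{\circ \ell}$-cycle in $\La(w_0)$: indeed, the argument with the conformally extended map $f = \zeta \circ P^{\circ k\ell} \circ \zeta^{-1}$ already given shows that $\whD^{\circ k\ell}$ is an order-preserving bijection from $\Pi^{-1}(\tau)$ onto $\Pi^{-1}(\whd^{\circ \ell}(\tau))$. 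Hence every fiber over the cycle through $\tau_j$ has cardinality $\nu_j$, and $\# \La(z_0) = \sum_{j=1}^N q \, \nu_j$.

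Next I would count cycles. The same paragraph above the corollary shows that $(\whD^{\circ k\ell})^{\circ q}$ acts as the identity on each fiber $\Pi^{-1}(\tau_j)$, so every element of $\Pi^{-1}(\tau_j)$ is periodic under $\whD^{\circ k\ell}$ with period dividing $q$. A period $q' < q$ is impossible, for then the $\Pi$-image $\tau_j$ would have period $q'$ under $\whd^{\circ \ell}$, contradicting minimality of $q$ in $\La(w_0)$. Moreover, two distinct elements of $\Pi^{-1}(\tau_j)$ cannot lie in the same $\whD^{\circ k\ell}$-cycle: if they did, their $\Pi$-images in $\La(w_0)$ would trace out the cycle of $\tau_j$ simultaneously and both would equal $\tau_j$ at every iterate, forcing the cycle in $\La(z_0)$ to have length strictly less than $q$. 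Therefore $\Pi^{-1}(\tau_j)$ meets each cycle of $\whD^{\circ k\ell}$ it touches in exactly one point, and contributes exactly $\nu_j$ distinct cycles, all of length $q$. Summing gives $\sum_{j=1}^N \nu_j$ cycles, which together exhaust all of $\La(z_0)$ since $\sum_j q\,\nu_j = \#\La(z_0)$.

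Finally, for the combinatorial rotation number, fix a cycle $\mathcal{O}$ of $\whD^{\circ k\ell}$ in $\La(z_0)$ of length $q$. Listing its elements in counterclockwise cyclic order $\theta_1, \ldots, \theta_q$, the restriction $\Pi|_{\mathcal{O}}$ is an order-preserving bijection onto a $\whd^{\circ \ell}$-cycle in $\La(w_0)$ (degree-$1$ monotonicity of $\Pi$ combined with the one-to-one fiber intersection just established). Under this bijection, $\whD^{\circ k\ell}$ on $\mathcal{O}$ is conjugated to $\whd^{\circ \ell}$ on the image, which by hypothesis has combinatorial rotation number $p/q$. Hence $\mathcal{O}$ also has combinatorial rotation number $p/q$.

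The main obstacle is the second paragraph: one must rule out that two points of a single fiber $\Pi^{-1}(\tau_j)$ could collapse into one $\whD^{\circ k\ell}$-cycle of length $q$, which requires using both that the fibers travel rigidly along cycles and that the length $q$ in $\La(w_0)$ is minimal. Once this uniqueness is secured, the counting and the rotation-number transfer are straightforward consequences of monotonicity of $\Pi$ and the diagram \eqref{cdlala}.
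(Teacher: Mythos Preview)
Your proposal is correct and follows the same route as the paper: the corollary is presented there as a summary of the paragraph preceding it, and you are simply spelling out the ``it easily follows'' step using the index-preserving property $\whD^{\circ k\ell}(\theta_i)=\theta'_i$, the semiconjugacy \eqref{cdlala}, and monotonicity of $\Pi$. One slip to fix: in your second paragraph, the contradiction from having two fiber elements $\theta,\theta'\in\Pi^{-1}(\tau_j)$ in the same cycle is that $\tau_j$ would acquire a $\whd^{\circ\ell}$-period strictly less than $q$ (as you correctly note in your closing paragraph), not that ``the cycle in $\La(z_0)$'' would have length less than $q$.
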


We now return to our discussion of the periodic gaps of the maximal Cantor set $C$. Suppose $]a,b[$ is a periodic gap of $C$. Then $a,b$ are periodic under $\whD^{\circ k}$ so by \thmref{perr}(i) and \thmref{B} the rays $R^P_a, R^P_b$ co-land at a periodic point $z_0 \in \bd K$. Another application of \thmref{B} then shows that for every $\theta \in ]a,b[ \, \cap I$ the ray $R^P_\theta$ lands at the same $z_0$. Using the notations in the above corollary, it follows that $[a,b] \cap I$ is a fiber of $\Pi$ in $\Lambda(z_0)$, so $\# ([a,b] \cap I) = \nu_j$ for some $1 \leq j \leq N$. Thus, to bound the cardinality of $[a,b] \cap I$, we need to find an upper bound for the $\nu_j$. \vs

It is known that the number of distinct cycles of external rays that land on a periodic orbit of a polynomial is at most one more than the number of critical values (see \cite{M1} for the quadratic and \cite{K} for the higher degree case\footnote{They prove this bound for smooth rays, but their argument works almost verbatim for the infinitely broken periodic rays since they are all pairwise disjoint.}). This gives the estimate $\sum_{j=1}^N \nu_j \leq D$, which implies $\nu_j \leq D$ for all $1 \leq j \leq N$. However, we need a sharper form of this bound for the proof of \thmref{C}. \vs

Each of the iterated images $K^i:=P^{\circ i}(K)$ is a period $k$ component of the filled Julia set $K_P$. \thmref{A} applied to each $K^i$ gives a compact set $I^i \subset \TT$ consisting of angles $\theta$ such that $R_\theta$ or one of $R^{\pm}_{\theta}$ accumulates on $\bd K^i$, and a surjection $\Pi^i: I^i \to \TT$ that semiconjugates $\whD^{\circ k}|_{I^i}$ to $\whd$. For $\theta \in I^i$, we denote by $R^P_\theta$ the unique external ray of $P$ at angle $\theta$ that accumulates on $\bd K^i$. With the periodic point $z_0 \in \bd K^0$ as above, consider its orbit ${\mathcal O}:= \{ z_0, z_1, \ldots, z_{k \ell -1} \}$ under $P$, so $z_i \in K^i$. For each $0 \leq i \leq k \ell-1$ the $q \sum_{j=1}^N \nu_j$ external rays landing at $z_i$ separate the plane into the same number of open topological disks called the {\bit sectors} based at $z_i$. Two distinct sectors are either disjoint or nested or they contain each other's complements. This is a simple consequence of the fact that distinct rays landing on $\mathcal O$, being smooth or infinitely broken, cannot intersect. Moreover, if a sector contains $z_i$, it contains all but one of the sectors based at $z_i$. The collection of all sectors based at the points of $\mathcal O$ will be denoted by ${\mathcal S}_{\mathcal O}$. \vs 

Let $S(z_i,a,b) \in {\mathcal S}_{\mathcal O}$ denote the sector based at $z_i$ bounded by the rays $R_a^P, R_b^P$, labeled counterclockwise so $S(z_i,a,b)$ contains all field lines $R_\theta$ with $\theta \in ]a,b[$. The map $\whD$ acting on the union $\Lambda(z_0) \cup \cdots \cup \Lambda(z_{k \ell-1})$ induces a {\bit sector map} $\sigma: {\mathcal S}_{\mathcal O} \to {\mathcal S}_{\mathcal O}$ defined by $\sigma(S(z_i, a,b)):= S(z_{i+1},\whD(a),\whD(b))$. Locally near the base points, $\sigma$ is compatible with $P$: There are neighborhoods $U$ of $z_i$ and $V$ of $z_{i+1}$ such that $P:U \to V$ is a conformal isomorphism with $P(S \cap U)=\sigma(S) \cap V$ for every sector $S$ based at $z_i$. Globally $P(S)$ is often different from $\sigma(S)$, but by the monodromy theorem if $\sigma(S)$ does not contain any critical value of $P$, then $S$ does not contain any critical point of $P$, and $P|_S: S \to \sigma(S)$ is a conformal isomorphism. \vs

The external rays that bound a sector can contain critical points of $P$ when they are infinitely broken. The following convention clarifies when such boundary points should be thought of as belonging to the sector. We say that a critical point $\omega$ of $P$ is {\bit attached} to the sector $S(z_i,a,b)$ if one of the following happens: (i) $\omega \in S(z_i,a,b)$, or (ii) $\omega \in R_a^P$ and $R_a^P=R_a^-$, or (iii) $\omega \in R_b^P$ and $R_b^P = R_b^+$. Similarly, we say that a critical value $v$ of $P$ is attached to $S(z_i,a,b)$ if either (i) $v \in S(z_i,a,b)$, or (ii) $v=P(\omega)$ for some critical point $\omega$ attached to the sector $\sigma^{-1}(S(z_i,a,b))$. Thus, a critical value attached to a sector $S$ is either an interior critical value which may or may not have come from an interior critical point of $\sigma^{-1}(S)$, or a boundary critical value coming from a boundary critical point attached to $\sigma^{-1}(S)$.  \vs

We define the {\bit length} of $S=S(z_i,a,b)$ by $|S|:=b-a$ and its {\bit weight} $w(S)$ as the integer part of $D \, |S|$. An application of the argument principle (see \cite{GM} as well as \cite[Theorem 5.10]{Z}) shows that 
\begin{align*}
w(S) = & \ \text{number of critical points of} \ P \ \text{attached to} \ S \  \\
& \ \text{counting multiplicities}.
\end{align*}
The relation 
$$
|\sigma(S)| = D \, |S| - w(S)
$$
shows that if $|\sigma(S)| \leq |S|$, then there must be a critical point of $P$ attached to $S$ and therefore a critical value of $P$ attached to $\sigma(S)$. In particular, {\it the shortest sector in each cycle of $\sigma$ has a critical value of $P$ attached to it.} \vs

We call $S(z_i,a,b)$ an {\bit essential sector} if $\Pi^i(a) \neq \Pi^i(b)$ and a {\bit ghost sector} if $\Pi^i(a)=\Pi^i(b)$. By \corref{samefiber}, $S(z_i,a,b) \cap K^i \neq \es$ if $S(z_i,a,b)$ is an essential sector, while $S(z_i,a,b) \cap K^i= \es$ and $]a,b[$ is a gap of $I^i$ if $S(z_i,a,b)$ is a ghost sector. It follows that $\sigma$ preserves the type of a sector, i.e., $\sigma(S)$ is a ghost sector if and only if $S$ is. \vs

The ghost sectors based at $z_i$ do not meet $K^i$ but they may well contain a component $K^j$ for some $j \not \equiv i \ (\operatorname{mod} k)$. We call a ghost sector {\bit minimal} if it does not meet the union $K^0 \cup \cdots \cup K^{k-1}$. Equivalently, if it does not contain any point of the orbit $\mathcal O$. 

\begin{figure}[t!]
\centering
\begin{overpic}[width=0.8\textwidth]{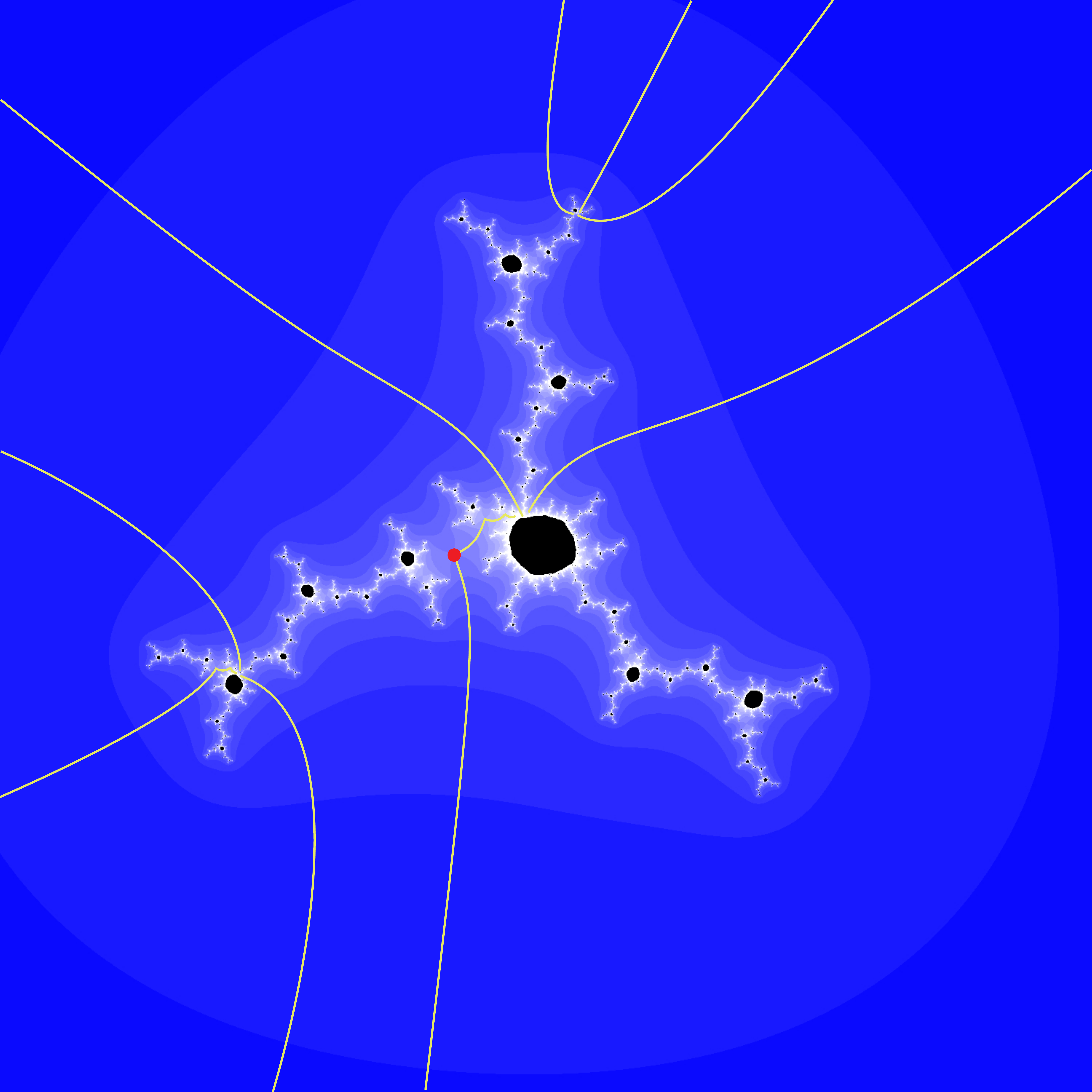}
\put (90,85) {\footnotesize {\color{white} $R_{2/26}$}}
\put (76,96) {\footnotesize {\color{white} $R_{4/26}$}}
\put (63,96) {\footnotesize {\color{white} $R^+_{5/26}$}}
\put (41,96) {\footnotesize {\color{white} $R_{6/26}$}}
\put (1,91) {\footnotesize {\color{white} $R_{10/26}$}}
\put (1,59) {\footnotesize {\color{white} $R_{12/26}$}}
\put (1,24) {\footnotesize {\color{white} $R^+_{15/26}$}}
\put (15,2) {\footnotesize {\color{white} $R_{18/26}$}}
\put (40,2) {\footnotesize {\color{white} $R^+_{19/26}$}}
\put (48,49) {\small {\color{white} $K$}}
\end{overpic}
\caption{\sl{The nine external rays and sectors associated with the period $3$ orbit of the cubic polynomial described in Example \ref{cubicex}. The escaping critical point is shown as a red dot.}} 
\label{ghost1}   
\end{figure}

\begin{example}\label{cubicex}
There is a cubic polynomial with a period $3$ critical point in a component $K$ of the filled Julia set and a period $3$ repelling fixed point $z_0 \in \bd K$ with 
$$
\Lambda(z_0)= \Big\{ \frac{2}{26}, \frac{10}{26}, \frac{19}{26} \Big\}, \quad 
\Lambda(z_1)= \Big\{ \frac{4}{26}, \frac{5}{26}, \frac{6}{26} \Big\}, \quad 
\Lambda(z_2)= \Big\{ \frac{12}{26}, \frac{15}{26}, \frac{18}{26} \Big\}
$$ 
(compare Figures \ref{ghost1} and \ref{ghost2}). Of the nine external rays landing on the orbit ${\mathcal O}=\{ z_0, z_1, z_2 \}$, the three rays $R^+_{19/26}$ (crashing into the escaping critical point), $R^+_{5/26}, R^+_{15/26}$ are infinitely broken and the remaining six are smooth. Of the nine sectors in ${\mathcal S}_{\mathcal O}$, the three sectors
$$
S\Big(z_0, \frac{19}{26}, \frac{2}{26}\Big), \quad S\Big(z_1, \frac{5}{26}, \frac{6}{26}\Big), \quad S\Big(z_2, \frac{15}{26}, \frac{18}{26}\Big)
$$
are essential. The remaining six are ghost sectors, with  
$$
S\Big(z_1, \frac{4}{26}, \frac{5}{26}\Big) \quad \text{and} \quad S\Big(z_2, \frac{12}{26}, \frac{15}{26}\Big)
$$ 
being minimal. 
\end{example}

\begin{figure}[t!]
\centering
\begin{overpic}[width=0.8\textwidth]{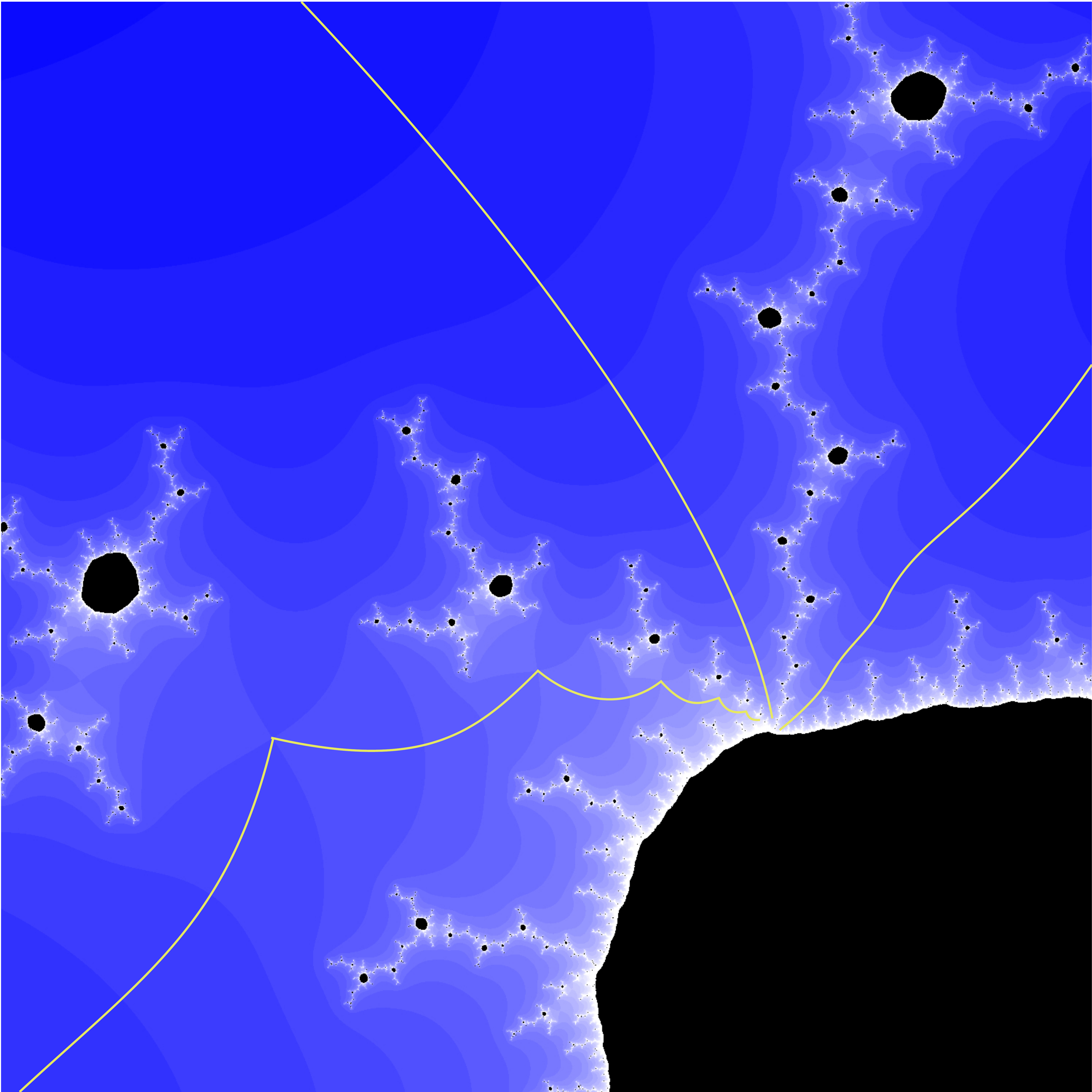}
\put (89,66) {\small {\color{white} $R_{2/26}$}}
\put (33,96) {\small {\color{white} $R_{10/26}$}}
\put (10,4) {\small {\color{white} $R^+_{19/26}$}}
\put (69,30) {\color{white} $z_0$}
\put (80,15) {\color{white} $K$}
\end{overpic}
\caption{\sl{Details of \figref{ghost1} near the period $3$ point $z_0 \in \bd K$.}} 
\label{ghost2}   
\end{figure}

There is a one-to-one correspondence between cycles of $\sigma$ in ${\mathcal S}_{\mathcal O}$ and cycles of $\whD$ in the union $\Lambda(z_0) \cup \cdots \cup \Lambda(z_{k\ell-1})$ (for example, assign to the $\sigma$-cycle of $S(z_i,a,b)$ the $\whD$-cycle of $a$). It follows from our earlier discussion that there are $N$ cycles of essential sectors and $\sum_{j=1}^N \nu_j -N = \sum_{j=1}^N (\nu_j-1)$ cycles of ghost sectors in ${\mathcal S}_{\mathcal O}$. Let 
\begin{align*}
N_1 := & \ \text{number of critical values of} \ P \ \text{attached to some minimal} \\
& \ \text{ghost sector in} \ {\mathcal S}_{\mathcal O}, \\  
N_2 := & \ \text{number of escaping critical values of} \ P \ \text{not attached to} \\
& \ \text{any minimal ghost sector in} \ {\mathcal S}_{\mathcal O}. 
\end{align*}
Note that each critical value that contributes to the count $N_1$ is attached to a {\it unique} minimal ghost sector. Furthermore, the critical values that contribute to the count $N_1$ or $N_2$ can only come from the $D-d$ critical points of $P$ outside $K^0 \cup \cdots \cup K^{k-1}$. Hence,
\begin{equation}\label{n1n2}
N_1+N_2 \leq D-d. 
\end{equation}

\begin{lemma}\label{NaN}
The number of cycles of ghost sectors in ${\mathcal S}_{\mathcal O}$ is $\leq N_1+1$. The inequality is strict if $K$ has period $k=1$. 
\end{lemma}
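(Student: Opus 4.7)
My plan would be to use the weight-length relation $|\sigma(S)| = D|S| - w(S)$ as the driver, turning each ghost cycle into a critical value attached to a minimal ghost sector.

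\smallskip
First, I would extract a critical value from every cycle. Given a ghost cycle $S_0\to S_1\to\cdots\to S_{m-1}\to S_0$ under $\sigma$, let $S^\ast$ be a sector of minimal length in the cycle (ties broken arbitrarily) and $S^{\ast\ast}:=\sigma^{-1}(S^\ast)$ its predecessor in the cycle. Then $|S^\ast|\leq|S^{\ast\ast}|$, so
\[
w(S^{\ast\ast}) \;=\; D\,|S^{\ast\ast}|-|S^\ast| \;\geq\;(D-1)|S^{\ast\ast}|\;>\;0.
\]
By the argument-principle identification of $w$, some critical point $\omega$ of $P$ is attached to $S^{\ast\ast}$, and therefore the critical value $v:=P(\omega)$ is attached to $S^\ast$. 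This produces a map
\[
\Phi \colon \{\text{ghost cycles}\} \longrightarrow \{\text{critical values of }P\text{ attached to some sector in }\mathcal S_{\mathcal O}\}
\]
by sending each cycle to such a $v$.

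\smallskip
Second, I would argue that $\Phi$ actually lands in the $N_1$-set (critical values attached to some \emph{minimal} ghost sector). When $k=1$, this is automatic because every $z_j\in\mathcal O$ lies in the unique fixed component $K=K^0$, so ghost sectors (which by definition do not meet $K^i$) automatically contain no $z_j$ and are minimal. For $k\geq 2$, I would argue the same for $S^\ast$ by a size-monotonicity argument: if $S^\ast$ based at $z_i$ contained some $z_{j}$ with $j\not\equiv i\pmod k$, then the rays landing at $z_j$ would partition (most of) $S^\ast$ into sectors based at $z_j$ whose total $|\cdot|$-lengths sum to at most $|S^\ast|$; the shortest of these would be a strictly shorter ghost sector, and combining with minimality of $|S^\ast|$ in its cycle (together with the fact that $\sigma$ intertwines the roles of $z_i$ and $z_j$ in their respective sector-cycles) would force a descent that can fail at most once in the whole dynamical picture — this single unavoidable failure is precisely the ``$+1$'' in the statement.

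\smallskip
Third, I would check that $\Phi$ is essentially injective. If two distinct cycles had shortest sectors $S^\ast_1,S^\ast_2$ to which the same critical value $v$ is attached, then either $v$ is an interior critical value of both (forcing $S^\ast_1\subsetneq S^\ast_2$ by the nesting dichotomy for sectors in $\mathcal S_{\mathcal O}$, contradicting the minimality of $|S^\ast_2|$ within its cycle once one tracks the containment back through $\sigma$-images), or $v=P(\omega)$ arises by boundary attachment, in which case $\omega$ is an infinitely broken periodic critical point belonging to a pair of permanent-partner rays (\lemref{partners}) — this pair sits on the common boundary of at most one ghost cycle on each side, and I would show the two-sided contribution amounts to at most one extra cycle, i.e.\ the same ``$+1$''. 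For $k=1$, the size-monotonicity descent above is vacuous (no $z_j$ issue) and the nested-sectors case reduces cleanly, so $\Phi$ is genuinely injective, yielding the strict bound.

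\smallskip
The main obstacle I foresee is the second paragraph: rigorously pinning down why the ``non-minimal shortest sector'' and the ``boundary-attachment coincidence'' can each occur in at most one cycle, and why these two exceptional phenomena collapse into a single ``$+1$'' rather than potentially a ``$+2$''. This will require a careful case analysis of how ghost sectors nest across different base points $z_i$ of the orbit $\mathcal O$, using the preservation of type (ghost vs.\ essential) and of nesting under $\sigma$, together with the sector-width identity to bound the cumulative ambiguity along each cycle.
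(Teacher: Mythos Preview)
Your first step is right and matches the paper: in each ghost cycle the shortest sector $S^\ast$ has a critical value attached, via $|\sigma(S)|=D|S|-w(S)$. Your treatment of the $k=1$ case is also correct: every ghost sector is then automatically minimal, so $\Phi$ lands in the $N_1$-set, and injectivity follows from the observation (recorded just before the lemma) that each critical value is attached to a \emph{unique} minimal ghost sector.

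The gap is your handling of $k\geq 2$. Your per-cycle ``descent'' --- a non-minimal $S^\ast$ contains some $z_j$, hence a strictly shorter ghost sector --- does not control which cycle that shorter sector belongs to, so it yields no contradiction for the cycle you started with. You then try to fold this failure and the boundary-attachment ambiguity of your third paragraph into a single ``$+1$'', but as you yourself flag at the end, nothing in your argument rules out ``$+2$''. Treating the cycles one at a time will not close this.

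The paper's move is a global ordering across all cycles. Let $S_1,\dots,S_M$ be the shortest sectors of the $M$ ghost cycles, indexed so that $|S_1|\leq\cdots\leq|S_M|$. If some $S_i$ with $i\leq M-1$ were non-minimal, it would contain a point $z\in\mathcal O$ and hence all but one of the sectors based at $z$. Since every $\sigma$-cycle visits every base point, those contained sectors include at least one representative from \emph{each} of the other $M-1$ ghost cycles, all strictly shorter than $S_i$. Then the shortest-in-cycle $S_j$ for each $j\neq i$ satisfies $|S_j|<|S_i|$, forcing $i=M$, a contradiction. Thus $S_1,\dots,S_{M-1}$ are minimal; by the uniqueness observation their attached critical values are automatically distinct, giving $M-1\leq N_1$. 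The ``$+1$'' is just the possibly non-minimal $S_M$, and there is no separate injectivity problem to resolve.
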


\begin{proof}
The following argument is inspired by \cite[Theorem 5.2]{K}. Let $M$ denote the number of cycles of ghost sectors in ${\mathcal S}_{\mathcal O}$. There is nothing to prove if $M=1$, so let us assume $M \geq 2$. The shortest sector in each of these $M$ cycles has a critical value attached to it. Let $S_1, \ldots, S_M$ denote these shortest sectors labeled so that $|S_1| \leq \cdots \leq |S_M|$. We prove that $S_1, \ldots, S_{M-1}$ are minimal. This gives the bound $M-1 \leq N_1$, as required. \vs

If $S_i$ is not minimal for some $1 \leq i \leq M-1$, it contains a point $z \in {\mathcal O}$ and therefore it contains all but one of the sectors based at $z$. Among these sectors there must be at least one representative from each of the $M-1$ cycles of ghost sectors other than the cycle represented by $S_i$ itself. This implies that there are $M-1$ ghost sectors shorter than $S_i$, a contradiction. \vs

When $K$ has period $1$, all ghost sectors are automatically minimal. Thus the shortest sectors $S_1, \ldots, S_M$ are minimal, giving the improved bound $M \leq N_1$.        
\end{proof}

\begin{corollary}\label{cardper}
If $]a,b[$ is a periodic gap of $C$ with $a,b \in \Lambda(z_0)$, then $\# ([a,b] \cap I) \leq N_1+2$. The inequality is strict if $K$ has period $k=1$. 
\end{corollary}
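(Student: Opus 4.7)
The plan is to bound $\nu := \#([a,b] \cap I)$ by counting ghost sectors. Since $\Pi$ is monotone and $\Pi(a)=\Pi(b)=:\tau$, the intersection $[a,b] \cap I$ coincides with the fiber $\Pi^{-1}(\tau)$, and by \thmref{perr} together with \thmref{B} this fiber is contained in $\Lambda(z_0)$. List its elements in counterclockwise order as $a=\theta_1 < \theta_2 < \cdots < \theta_\nu = b$. For each $1 \leq i \leq \nu-1$ the sector $S_i := S(z_0, \theta_i, \theta_{i+1})$ is bounded by two rays landing at $z_0$ with the same image under $\Pi$, so by definition $S_i$ is a ghost sector. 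This produces $\nu-1$ ghost sectors based at $z_0$.

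The central step is to show that $S_1, \ldots, S_{\nu-1}$ lie in pairwise distinct $\sigma$-cycles. Suppose $\sigma^{\circ m}(S_i) = S_j$ for some $m \geq 1$ and $1 \leq i,j \leq \nu-1$. Since both sectors are based at $z_0$ and $\sigma$ advances the base index modulo $k\ell$, we must have $m = k\ell t$ for some $t \geq 1$, and then $\whD^{\circ k\ell t}(\theta_i)=\theta_j$ and $\whD^{\circ k\ell t}(\theta_{i+1})=\theta_{j+1}$. Here I would invoke the order-preservation fact established earlier in \S \ref{pergp}: $\whD^{\circ k\ell}$ sends each fiber $\Pi^{-1}(\tau')$ to $\Pi^{-1}(\whd^{\circ \ell}(\tau'))$ by an order-preserving bijection, and the iterate $\whD^{\circ k\ell q}$ acts as the identity on each such fiber. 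Since $\theta_j$ lies in $\Pi^{-1}(\tau)$, membership forces $\whd^{\circ \ell t}(\tau)=\tau$, hence $q \mid t$; then $\whD^{\circ k\ell t}$ is the identity on $\Pi^{-1}(\tau)$ and so $\theta_i = \theta_j$, i.e.\ $i = j$.

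With the distinct-cycle claim established, the $\nu-1$ ghost sectors $S_1, \ldots, S_{\nu-1}$ represent $\nu-1$ distinct $\sigma$-cycles of ghost sectors in $\mathcal{S}_{\mathcal O}$. Applying \lemref{NaN}, which bounds the total number of such cycles by $N_1+1$ with strict inequality when $k=1$, I obtain $\nu - 1 \leq N_1 + 1$, i.e.\ $\nu \leq N_1 + 2$; when $k=1$, the strict form of \lemref{NaN} yields $\nu - 1 \leq N_1$, hence $\nu \leq N_1 + 1 < N_1 + 2$. The only delicate ingredient is the distinct-cycle step, but its backbone (order preservation of $\whD^{\circ k\ell}$ on fibers of $\Pi$ landing on the cycle $\mathcal O$, and triviality of $\whD^{\circ k\ell q}$ on each fiber) has already been recorded, so the rest of the argument amounts to bookkeeping.
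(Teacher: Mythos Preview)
Your argument is correct and follows essentially the same route as the paper. The only difference is packaging: the paper had already recorded (just before \lemref{NaN}) that the total number of ghost-sector cycles in $\mathcal{S}_{\mathcal O}$ equals $\sum_{j=1}^N(\nu_j-1)$, so its proof simply notes $\#([a,b]\cap I)=\nu_i$ for some $i$ and reads off $\nu_i-1\le\sum_j(\nu_j-1)\le N_1+1$ from \lemref{NaN}; you instead verify directly, via the order-preservation of $\whD^{\circ k\ell}$ on fibers and the triviality of $\whD^{\circ k\ell q}$, that your $\nu-1$ ghost sectors lie in distinct $\sigma$-cycles before invoking \lemref{NaN}. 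Both arrive at the bound the same way.
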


\begin{proof}
By what we have seen, $\#([a,b] \cap I) = \nu_i$ for some $1 \leq i \leq N$. By \lemref{NaN}, $\sum_{j=1}^N (\nu_j-1) \leq N_1+1$ so $\nu_i \leq N_1+2$. If $K$ has period $1$, the above sum is bounded by $N_1$ so $\nu_i \leq N_1+1$. 
\end{proof}

\subsection{Preperiodic gaps of $C$}

Let $]a,b[$ be a strictly preperiodic gap of $C$. For $i \geq 0$ set $a_i:=\whD^{\circ ik}(a), b_i:=\whD^{\circ ik}(b)$, so each $]a_i,b_i[$ is a gap of $C$. Let $n \geq 1$ be the smallest integer for which $]a_n,b_n[$ is periodic. Then $R^P_{a_n}, R^P_{b_n}$ co-land at a periodic point $z_0 \in \bd K$. As in \S \ref{pergp}, let $\mathcal O$ denote the orbit of $z_0$ under $P$ and let ${\mathcal S}_{\mathcal O}$ be the collection of sectors based at the points of $\mathcal O$. Recall that $N_2 \geq 0$ is the number of escaping critical values of $P$ that are not attached to any minimal ghost sector in ${\mathcal S}_{\mathcal O}$.   

\begin{lemma}\label{prepcount}
$\# ([a,b] \cap I) \leq \# ([a_n,b_n] \cap I) + N_2$.    
\end{lemma}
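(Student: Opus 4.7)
The idea is to bound the total number of identifications made by the decomposition $\whD^{\circ nk}=(\whD^{\circ k})^n$ on the fibers of $\Pi$ by matching each identification with a distinct escaping critical value of $P$ not attached to any minimal ghost sector of $\mathcal{S}_\mathcal{O}$. Setting $p_i:=\#([a_i,b_i]\cap I)$ and using that $\whD^{\circ k}$ sends $[a_i,b_i]\cap I=\Pi^{-1}(\Pi(a_i))$ into $[a_{i+1},b_{i+1}]\cap I$ by the semiconjugacy, the deficit $p_0-p_n$ is at most the total number of identifications over $i=0,\ldots,n-1$. An identification at step $i$ consists of adjacent $\alpha<\alpha'$ in $[a_i,b_i]\cap I$ with $\whD^{\circ k}(\alpha)=\whD^{\circ k}(\alpha')$, and since $\Pi(\alpha)=\Pi(\alpha')$, \lemref{partners} implies $]\alpha,\alpha'[$ is a taut gap of $I$ whose ray pair $R^-_\alpha,R^+_{\alpha'}$ crashes into an escaping critical point $\omega$ of $P^{\circ k}$ and whose $P^{\circ k}$-image is a smooth ray.

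To each such $\omega$ I assign the escaping critical value $v(\omega):=P^{\circ(j^*+1)}(\omega)$ of $P$, where $j^*\in\{0,\ldots,k-1\}$ is the smallest index with $P^{\circ j^*}(\omega)$ a critical point of $P$. The natural framework for handling the assignment $\omega\mapsto v(\omega)$ is the enlarged orbit $\mathcal{O}^*:=\mathcal{O}\cup\{P^{\circ j}(z^*):0\leq j<nk\}$, where $z^*\in\bd K$ is the common landing point of the rays $\{R^P_\theta:\theta\in[a,b]\cap I\}$---this landing point exists by \thmref{B} applied to the preperiodic angle $\Pi(a)$ together with the classical landing of preperiodic rays in the connected case, and satisfies $P^{\circ nk}(z^*)=z_0$. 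The associated sector structure $\mathcal{S}_{\mathcal{O}^*}$ refines $\mathcal{S}_\mathcal{O}$ by adding sectors based at the preperiodic points $P^{\circ j}(z^*)$, and each identification at step $i$ contributes a distinct new minimal ghost sector $S(P^{\circ ik}(z^*),\alpha,\alpha')$ of $\mathcal{S}_{\mathcal{O}^*}$. An argument-principle count in the spirit of \lemref{NaN}, adapted to the preperiodic tail, attaches to each such new minimal ghost sector a distinct critical point of $P$ whose image under $P$ is precisely the corresponding $v(\omega)$. These $v(\omega)$'s are escaping (since the new minimal ghost sectors are disjoint from $K_P$) and---crucially---not attached to any minimal ghost sector of $\mathcal{S}_\mathcal{O}$, because the periodic ghost structure at $\mathcal{O}$ is combinatorially distinct from the preperiodic one at the new bases. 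This yields the desired bound of $N_2$.

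\textbf{Main obstacle.} The crux is verifying that each $v(\omega)$ is not attached to any minimal ghost sector of $\mathcal{S}_\mathcal{O}$. Because ``attached'' combines direct membership with a pullback condition via $\sigma^{-1}$, one must carefully track how forward iteration of $P$ transports the new preperiodic minimal ghost sectors of $\mathcal{S}_{\mathcal{O}^*}$ through the preperiodic tail until they land inside $\mathcal{S}_\mathcal{O}$, and then show that their images are essential or non-minimal ghost (rather than minimal ghost) sectors of $\mathcal{S}_\mathcal{O}$. In addition, the ``shortest sector in each cycle'' argument of \lemref{NaN} is genuinely cyclic; for the acyclic preperiodic chain one must supply a monotonicity substitute---descending step by step along the preperiodic tail---in order to inject the new minimal ghost sectors of $\mathcal{S}_{\mathcal{O}^*}$ into escaping critical values of $P$ not already accounted for by the minimal ghost sectors of $\mathcal{S}_\mathcal{O}$.
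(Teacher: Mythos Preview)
Your overall skeleton matches the paper's: push $[a,b]\cap I$ forward under $\whD^{\circ nk}$, identify the ``collisions'' (pairs in a common fiber with the same $\whD^{\circ k}$-image), invoke \lemref{partners} to see that each collision comes from a taut gap whose ray pair crashes into an escaping critical point $\omega$ of $P^{\circ k}$, and assign to it an escaping critical value $v$ of $P$. The paper also uses \lemref{partners} together with the observation that once the image ray becomes smooth it stays smooth to deduce that $\whD^{\circ nk}$ is in fact at most $2$-to-$1$ on $[a,b]\cap I$, so the number of collisions is exactly $p_0-p_n$.

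Where you go astray is in the justification that each assigned $v$ is \emph{not} attached to any minimal ghost sector of $\mathcal{S}_{\mathcal{O}}$. You propose to enlarge to $\mathcal{O}^*$, build preperiodic sectors $\mathcal{S}_{\mathcal{O}^*}$, and adapt the ``shortest-sector-in-each-cycle'' mechanism of \lemref{NaN}. You correctly note that this mechanism is genuinely cyclic and would need an acyclic substitute for the preperiodic tail, but you never supply one; and the assertion that the periodic and preperiodic ghost structures are ``combinatorially distinct'' is not an argument. As written, the proposal stops precisely at the point that requires proof.

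The paper avoids all of this with a direct observation you are missing. Since $P^{\circ k}(R^-_{\theta_i})$ is smooth (by \lemref{partners}), the intermediate image $R:=P^{\circ j}(R^-_{\theta_i})$ is smooth or finitely broken, passes through $v$, and lands on $\bd K^j$. If $v$ were attached to a minimal ghost sector $S\in\mathcal{S}_{\mathcal{O}}$, then (because $R$ is not one of the periodic boundary rays of $S$) $v$ would have to lie in the interior of $S$, forcing the entire ray $R\subset S$ and hence its landing point into $\ov S\cap K^j$. Minimality gives $S\cap K^j=\emptyset$, so the landing point is the base of $S$ and $R$ is a boundary ray after all --- a contradiction. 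That is the whole argument; no enlarged orbit, no preperiodic sector structure, no analogue of \lemref{NaN}. Drop $\mathcal{O}^*$ and argue directly with the landing behaviour of $R$.
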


\begin{proof}
Under the iterate $\whD^{\circ nk}$ every angle in $[a,b] \cap I$ maps to $[a_n,b_n] \cap I$. Moreover, by \lemref{partners}, for each $\theta_n \in [a_n,b_n] \cap I$ there can be at most two angles in $[a,b] \cap I$ which map to $\theta_n$ under $\whD^{\circ nk}$. Let us assume $a \leq \theta<\theta' \leq b$ are such angles and set $\theta_i:= \whD^{\circ ik}(\theta), \theta'_i:=\whD^{\circ ik}(\theta')$, so $\theta_n=\theta'_n$. Let $0 \leq i \leq n-1$ be the largest integer for which $\theta_i \neq \theta_i$. By \lemref{partners}, the rays $R^-_{\theta_i}, R^+_{\theta'_i}$ crash into a critical point $\omega$ of $P^{\circ k}$ and the common image $P^{\circ k}(R^-_{\theta_i})=R_{\theta_{i+1}}=R_{\theta'_{i+1}}=P^{\circ k}(R^+_{\theta'_i})$ is smooth. Hence there is an integer $1 \leq j \leq k$ for which $v:=P^{\circ j}(\omega)$ is an escaping critical value of $P$. If $v$ were attached to a minimal ghost sector $S \in {\mathcal S}_{\mathcal O}$, it would necessarily be an interior critical value of $S$ since it has the smooth or finitely broken ray $R:=P^{\circ j}(R^-_{\theta_i})$ passing through it. This would imply that the entire ray $R$ is contained in $S$, and therefore the landing point $\zeta$ of $R$ belongs to $\ov{S} \cap K^j$. Since $S$ does not meet the component $K^j$, the point $\zeta$ would have to be the base point of $S$, and $R$ would be one of the rays bounding $S$, which is a contradiction. \vs

We have assigned to each such pair $\theta, \theta'$ at least one escaping critical value of $P$ not attached to any minimal ghost sector in ${\mathcal S}_{\mathcal O}$. Evidently different pairs have different critical values assigned to them, so the number of such pairs is at most $N_2$. The lemma follows immediately.       
\end{proof}

We are now ready to finish the proof of \thmref{C}: 

\begin{proof}[Proof of \thmref{C}]
Let $]a,b[$ be a gap of the Cantor set $C$. We need to show that the closed interval $[a,b]$ contains at most $D-d+2$ points of $I$. Set $a_i:=\whD^{\circ ik}(a), b_i:= \whD^{\circ ik}(b)$ for $i \geq 0$. We consider two cases: \vs

$\bullet$ {\it Case 1.} There is a smallest $n \geq 1$ such that $a_n=b_n$. Then $]a_{n-1}, b_{n-1}[$ is a taut gap of $C$, so $]a,b[$ is a gap of $I$ by \lemref{tCtI}. In this case $[a,b]$ contains exactly two points of $I$, i.e. the endpoints $a,b$. \vs 

$\bullet$ {\it Case 2.} $a_i \neq b_i$ and therefore $]a_i, b_i[$ is a gap of $C$ for all $i \geq 0$. Then there is a smallest $n \geq 0$ such that $]a_n,b_n[$ is periodic. By \corref{cardper}, $[a_n,b_n]$ contains at most $N_1+2$ points of $I$. It follows from \lemref{prepcount} and the inequality \eqref{n1n2} that the cardinality of $[a,b] \cap I$ is at most $N_1+N_2+2 \leq D-d+2$, as required. Again by \corref{cardper} this inequality is strict if $K$ has period $1$. 
\end{proof}        

\section{Proof of \thmref{D}}\label{sec:ex}

In this section we give a descriptive proof of \thmref{D} by constructing examples of polynomials $P$ of degree $D \geq 3$ having a filled Julia set component $K$ of period $k=1$ and polynomial-like degree $d \geq 2$ for which the semiconjugacy $\Pi: I \to \TT$ of \thmref{A} has the top valence $D-d+1$ asserted by \thmref{C}. If $D-d+1 \geq 3$, it follows that $I$ has isolated points. The common feature of these examples is that $D-d+1$ consecutive fixed rays of $P$ co-land at a fixed point on $\bd K$. Our construction is flexible in that we can designate the hybrid class of the restriction of $P$ to a neighborhood of $K$ as well as the fixed rays that co-land on $\bd K$.  

\subsection{General description of the examples}\label{descex}

The fixed rays of a monic degree $D$ polynomial have angles $\theta_i := i/(D-1) \modd$, taking the subscript $i$ modulo $D-1$. Fix an integer $j$ and choose a collection of $D-d$ open intervals of the form 
$$
J_i= \Big] \theta_i, \theta_i + \frac{1}{D} \Big[ \quad \text{or}  \quad \Big] \theta_i-\frac{1}{D}, \theta_i \Big[
$$
subject to the conditions that (i) each $J_i$ is contained in the interval $]\theta_j, \theta_{j+D-d}[$, and (ii) the $J_i$ have pairwise disjoint closures. (The reader can verify that there are $D-d+1$ choices for such collections). Each of the $D-d$ intervals $]\theta_i, \theta_{i+1}[$ for $j \leq i \leq j+D-d-1$ contains exactly one element of $\{ J_i \}$, namely $J_i = ]\theta_i,\theta_i+1/D[$ or $J_{i+1}=]\theta_{i+1}-1/D,\theta_{i+1}[$. For simplicity let $\theta'_i:=\theta_i \pm 1/D$ denote the endpoint of $J_i$ other than $\theta_i$. Note that $\whD(\theta_i)=\whD(\theta'_i)=\theta_i$. \vs 

Take a polynomial $Q$ of degree $d$ with $K_Q$ connected. Let $P$ be a monic polynomial of degree $D$ with the following properties: \vs 
\begin{enumerate}
\item[(i)]
There is a component $K=P(K)$ of $K_P$ and neighborhoods $U_0, U_1$ of $K$ such that $P|_{U_1}: U_1 \to U_0$ is a polynomial-like map of degree $d$ hybrid equivalent to $Q$. \vs
\item[(ii)]
The $D-d$ critical points of $P$ that do not belong to $K$ are distinct and escape to $\infty$. \vs
\item[(iii)]
For each $J_i$ in the chosen collection, the field lines $R_{\theta_i}$ and $R_{\theta'_i}$ crash into an escaping critical point $\omega_i$. \vs
\end{enumerate}
We claim that these properties imply that the angles $\theta_j, \ldots, \theta_{j+D-d}$ are in $I=I_K$ and belong to the same fiber of the semiconjugacy $\Pi:I \to \TT$. By \thmref{B} and \thmref{perr} the corresponding rays $R^P_{\theta_j}, \ldots, R^P_{\theta_{j+D-d}}$ would co-land at a repelling or parabolic fixed point on $\bd K$. This will reduce the proof of \thmref{D} to the construction of a polynomial $P$ satisfying (i)-(iii). \vs 

By (iii) the union $R_{\theta_i} \cup R_{\theta'_i} \cup \{ \omega_i \}$ bounds a ``wake'' $W_i$ containing all field lines $R_\theta$ with $\theta \in J_i$. Note that $P$ maps $W_i$ univalently onto the domain $\C \sm R_{\theta_i}([Ds_i,+\infty[)$ which properly contains $W_i$. Here $s_i>0$ is the Green's potential of $\omega_i$. It follows from the Schwarz lemma that $W_i$ contains a unique fixed point $p_i$ which is necessarily repelling. In particular, this shows that $K$ is not contained in $W_i$, so $K \cap W_i = \es$. It is easy to see that one of the two broken rays $R_{\theta_i}^\pm$ (more specifically, $R_{\theta_i}^+$ if $J_i= ]\theta_i , \theta'_i [$ or $R_{\theta_i}^-$ if $J_i= ]\theta'_i , \theta_i [$) lands at $p_i$, whereas the other lands at a fixed point $z_i$ that lies outside of the union $\bigcup W_j$. Of the $D$ fixed points of $P$ counting multiplicities, $D-d$ are $\{ p_i \}$ that fall in $\bigcup W_j$. The remaining $d$ fixed points must be in $K$ since $K \subset \CC \sm \bigcup W_j$. This shows that every $z_i$ belongs to $K$ and therefore $\theta_i \in I$. To prove our claim, we show that $]\theta_i, \theta_{i+1}[ \, \cap I = \es$ for all $j \leq i \leq j+D-d-1$ (it will of course follow that the $z_i$ are one and the same point). \vs

\begin{figure}[t]
\centering
\begin{overpic}[width=0.95\textwidth]{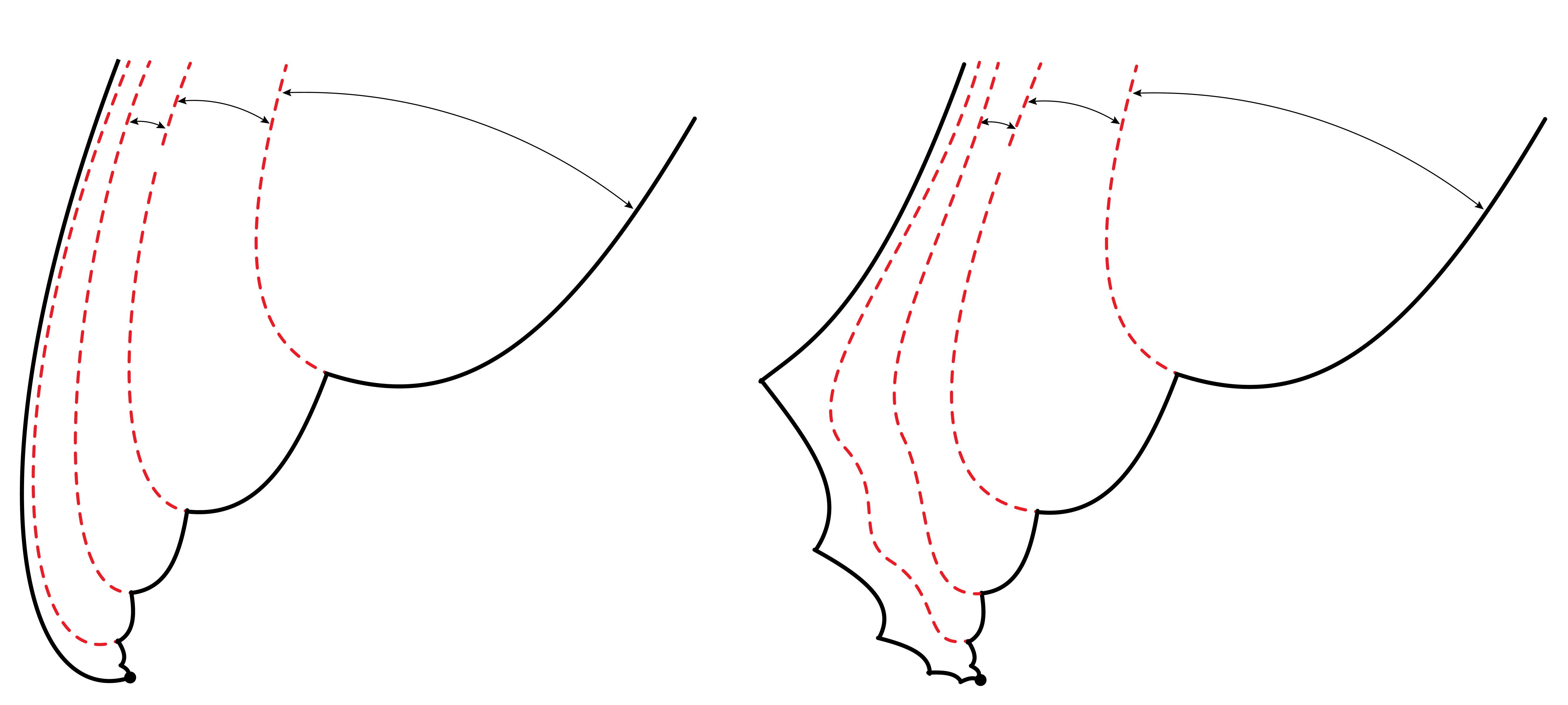}
\put (43,39) {\small $\alpha_0$}
\put (17,42) {\color{red}{\small $\alpha_1$}}
\put (12,42) {\color{red}{\small $\alpha_2$}}
\put (8,42) {\color{red}{\small $\alpha_3$}}
\put (-5,40) {\small $\alpha_0+\tfrac{1}{D-1}$}
\put (27,34) {\footnotesize $D^{-1}$}
\put (12,35.7) {\tiny $D^{-2}$}
\put (7.5,34) {\tiny $D^{-3}$}
\put (20.5,22) {\small $c_1=\omega_i$}
\put (11,13.5) {\small $c_2$}
\put (7.2,8.3) {\small $c_3$}
\put (99,39) {\small $\alpha_0$}
\put (72,42) {\color{red}{\small $\alpha_1$}}
\put (67,42) {\color{red}{\small $\alpha_2$}}
\put (63,42) {\color{red}{\small $\alpha_3$}}
\put (49,40) {\small $\alpha_0+\tfrac{1}{D-1}$}
\put (81,34) {\footnotesize $D^{-1}$}
\put (66,35.7) {\tiny $D^{-2}$}
\put (61.8,34) {\tiny $D^{-3}$}
\put (74.5,22) {\small $c_1=\omega_i$}
\put (65.2,13.5) {\small $c_2$}
\put (61.5,8.3) {\small $c_3$}
\end{overpic}
\caption{\sl The preimages of the interval $]\alpha_0,\alpha_1[$ exhaust the interval $]\alpha_0,\alpha_0+1/(D-1)[$ between two consecutive fixed points of $\whD$. The fixed ray at angle $\alpha_0+1/(D-1)$ can be either smooth (left) or infinitely broken (right).}  
\label{wakes}
\end{figure}

Suppose $]\theta_i, \theta_{i+1}[$ contains $J_i = ]\theta_i, \theta'_i[ $ (the case where it contains $J_{i+1} = ]\theta'_{i+1}, \theta_{i+1}[$ is similar). Set $\alpha_0:=\theta_i, \alpha_1:=\theta'_i=\alpha_0+1/D$. By (ii) the rays $R^-_{\alpha_0}, R^+_{\alpha_1}$ first crash into the critical point $\omega_i=R^-_{\alpha_0}(s_{\alpha_0})$. Since $\whD(\alpha_0)=\alpha_0$, the ray $R^-_{\alpha_0}$ is infinitely broken at the preimages $c_n := R^-_{\alpha_0}(s_{\alpha_0}/D^{n-1})$ of $\omega_i$ (compare \S \ref{gr}). Thus, for each $n \geq 1$ there is an angle $\alpha_n \in ]\alpha_0, \alpha_0+1/(D-1)[$ such that the rays $R^-_{\alpha_0}, R^+_{\alpha_n}$ crash into $c_n$ (see \figref{wakes}). The relation $P(c_n)=c_{n-1}$ gives $\whD(\alpha_n)=\alpha_{n-1}$ which shows  
$$
\alpha_n=\alpha_0+\frac{1}{D}+\frac{1}{D^2}+\cdots+\frac{1}{D^n}. 
$$
Thus, $\alpha_n \to \alpha_0+1/(D-1)$ as $n \to \infty$. Since $]\alpha_0,\alpha_n[ \, \cap I = \es$, we conclude that $]\alpha_0,\alpha_0+1/(D-1)[ \, \cap I = \es$, as required. \vs

\begin{figure}[t!]
\centering
\begin{overpic}[width=0.8\textwidth]{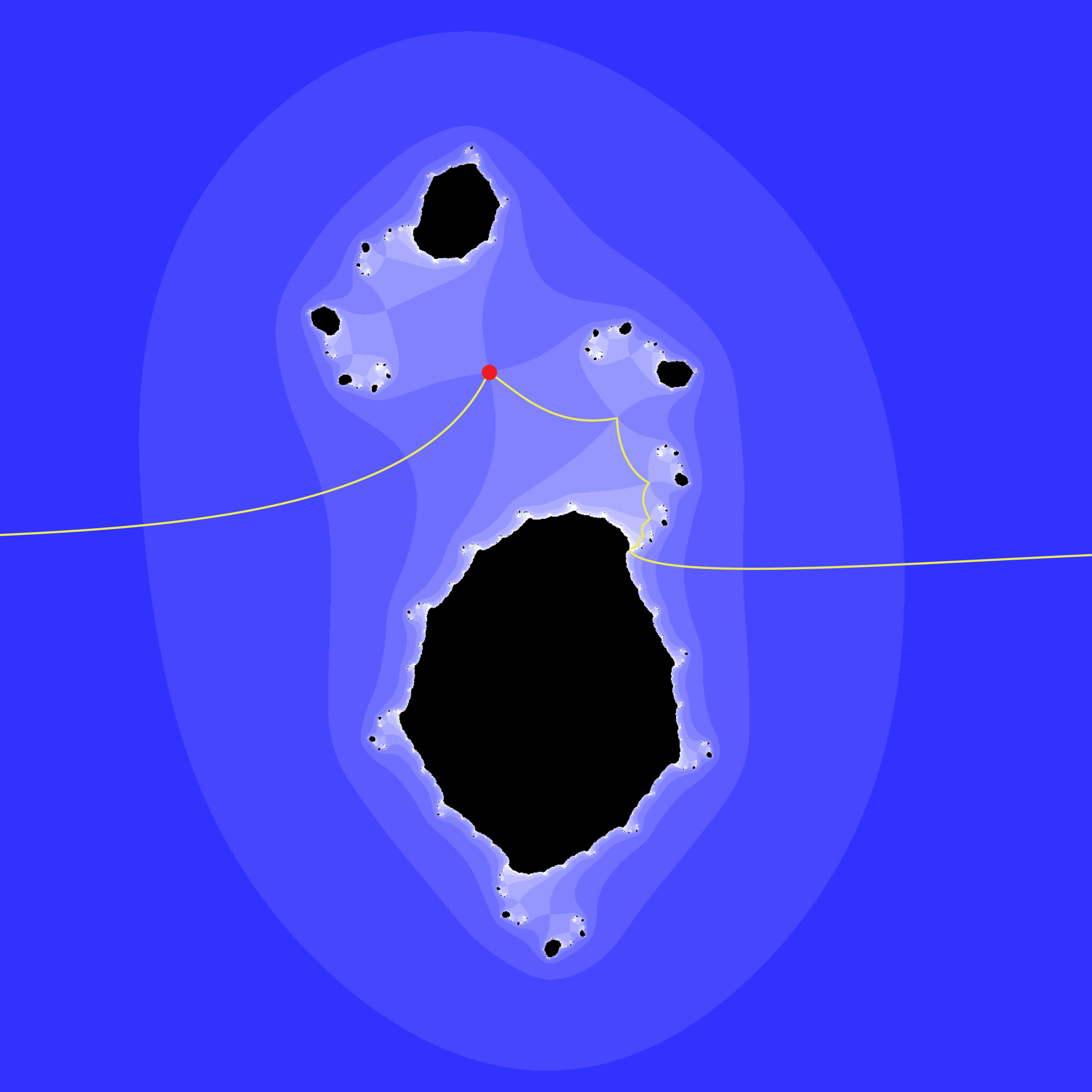}
\put (94,51) {\small {\color{white} $R_0$}}
\put (1,47) {\small {\color{white} $R^+_{1/2}$}}
\put (54,47) {\small {\color{white} $\beta$}}
\put (44.5,67) {\small {\color{white} $\omega_1$}}
\put (48,36) {{\color{white} $K$}}
\end{overpic}
\caption{\sl{Filled Julia set of a degree $D=3$ polynomial $P$ with a quadratic-like restriction hybrid equivalent to $Q(z)=z^2$. The fixed rays $R_0, R^+_{1/2}$ co-land at the fixed point $\beta$ on the boundary of the component $K$ of $K_P$. Here $P(z)=a z^2+z^3$ with $a \approx 0.31629-i 1.92522$.}} 
\label{seh}   
\end{figure}

\begin{figure}[t!]
\centering
\begin{overpic}[width=0.8\textwidth]{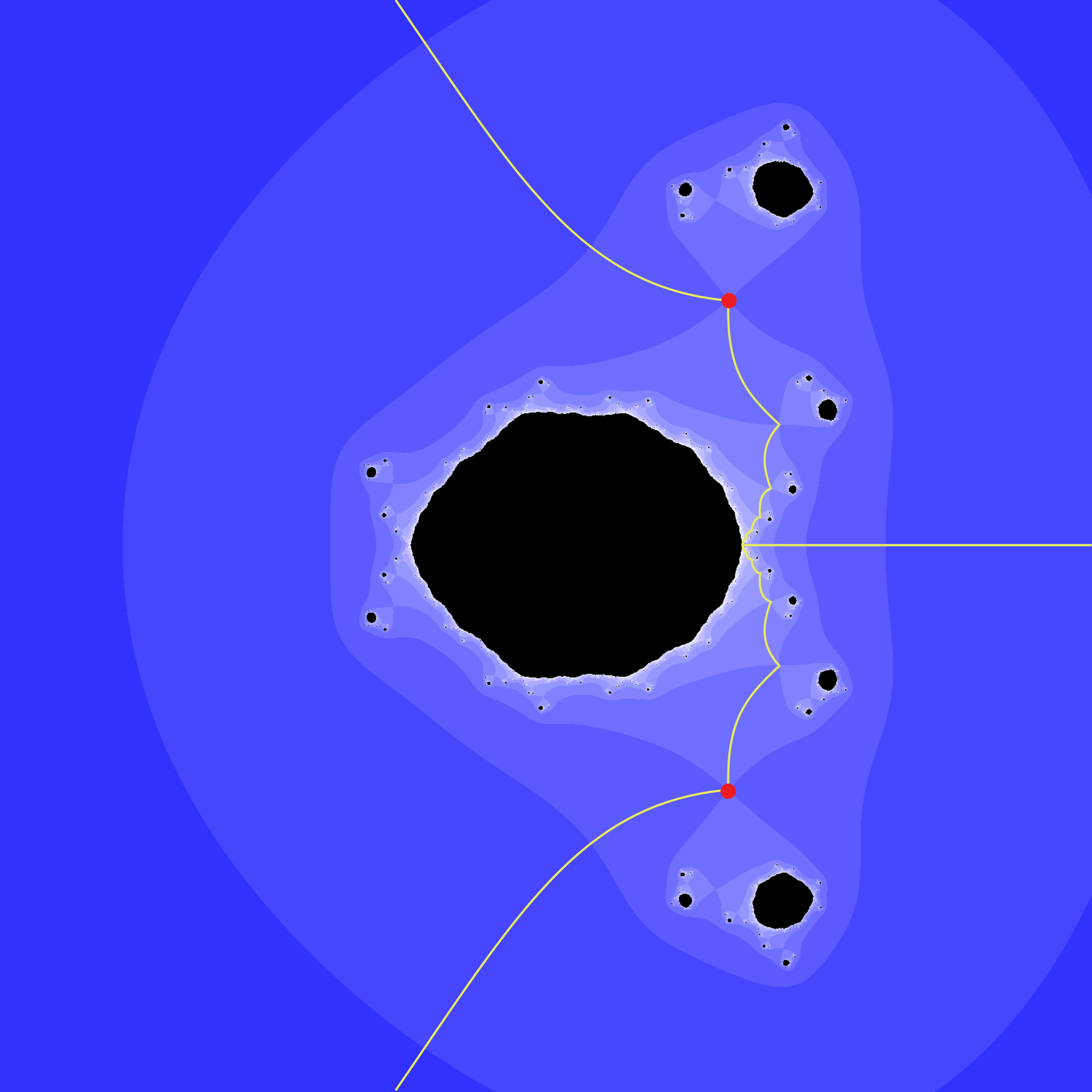}
\put (94,52) {\small {\color{white} $R_0$}}
\put (29,96) {\small {\color{white} $R^+_{1/3}$}}
\put (28,3) {\small {\color{white} $R^-_{2/3}$}}
\put (64.3,49) {\small {\color{white} $\beta$}}
\put (68.2,72) {\small {\color{white} $\omega_1$}}
\put (68.2,27) {\small {\color{white} $\omega_2=\ov{\omega_1}$}}
\put (51,49) {{\color{white} $K$}}
\end{overpic}
\caption{\sl{Filled Julia set of a degree $D=4$ polynomial $P$ with a quadratic-like restriction hybrid equivalent to $Q(z)=z^2$. The fixed rays $R_0, R^+_{1/3}, R^-_{2/3}$ co-land at the fixed point $\beta$ on the boundary of the component $K$ of $K_P$. Here $P(z)=\sqrt[3]{10} \, z^2+a z^3+z^4$ with $a \approx -1.64846$.}} 
\label{chah}   
\end{figure}

Figures \ref{seh} and \ref{chah} illustrate examples of cubic and quartic polynomials of this type with super-attracting fixed points at $0$, so their quadratic-like restrictions are hybrid equivalent to $z \mapsto z^2$. \figref{seh} shows the case $D=3$ with the choice 
$$
J_1= \, ] \theta'_1, \theta_1 [ \, = \Big] \frac{1}{6}, \frac{1}{2} \Big[,
$$
where the fixed rays $R_0, R^+_{1/2}$ co-land at a repelling fixed point $\beta \in \bd K$. \figref{chah} shows the case $D=4$ with the choice
$$
J_1 = \, ] \theta'_1, \theta_1 [ \, = \Big] \frac{1}{12}, \frac{1}{3} \Big[ \quad \text{and} \quad J_2 = \, ] \theta_2, \theta'_2 [ \, = \Big] \frac{2}{3}, \frac{11}{12} \Big[, 
$$
where the fixed rays $R_0, R^+_{1/3}, R^-_{2/3}$ co-land at a repelling fixed point $\beta \in \bd K$. In this example $\theta_0=0$ is an isolated point of $I$.    

\subsection{Construction of examples by surgery}

We now give the details of the construction of polynomials that have the properties (i)-(iii) above. The idea is to use a cut-and-paste surgery to construct a synthetic model for such a polynomial, and then apply the measurable Riemann mapping theorem to realize it as an actual polynomial map. To simplify our exposition, we illustrate the construction of a degree $D=6$ polynomial with a degree $d=2$ polynomial-like restriction and $D-d+1=5$ fixed rays (one smooth, four broken) co-landing at a fixed point on $\bd K$. The general case is a straightforward modification of this example. \vs  

For $0 \leq i \leq 4$ let $\theta_i := i/5 \modd$, so each $\theta_i$ is fixed under multiplication by $6 \modd$. Set 
\begin{align*} 
& \theta'_1 := \theta_1-\frac{1}{6}=\frac{1}{30} & & \theta'_2 := \theta_2-\frac{1}{6} = \frac{7}{30}  \\[5pt]
& \theta'_3 := \theta_3+\frac{1}{6}=\frac{23}{30} & & \theta'_4 := \theta_4+\frac{1}{6} = \frac{29}{30}.
\end{align*} 
Fix a radius $R>1$. Define a Riemann surface $\XX$ conformally equivalent to the Riemann sphere $\Chat$ as follows. Cut eight slits in $\Chat$ along each of the closed straight line segments $[0, R\e^{2\pi i \theta}]$ where 
$\theta \in \{ \theta_1, \ldots , \theta_4, \theta'_1, \ldots , \theta'_4 \}$. We can think of the interior of each slit as having two sides which we denote by $\delta_\theta^-$ and $\delta_\theta^+$ for $\theta$ in the above set. In other words, for $0<r<R$,  
\begin{align*}
\delta^+_\theta(r) & := \lim_{\tau \searrow \theta} \ r \e^{2\pi i \tau} \\
\delta^-_\theta(r) & := \lim_{\tau \nearrow \theta} \ r \e^{2\pi i \tau}.
\end{align*}
The union $\YY$ of the slit sphere together with the sixteen arcs $\delta_\theta^\pm$ is a Riemann surface with real-analytic boundary arcs. Define a Riemann surface $\XX^*$ by making the identifications 
$$
\delta_{\theta_i'}^+(r) \longleftrightarrow \delta_{\theta_i}^-(r) \quad \text{and} \quad \delta_{\theta_i'}^-(r) \longleftrightarrow \delta_{\theta_i}^+(r) \quad (0<r<R) 
$$
on the boundary arcs of $\YY$ for $1 \leq i \leq 4$. It is not hard to check that $\XX^*$ is homeomorphic to a $2$-sphere with nine points removed, and that these missing points are analytically punctures. These punctures correspond to the four points 
$$
c_i :=  \lim_{r \to R} \delta_{\theta_i'}^\pm(r)= \lim_{r \to R} \delta_{\theta_i}^\mp(r) \qquad (1 \leq i \leq 4) 
$$
together with the four points 
\begin{align*}
z_1 & :=  \lim_{r \to 0} \delta_{\theta_1'}^+(r)= \lim_{r \to 0} \delta_{\theta_1}^-(r)  & & & z_2 & := \lim_{r \to 0} \delta_{\theta_2'}^+(r)= \lim_{r \to 0} \delta_{\theta_2}^-(r) \\
z_3 & :=  \lim_{r \to 0} \delta_{\theta_3'}^-(r)= \lim_{r \to 0} \delta_{\theta_3}^+(r)  & & & z_4 & := \lim_{r \to 0} \delta_{\theta_4'}^-(r)= \lim_{r \to 0} \delta_{\theta_4}^+(r)
\end{align*}
together with the single point 
\begin{align*}
z_0 & :=  \lim_{r \to 0} \delta_{\theta_1'}^-(r)= \lim_{r \to 0} \delta_{\theta_1}^+(r) = \lim_{r \to 0} \delta_{\theta_2'}^-(r)= \lim_{r \to 0} \delta_{\theta_2}^+(r) \\
& = \lim_{r \to 0} \delta_{\theta_3'}^+(r)= \lim_{r \to 0} \delta_{\theta_3}^-(r) = \lim_{r \to 0} \delta_{\theta_4'}^+(r)= \lim_{r \to 0} \delta_{\theta_4}^-(r) 
\end{align*}
(compare \figref{pants}). If we add these nine punctures to $\XX^\ast$, we obtain a compact Riemann surface $\XX$ homeomorphic to a $2$-sphere and therefore biholomorphic to the Riemann sphere $\Chat$ by the uniformization theorem. The function $\log|z|$ on $\YY$ induces a well-defined subharmonic function $G$ on $\XX$ which tends to $-\infty$ at $z_0, \ldots, z_4$ and takes the value $s_0 := \log R$ at $c_1, \ldots, c_4$ having simple critical points there. For $s \in \RR$ define 
$$
\Om(s) : = \{ z \in \XX : G(z) > s \}.
$$ 
Evidently the identity map gives rise to a biholomorphism 
$$
\zeta : \{ z \in \YY: |z|>R \} \to \Omega(s_0). \vs
$$

The map $z \mapsto z^6$ on $\YY$ induces a degree $6$ holomorphic branched covering $f : \Om(s_0/6) \to \Om(s_0)$ with nine critical points, four simple critical points at $c_1, \ldots, c_4$ and a critical point of multiplicity $5$ at $\infty$. However it is not possible to holomorphically extend $f$ to $\Om(s)$ for any $s < s_0/6$, because each of the pre-images of the $c_i$ will become a point of discontinuity. To circumvent this problem, we use quasiconformal surgery to extend the restriction $f|_{\Om(s_0/3)}$ to a degree $6$ smooth branched covering of $\XX$ with an invariant conformal structure of bounded dilatation. \vs

\begin{figure}[t!]
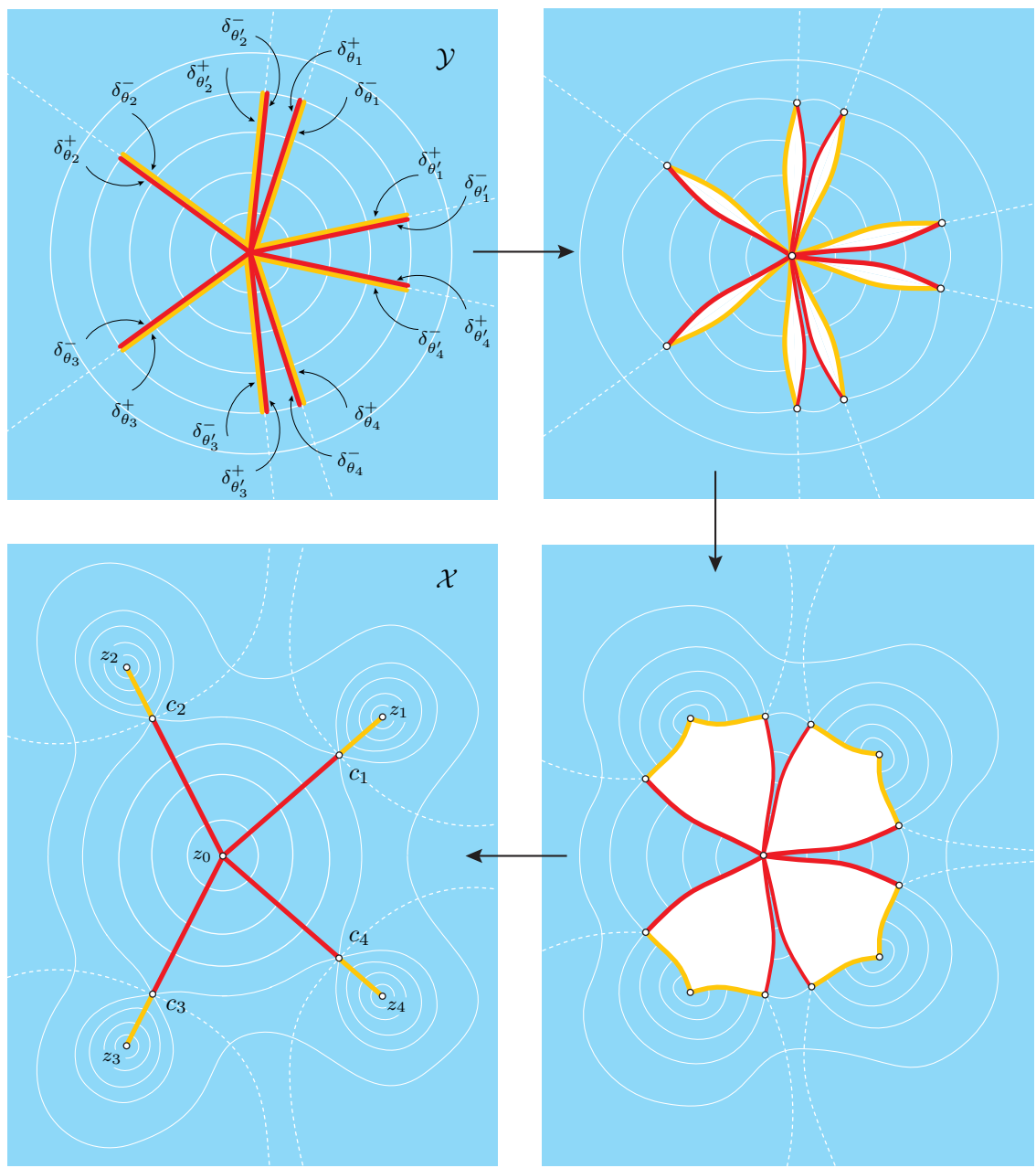

\centering
\begin{overpic}[width=0.98\textwidth]{pants.pdf}
\put (37,95) {\small $\YY$}
\put (37,50) {\small $\XX$}
\put (16,26.5) {\tiny {$z_0$}}
\put (33,39) {\tiny {$z_1$}}
\put (8,43.7) {\tiny {$z_2$}}
\put (8.2,9) {\tiny {$z_3$}}
\put (32.8,13.5) {\tiny{$z_4$}}
\put (29.5,33.4) {\footnotesize {$c_1$}}
\put (13.7,39.3) {\footnotesize {$c_2$}}
\put (13.7,13.5) {\footnotesize {$c_3$}}
\put (29.5,19.4) {\footnotesize {$c_4$}}
\put (39.5,84.5) {\tiny {$\delta_{\theta'_1}^-$}}
\put (39.5,72) {\tiny {$\delta_{\theta'_4}^+$}}
\put (35.5,86.5) {\tiny {$\delta_{\theta'_1}^+$}}
\put (35.5,71) {\tiny {$\delta_{\theta'_4}^-$}}
\put (30,92.5) {\tiny {$\delta_{\theta_1}^-$}}
\put (30,64.5) {\tiny {$\delta_{\theta_4}^+$}}
\put (28.5,96) {\tiny {$\delta_{\theta_1}^+$}}
\put (28.5,60.5) {\tiny {$\delta_{\theta_4}^-$}}
\put (18.5,98) {\tiny {$\delta_{\theta'_2}^-$}}
\put (18.5,59) {\tiny {$\delta_{\theta'_3}^+$}}
\put (15.5,94) {\tiny {$\delta_{\theta'_2}^+$}}
\put (16,63) {\tiny {$\delta_{\theta'_3}^-$}}
\put (9,92.5) {\tiny {$\delta_{\theta_2}^-$}}
\put (9,64.7) {\tiny {$\delta_{\theta_3}^+$}}
\put (4,87.5) {\tiny {$\delta_{\theta_2}^+$}}
\put (4,70) {\tiny {$\delta_{\theta_3}^-$}}
\end{overpic}
\caption{\sl{The cut-and-past construction of the Riemann surface $\XX \cong \Chat$. The solid white curves are the level sets of the function $\log |z|$ on $\YY$ and the induced subharmonic function $G$ on $\XX$. The dashed white curves are the radial lines at angles $\theta_i, \theta'_i$ in $\YY$ and their corresponding images in $\XX$.}} 
\label{pants}   
\end{figure}

For $0 \leq i \leq 4$ and $s \leq s_0$ denote by $D_i(s)$ the disk neighborhood of $z_i$ consisting of points $z$ with $-\infty \leq G(z)<s$. For $0 \leq i \leq 4$ and $s < s_0 < s'$ denote by $A_i(s, s')$ the closed topological annulus $\XX \sm (\Om(s') \cup D_i(s))$. Take any quadratic polynomial $Q$ with connected filled Julia set. For $s>0$ let $V(s)$ denote the topological disk $\{ z \in \C : G_Q(z) < s \}$, where $G_Q$ is the Green's function of $Q$. Fix some $\rho>0$ and let $\phi : V(2\rho) \to D_0(s_0/3)$ be any conformal isomorphism which sends $0$ to $z_0$. Set $D'_0:= \phi(V(\rho))$. The conjugate map 
$$
f_0 := \phi \circ Q \circ \phi^{-1}: D'_0 \to D_0(s_0/3)
$$ 
is a quadratic-like map conformally conjugate to $Q$ which extends to a smooth (in fact real-analytic) degree $2$ covering map between the boundary curves, that is, between the inner boundaries of the closed annuli $\ov{D_0}(s_0/3) \sm D'_0$ and $A_0(s_0/3,2s_0)$. Since $f$ restricts to a smooth degree $2$ covering map between the outer boundaries of the same annuli, we can interpolate between $f_0$ and $f$ to obtain a smooth degree $2$ covering map $\ov{D_0}(s_0/3) \sm D'_0 \to A_0(s_0/3,2s_0)$. This gives a smooth extension of $f$ to $D_0(s_0/3)$ which is holomorphic in $D'_0$. \vs

We can define similar degree $1$ extensions of $f$ to the four remaining components of $\{ z \in \XX : -\infty \leq G(z) < s_0/3 \}$, namely the topological disks $D_i(s_0/3)$ for $1 \leq i \leq 4$. That is, we can find a disk $D'_i$ compactly contained in $D_i(s_0/3)$ and a smooth extension of $f$ to $D_i(s_0/3)$ which maps $D'_i$ conformally onto $D_i(s_0/3)$ and the closed annulus $\ov{D_i}(s_0/3) \sm D'_i$ diffeomorphically onto $A_i(s_0/3,2s_0)$. The details are straightforward and will be left to the reader. \vs

Let $\tilde{f}: \XX \to \XX$ denote the extension of $f|_{\Om(s_0/3)}$ constructed this way. Then $\tilde{f}$ is a degree $6$ branched covering of $\XX$ which is smooth and therefore quasiregular. Define a conformal structure $\mu$ on $\XX$ by setting $\mu=\mu_0$ (the standard conformal structure) on $\Om(s_0/3)$ and extending it by pulling back under the iterates of $\tilde{f}$. In other words, for each $n \geq 1$, set $\mu = (\tilde{f}^{\circ n})^{\ast}(\mu_0)$ in $\tilde{f}^{-n}(\Om(s_0/3))$, and define $\mu=\mu_0$ on $\XX \sm \bigcup_{n \geq 0} \tilde{f}^{-n}(\Om(s_0/3))$. Evidently $\mu$ is $\tilde{f}$-invariant and has bounded dilatation since each backward orbit starting in $\Om(s_0/3)$ passes through each non-holomorphic region $\ov{D_i}(s_0/3) \sm D'_i$ of $\tilde{f}$ once (or twice if it hits the boundary). By the measurable Riemann mapping theorem, there exists a quasiconformal homeomorphism $\psi: \XX \to \Chat$ which pulls back the standard conformal structure of $\Chat$ to $\mu$. We can normalize $\psi$ such that the conformal map $\psi \circ \zeta: \{ z \in \YY : |z|>R \} \to \psi(\Om(s_0))$ is tangent to the identity at $\infty$. The conjugate map $P := \psi \circ \tilde{f} \circ \psi^{-1}$ is then a degree $6$ monic polynomial. Moreover, the conformal map $\frak{B}:=\zeta^{-1} \circ \psi^{-1}: \psi(\Om(s_0)) \to \{ z \in \YY : |z|>R \}$ conjugates $P$ to $z \mapsto z^6$ and is tangent to the identity at $\infty$, so $\frak{B}$ must be the B\"{o}ttcher coordinate for $P$. In particular, each radial line $\{ r \e^{2\pi i \theta} : r>R \}$ in $\YY$ pulls back under $\frak{B}$ to the field line $R_\theta$ for $P$. \vs

Setting $U_1:=\psi(D'_0)$ and $U_0:=\psi(D_0(s_0/3))$, we see that the restriction $P|_{U_1}: U_1 \to U_0$ is a quadratic-like map hybrid equivalent to $Q$ and therefore its filled Julia set $K$ is connected. The boundary of $U_0$ is contained in the basin of $\infty$ and so is the boundary of the topological disk $(P|_{U_1})^{-n}(U_0)$ for every $n \geq 1$. It follows that $K=\bigcap_{n \geq 0} (P|_{U_1})^{-n}(U_0)$ is a connected component of the filled Julia set $K_P$. Moreover, by the construction the four critical points $\omega_i := \psi(c_i)$ of $P$ are escaping at the common potential $s_0$, and the field lines $R_{\theta_i}$ and $ R_{\theta'_i}$ crash into $\omega_i$ for $1 \leq i \leq 4$. Thus $P$ satisfies the conditions (i)-(iii) of \S \ref{descex}. We conclude that the angles $\{ \theta_0, \ldots, \theta_4 \}$ belong to the same fiber of $\Pi: I_K \to \TT$ and the five rays $R_0, R^+_{\theta_1}, R^+_{\theta_2}, R^-_{\theta_3}, R^-_{\theta_4}$ co-land at a fixed point on $\bd K$.

\end{document}